\documentclass{article}
\usepackage{amssymb,amsmath,amsthm,framed,algorithmic,graphicx}
\usepackage{fullpage}
\usepackage{bbm}
\usepackage{authblk}
\usepackage{wrapfig}

\RequirePackage[OT1]{fontenc}
\RequirePackage[numbers]{natbib}
\usepackage{enumerate}
\usepackage[utf8]{inputenc} 
\usepackage{natbib} 
\usepackage{amsmath,amssymb,amsthm}  
\usepackage[pdftex,bookmarks,colorlinks,breaklinks]{hyperref}  
\usepackage{url}
\usepackage{multirow}
\usepackage{verbatim}
\usepackage{graphicx}
\hypersetup{linkcolor=blue,citecolor=blue,filecolor=cyan,urlcolor=blue} 

\newtheorem{theorem}{Theorem}[section]
\newtheorem{lemma}[theorem]{Lemma}

\newtheorem{definition}[theorem]{Definition}

\DeclareFontFamily{U}{mathx}{\hyphenchar\font45}
\DeclareFontShape{U}{mathx}{m}{n}{<-> mathx10}{}
\DeclareSymbolFont{mathx}{U}{mathx}{m}{n}
\DeclareMathAccent{\widebar}{0}{mathx}{"73}

\usepackage[all]{xy}

\def\eps{\varepsilon}
\def\wt{\widetilde}
\def\wh{\widehat}
\def\wb{\widebar}

\def\E{\mathbb{E}}
\def\P{\mathbb{P}}

\def\S{\mathcal{S}}
\def\M{\mathcal{M}}

\def\Cov{\textrm{Cov}}

\def\L{\mathcal{L}}
\def\R{\mathbb{R}}

\usepackage{fancyhdr}
\def\dsst{\displaystyle} 

\def\Ord{O}
\def\Xsp{\mathcal{M}}
\def\metric{\rho}
\def\osim{\mathcal{S}}
\def\lsim{\widehat{\mathcal{S}}}
\def\patht{{t_0}}
\def\MDS{\Phi}

\def\ls{ATLAS} 
\def\csp{\mathcal{A}} 
\def\icx{z} 

\title{ATLAS: A geometric approach to learning high-dimensional stochastic systems near manifolds}
\author{Miles Crosskey$^{1}$, Mauro Maggioni$^{1,2,3}$}
\affil{Department of Mathematics$^{(1)}$, Electrical and Computer Engineering$^{(2)}$ and Computer Science$^{(3)}$, Duke University, Durham, NC, 27708}

\begin{document}

\maketitle

\begin{abstract}
\noindent When simulating multiscale stochastic differential equations (SDEs) in high-dimensions, separation of timescales, stochastic noise and high-dimensionality can make simulations prohibitively expensive. The computational cost is dictated by microscale properties and interactions of many variables, while the behavior of interest often occurs at the macroscale level and at large time scales, often characterized by few important, but unknown, degrees of freedom. For many problems bridging the gap between the microscale and macroscale by direct simulation is computationally infeasible. In this work we propose a novel approach to automatically learn a reduced model with an associated fast macroscale simulator. Our unsupervised learning algorithm uses short parallelizable microscale simulations to learn provably accurate macroscale SDE models, which are continuous in space and time. The learning algorithm takes as input: the microscale simulator, a local distance function, and a homogenization spatial or temporal scale, which is the smallest time scale of interest in the reduced system. The learned macroscale model can then be used for fast computation and storage of long simulations. We prove guarantees that related the number of short paths requested from the microscale simulator to the accuracy of the learned macroscale simulator. We discuss various examples, both low- and high-dimensional, as well as results about the accuracy of the fast simulators we construct, and its dependency on the number of short paths requested from the microscale simulator.
\end{abstract}
 

\section{
Introduction
}

\indent 
High-dimensional dynamical systems arise in a wide variety of applications, from the study of macromolecules in biology to finance, to multi-agent systems, to climate modeling.
In many cases these systems are stochastic by nature, or are well-approximated by stochastic processes, for example as a consequence of slow-fast scale phenomena in the system.
Simulations typically require significant amounts of computation, for several reasons. First of all each time step of the numerical scheme is often expensive because of the large dimensionality of the space, and the large number of interactions that need to be computed. Secondly, fast timescales and/or stochasticity may force each time step to be extremely small in order to have the requested accuracy. Finally, large-time behavior of the system may be dominated by rare transition events between stable regions, requiring very long paths to understand large-time dynamics.
A large amount of research spanning multiple fields tackles the problems above.

Suppose we are given a high dimensional stochastic simulator, and we are interested in the large-time behavior of the system, but are faced with the problem of prohibitively expensive costs to run long simulations. 
What could be computable in a highly parallel fashion is an ensemble of short paths \cite{larson2002folding}. We therefore ask: what can be learned from ensembles of short paths? Several crucial problems to be addressed include: where in state space such short paths should be started? how many paths should be run locally? for how long? how does the local accuracy depend on these parameters? and once these local paths are constructed, and perhaps local simulators approximating the local dynamics are constructed, how can they be stitched together to produce a global simulation scheme? What can be guaranteed about the accuracy of such a global scheme, for large times? Some examples, among many, in this direction are Markov State Models \cite{pande2010everything,bowman2009progress,larson2002folding}, milestoning \cite{milestoning}, and several other techniques (e.g. \cite{Noe:2009:19011,Huisinga:2004:419,Deuflhard:2000:39,Shardlow00aperturbation,Kevrekidis:2003:715,vanden2003fast,kevrekidis,Kevrekidis:2003:715,Antoulas01asurvey,HuisingaSchuetteStuart2003} and references therein). Many of these methods are based on discretizations of the state space into regions, and measure transitions between such regions, others on biasing the potential to speed up exploration, yet others estimate local statistics and use them to coarse grain the system. Our method is related to some of these existing approaches, but uniquely combines them with ideas from machine learning, stochastic processes and dimension reduction, and introduces several novel key elements that combined lead to an accurate yet low-dimensional estimate of the generator of the diffusion process (rather than estimating discretized transition densities), with guarantees on the large time accuracy of the simulator we construct.  Our construction can be seen as a higher order approach compared to Markov state models, since we fit a linear reduced model, continuous in space and time, to each region, and smoothly glue these models together. Because of this, we are also able to approximate the original dynamics by a process which is continuous in time and space.

The philosophy of reducing a high-dimensional system to a low-dimensional surrogate is well-established as enabling the simulation of complex, large, high-dimensional systems, and more methods have been proposed than we can possibly discuss here. These include model reduction \cite{Moore:PCALinearSystems,Antoulas01asurvey,HuisingaSchuetteStuart2003}, homogenization of PDE's \cite{HornungHomogenization,gilbert1998comparison}, coarse-grained dynamics of high-dimensional systems \cite{kevrekidis,Kevrekidis:2003:715}, and multiscale modeling \cite{Majda:MathFrameworkStochasticClimate,Shardlow00aperturbation,Kevrekidis:2003:715,vanden2003fast}. We refer the reader to \cite{HMM} for a summary of the motivations and applications of several of these techniques, and to the references therein.

We take concepts from {\em{manifold learning}} \cite{GMRA, brand2002charting, LTSA, Saul_tgfl} in order to learn an underlying low-dimensional manifold around which most trajectories concentrate with high probability. We approximate the macroscale manifold with linear low-dimensional subspaces locally, which we call charts. These charts enable us to learn local properties of the system in low dimensional Euclidean space. Geometric Multi-Resolution Analysis (GMRA), introduced in \cite{GMRA} uses this concept to approximate high dimensional distributions on manifolds.
These techniques perform the {\em{model reduction}} step, mapping the high-dimensional system from $\mathbb{R}^D$ down to $d$-dimensions, yielding a small set of coordinates describing the effective small number of degrees of freedom of the system.
 
We combine this dimension reduction step with {\em{homogenization theory}} \cite{multi-meth, gilbert1998comparison,vanden2003fast} and learn a local low-dimensional approximation of the system at a certain time-(or space-)scale $t_0$. We note that this approximation is not necessarily accurate at timescales shorter than $t_0$. Short-time events may be complex, high-dimensional, highly stochastic or deterministic but chaotic, and we do not seek a simulator reproducing these fine-scale behaviors. We are interested though in the ``net effect'' and implications of these phenomena at timescale $t_0$ and larger. Locally we fit a simple reduced system, e.g. a constant coefficient SDE to each chart. If the macroscale simulator is well approximated by a smooth SDE, then constant coefficient SDEs will approximate the system well locally. This smooth SDE is approximating the original simulator above a certain timescale.

In order to obtain a global simulator, we add a last crucial ingredient: we construct an approximation to the transition maps between charts, generating a numerical approximation to a manifold {\em{atlas}}. Learning such transition maps between charts is necessary to allow us to smoothly combine simulators on distinct charts into one global simulator on the atlas. The simulator we construct we then call the \ls, and we show that under appropriate conditions it captures long term statistics of the dynamics of the original system.

Finally, we note that accurate samples from the stationary distribution is a valuable tool in studying many dynamical systems. Reduced large-time models for complex high dimensional dynamical systems is sometimes obtained using so called ``reaction coordinates'', a set of global low dimensional coordinates describing the important states of the system (see e.g. \cite{Boaz:DiffusionMapsSpectralClustering,Qi:2010:6979,Cho:2006:586,Hanggi:1990:251,Das:2006:9885,Szabo:1980:4350,Clementi:LowDimensionaFreeEnergyLandscapesProteinFolding,Peters:2006:054108,Socci:1996:5860,Berezhkovskii:2005:014503,milestoning} and references therein). Several of the techniques (but not all, e.g. notably the string method of \cite{Vanden-Eijnden:2009:194103,E:2007:164103,Ren:2005:134109}) need samples from long simulations of the system and/or the stationary distribution. For example diffusion maps and their extensions to the study of molecular dynamics data (see \cite{DiffusionMaps,Coifman:2008:842,RZMC:ReactionCoordinatesLocalScaling}) require  many samples from the stationary distribution to guarantee accuracy. These reaction coordinates allow further analysis of dynamical systems by easily identifying stable states, and, most importantly, transitions paths between such states and reaction coordinates parametrizing such transitions. In fact, part of the motivation for this work was observing that the slowest part of running diffusion maps on such complicated high dimensional systems was obtaining the samples from the stationary distribution.

The paper is organized as follows: in section \ref{s:main} we describe at high level our construction, algorithm, and informally state the main result on the accuracy of the \ls\ for large times; then we illustrate the algorithm on simple examples. In section \ref{sec_alg} we discuss the algorithm in detail. In section \ref{sec_err} we state and prove our main result. In section \ref{sec_exs} we present a wide range of examples. We conclude with a  discussion in section \ref{sec_disc}.


\section{
Construction and Main Results
}
\label{s:main}

The geometric assumption underlying our construction is that the dynamics of the Markovian stochastic dynamical system of interest $(Y_t)_{t\ge0}$ in $\R^D$  is concentrated on or near an intrinsically low-dimensional manifold $\Xsp$ of dimension $d$, with $d\ll D$. We refer to $\Xsp$ as the {\em{effective state space}} of the system, as opposed to the full state space $\R^D$.
This type of model may be appropriate in a wide variety of situations:
\begin{itemize}
\item[(i)] the system has $d$ degrees of freedom, and is therefore constrained (under suitable smoothness assumptions) to a $d$-dimensional manifold $\Xsp$;
\item[(ii)] as in (i), but possibly with small deterministic or stochastic violations of those constraints (perhaps at a fast scale), but such that the trajectories stay close to $\Xsp$ at all times.
\end{itemize}
In these cases it makes sense to approximate $\Xsp$ by an efficient low-dimensional approximation $\csp$, such as a union of $d$-dimensional linear affine sets (charts) \cite{CM:MGM2,MMS:NoisyDictionaryLearning}, and the dynamics of $Y_t$ by surrogate dynamics on the atlas $\csp$. Learning dynamics on $\csp$ reduces the problem from learning a high-dimensional global simulator to a low-dimensional local simulator, together with appropriate transitions between local simulators in different charts. We also gain computational efficiency by using the structure $\csp$: long paths may be more quickly stored and simulated in lower dimensions. We will make assumptions about the geometry of $\Xsp$ and on the underlying macroscale simulator, in order to prove large time accuracy results for the \ls. We expect this approach to be valid much more generally, and this hope is supported by our numerical experiments.

While in this paper we consider a special class of stochastic dynamical systems, those well-approximated by low-dimensional SDEs such as those leading to advection-diffusion equations along a manifold, the framework can be significantly extended, as we briefly discuss later in section \ref{sec_disc}, and this will be subject of future work.

\subsection{Main Ideas and Steps}
Our construction takes as input:
\begin{itemize}
\item a dense enough sample of $\Xsp$, or a way of sampling $\Xsp$ in a rather uniform way (both of these statements will be quantified later, see section \ref{e:netconstruction});
\item  a simulator $\osim$ for the stochastic dynamical system $(Y_t)_{t\ge0}$, which may be started upon request at any specified initial condition and run for a specified amount of time;
\item a distance function $\rho$ to be used to for measuring distances between pairs of data points returned by the simulator;
\item a spatial homogenization parameter $\delta$;  
\item the dimension $d$ of the effective state space, and a confidence parameter $\tau$.
\end{itemize}
We note here that the homogenization scale $\delta$ can also be given as a temporal scale $\patht$, and the two are related by natural scalings in the underlying dynamical system. Given a time $\patht$, running paths of length $\patht$ and examining the average distance traveled by such paths reveals a corresponding natural spatial scale $\delta(\patht)$ (in fact this is done in example \ref{ex_fcn}). Inversely, given $\delta$, one could choose $\patht$ so that the average distance traveled by paths is approximiately $\delta$. We later discuss the accuracy of the simulator, which is a function of the parameter $\delta$. 

We remark that while $d$ is here considered as a parameter for the algorithm, in fact there is a lot of work on estimating the intrinsic dimension of high-dimensional data sets that would be applicable here. In particular, the Multiscale SVD techniques of \cite{LMR:MGM1,MM:MultiscaleDimensionalityEstimationAAAI} have strong guarantees, are robust with respect to noise, and are computationally efficient. See also \cite{MMS:NoisyDictionaryLearning} for finite sample guarantees on the approximation of manifolds by local affine approximate tangent spaces. We will mention again the problem of estimating $d$ when we construct the local charts in section \ref{sec_LMDS}.
 
The confidence parameter $\tau$ sets the probability of success of the algorithm (at least $1-2e^{-\tau^2}$), and is related to the number of sample paths one must use to approximate the local parameters of the simulator. 


Our construction then proceeds in a few steps:
\begin{itemize}
\item[(i)] {\bf net construction}: find a well-distributed set of points $\Gamma = \{y_k\}$ in $\Xsp$, having a granularity parameter $\delta$, the finest resolution of interest; 
\item[(ii)] {\bf learning the atlas}: learn local charts $C_k$ near $y_k$ obtained by mapping  $\Xsp$ locally to $d$-dimensional Euclidean domains, and learn transition maps for changing coordinates between a nearby pair of charts;
\item[(iii)] {\bf learning the simulator}: run $p=p(\delta,\tau)$ paths for time $\patht=\patht(\delta)$ from each $y_k$ and map them to the coordinate chart $C_k$. Use these low dimensional representations to estimate a simple simulator on each chart $C_k$.
\end{itemize}

\subsubsection{Net construction}
\label{e:netconstruction}
The first stage is to produce a $\delta$-net $\Gamma = \{y_k\}$, which is a set of points $\{y_k\}$ in $\Xsp$ such that no two points are closer than $\delta$, and every point in $\Xsp$ is at least $\delta$ close to some $y_k$. With abuse of notation, the range of $k$ will also be denoted by $\Gamma$, so we may also write the net as $\{y_k\}_{k\in\Gamma}$. 
We say that two points $y_k$ and $y_j$ are connected, or $k \sim j$, if $y_k$ and $y_j$ are within $2\delta$. 
We shall construct a reduced simulator at each node and the neighboring connections will determine the switching between simulators at adjacent nodes.
See section \ref{sec_dnet} for the details.

In real world examples, the space $\Xsp$ may be unknown. 
In this case we assume that we have the ability to sample from $\Xsp$, and we generate enough samples $\{x_i\} \subset \Xsp$ such that balls of radius $r\ll\delta$ cover $\Xsp$. 
This first round of sampling should ideally have the following properties: it can be generated by a fast exploration method (e.g. see the recent work \cite{zheng2013rapid} for molecular dynamics, and references therein - this problem by itself is  subject of much research); its samples do not require a significant number of calls to the simulator, or long runs of the simulators; different points may be sampled independently so that the process may be parallelized.
We can then downsample these $\{x_i\}$ to obtain the desired net $\Gamma$.
These considerations depend on the sampling measure. If this is simply the canonical volume measure on $\mathcal{M}$, it is easy to see that $O(\delta^{-d}\log(1/\delta))$ samples suffice, with high-probability, to obtain the desired $\delta$-net. Similarly for measures that have uniformly lower and upper bounded density with respect to the volume measure. We remark that this sampling is therefore independent of the dynamics of interest, has in general nothing to do with the stationary measure of the process we will seek to approximate: it may therefore be significantly easier to construct this sampling mechanism rather than one adapted to the dynamics.

Finally, it is important to remark that the algorithm we present is easily modified to run in exploratory mode: the fast simulator runs on the currently explored region of space, and whenever configurations outside the explored region of space are encountered (an event that is quickly detectable using the data structures we employ), new charts and local simulators may be added on-the-fly with minimal computational work. This is subject of current work and will be detailed in a forthcoming publication.

\subsubsection{Learning charts and corresponding maps}

The first step in learning the charts is to generate a set of landmarks $A_k\subset\Xsp$ for each $y_k$ in the net $\Gamma\subset\Xsp$: a set of points well-spread in $d$ directions on $\Xsp$, at distance about $\delta$ from $y_k$. In our setting where $\M$ is $d$-dimensional, we can sample $p\geq d$ paths from the simulator of $Y_t$, starting at $y_k$ and run until time $\patht=\patht(\delta)$. As long as the diffusion $Y_t$ is nondegenerate on the tangent plane, the projections of the end points of these paths will span the tangent space to $\Xsp$ at $y_k$.

Next we learn a mapping $\MDS_k$ from a neighborhood of $y_k$ to a coordinate chart $C_k\subseteq\mathbb{R}^d$ for each $y_k$. In order that neighboring coordinate charts overlap on a region of size $\delta$,  we learn $\MDS_k$ from $L_k = \bigcup_{j\sim k} A_j$, the union of neighboring landmarks. In this way $\MDS_k$ will be defined as a map from $B_{2\delta}(y_k)\subseteq\Xsp$ to $C_k\subseteq\mathbb{R}^d$. The overlap between neighboring charts will allow us to smoothly transition the simulator from one chart to the next. Each mapping $\MDS_k$ is constructed using Landmark Multi-Dimensional Scaling (LMDS) on $L_k$, minimizing distortion of pairwise distances between the landmarks $L_k$ (see section \ref{sec_LMDS}).

For any $k\sim j$, $L_k$ and $L_j$ have the landmarks $A_k \cup A_j$ in common; thus the charts $C_k$ and $C_j$ overlap on $A_k \cup A_j$. These landmarks span the local charts, and are the points used to learn the transition maps between neighboring charts. The affine transition map $S_{k,j}$ is chosen as the ``best'' linear mapping from $\MDS_k(A_k\cup A_j)$ to $\MDS_j(A_k\cup A_j)$ described in section \ref{sec_pinv}. Figure \ref{fig_charts} shows a cartoon version of the points used to learn the atlas.

\begin{figure}[ht]
\begin{center}
\includegraphics[width=4in]{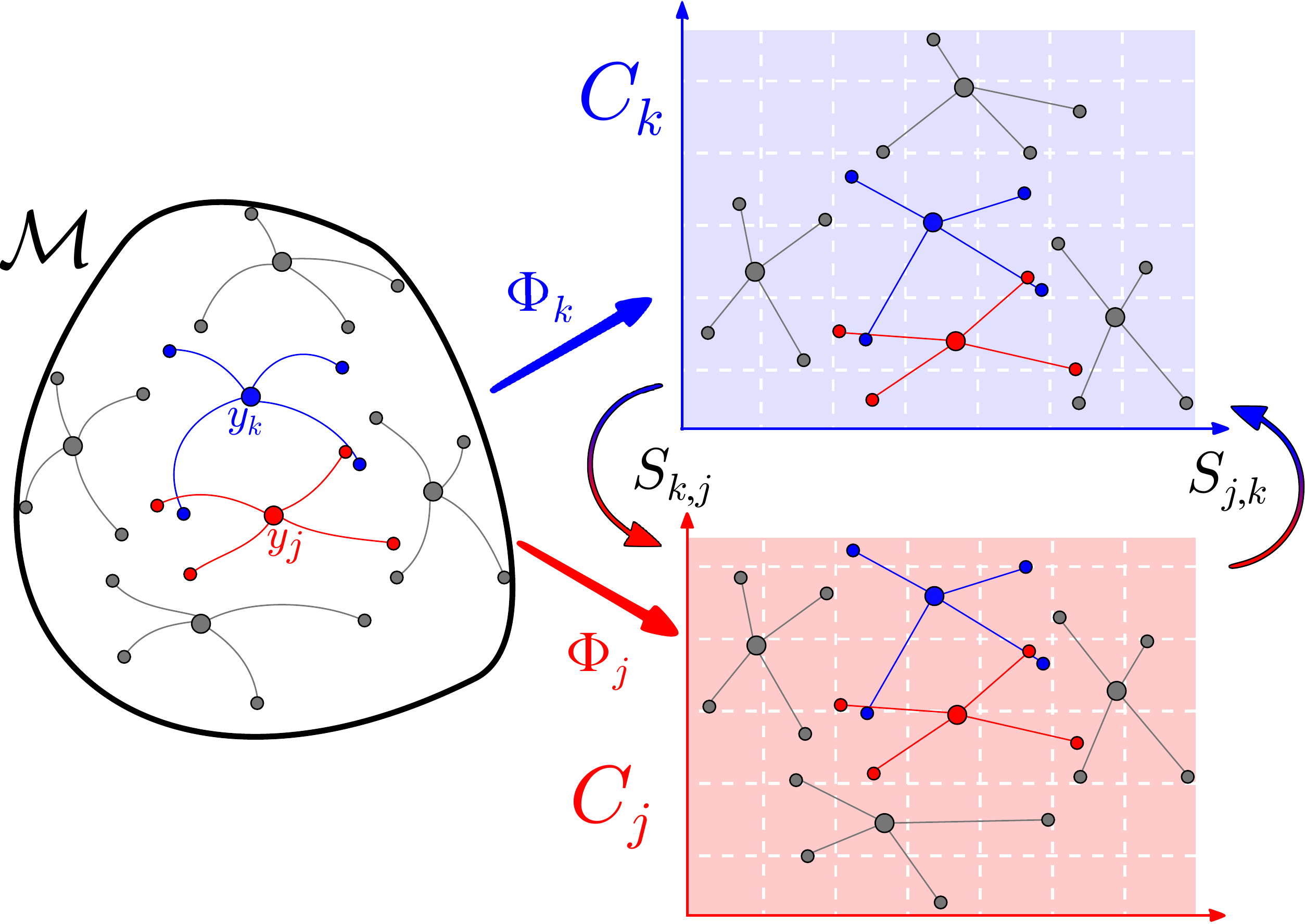}
\caption[This figure depicts the landmarks used to learn the overlapping charts.]{This figure depicts $m=4$ samples per net point being used to learn the charts. Large circles represent net points (or projections of net points) and small circles represent path end points (or projections of the path endpoints). The LMDS mappings $\MDS_k,\MDS_j$ use all the circles to learn the chart, while the transition maps $S_{k,j},S_{j,k}$ use only the colored circles.}\label{fig_charts}
\end{center}
\end{figure}  

\subsubsection{Learning the Simulator}
Once the charts are known, we learn an approximation to the simulator on each chart. For each $y_k\in\Gamma$, we run $p=p(\delta,\tau)$ paths via the original simulator $\osim$ up to time $\patht=\patht(\delta)$ starting from $y_k$. Next we project the samples to $C_k$ in order to estimate local simulation parameters.
In this paper we use constant coefficient SDEs to model the simulator on each chart:
\begin{align}
 d\wb X_t = \wb b_k dt + \wb \sigma_k dB_t\,,
\end{align}
for some $\wb b_k\in\R^d$ and some positive definite $\wb \sigma_k\in\R^{d\times d}$.
The solution to this constant coefficient SDE is a Gaussian with mean $\wb b_k\,\patht$ and covariance $\wb \sigma_k \wb \sigma_k ^T \, \patht$. 
Therefore, we estimate $\wb b_k$ and $\wb \sigma_k$ by imposing that these statistics match the sample mean and sample covariance of the endpoints of the $p$ paths run with $\mathcal{S}$. Finite sample bounds for these empirical values determine how large $p$ should be in order to achieve a desired accuracy ($\delta$) with the requested confidence ($\tau$).

This step is trivially parallelizable, both in $k$ (the chart in which the learning takes place) and within each chart (each of the $p$ paths may be run independently). At the end of this process we have obtained the family of parameters $(\wb b_k,\wb \sigma_k)_{k\in\Gamma}$ for a family of simulators $(\lsim_k)_{k\in\Gamma}$. 

The local simulators $(\lsim_k)_{k\in\Gamma}$ are extended to a global simulator $\lsim$ on $\csp$ using the transition maps between charts. This is done by alternating between steps from $(\lsim_k)_{k\in\Gamma}$, and transition map operations -- this is somewhat delicate, and detailed in section \ref{sec_lsim}. 

The choice of the local SDE's and the estimator of its parameters is one of the simplest possible, however we will see a collection of these simple simulators combine to reproduce much more complicated systems. Naturally the ideas may be extended to richer families of local SDE's, for which appropriate estimators based on the statistics of local trajectories may be constructed: this is subject of current research.

\subsection{Theoretical guarantees}
We present here a simplified version of the main result, Theorem \ref{thm_main}. 
Suppose the given stochastic dynamical system $Y_t$ is driven by an SDE on a $d$-dimensional manifold $\M$ with volume measure $\mu$ of the form
\begin{align}
 dY_t = b(Y_t)dt + \sigma(Y_t)dB_t \label{eqn_ygeneral}
\end{align}
with $b, \sigma$ Lipschitz functions, and $\sigma$ uniformly nondegenerate on the tangent bundle $T(\M)$. Let $q$ be the density of the stationary distribution of $Y_t$ on $\M$, and $\hat{q}$ be the density of a probability measure on $\csp$ defined later in equation \eqref{eqn_qhatdef} and computed by running the \ls\ for large time. Let $G$ be the inverse mapping from $\csp$ to $\M$ defined in section \ref{sec_prelim}. 
If the number of sample paths collected at each of $\Ord(\delta^{-d})$ starting points is at least $\Ord((d+\tau^2)/\delta^4)$, then with probability at least $1-2e^{-\tau^2}$
\begin{align}
 ||q - G_*\wh q||_{L^1(\M)} < c \delta \ln(1/\delta)
 \label{e:mainsimple}
\end{align}
for some constant $c$ depending on geometric properties of $\M$, the Lipschitz constants of the drift $b$ and diffusion $\sigma$, and the lower bound on singular values of $\sigma$ along the tangent plane.
Here $G_*\wh q$ is the pushforward of the measure $\wh q$ from $\csp$ to $\M$ as defined in equation \ref{eqn_gstar}.
\vspace{0.1in}

One can think of $Y_t$ as the underlying homogenized system which we are trying to learn. This result guarantees that the ATLAS process learned only from short paths of $Y_t$, actually behaves closely to $Y_t$ for large times.
Note that if the microscale simulator does not satisfy these conditions, it is possible the system is well-approximated by a macroscale simulator of the form \eqref{eqn_ygeneral} satisfying the conditions of the theorem on the timescale $\patht$ (the time sample paths are run) and above; in this case the error in approximating the original simulator by $Y_t$ is simply added to the right hand side of \eqref{e:mainsimple}.

Our results could be re-interpreted in the context of adaptive MCMC as follows. Assume we wish to sample from a probability distribution $q$ on a $d$-dimensional manifold $\Xsp$, that is the stationary distribution of a process $Y_t$ as above (with the assumptions stated in Theorem \ref{thm_main}), then if we have access to a local simulator of $Y_t$, we can construct an efficient sampler for an approximation to $q$. Contrary to Riemann manifold Hamiltonian Monte Carlo \cite{GirolamiCalderheadRiemannManifoldLangevin} we do not need to know the parameter space of the underlying statistical model, which would correspond to a parametrization of $\Xsp$, but we learn it through many short simulations of the dynamics, nor do we need sophisticated numerical integrators. These ideas are being developed further in a forthcoming publication.

\subsection{Examples}
Here we present some examples showcasing the usefulness of the \ls. The examples shown here have Brownian motion in a potential well, although the theorem guarantees accuracy for any simulator of the form \eqref{eqn_ygeneral}. Further examples will be discussed in section \ref{sec_exs}.

\subsubsection{Brownian Motion on a Manifold}

Given a $d$-dimensional smooth compact manifold $\mathcal{M}$, one may construct the potential
\begin{equation*}
U_{\varepsilon}(x) = \frac 1{\varepsilon}\mathrm{dist}(x,\mathcal M)^2
\end{equation*}
and consider the It\^o diffusion in $\mathbb{R}^D$ given by
\begin{equation}
 dY_t = -\nabla U_{\varepsilon} dt + dB_t \label{eqn_yeps}
\end{equation}

If one simulates \eqref{eqn_yeps} numerically for small $\varepsilon$, the timesteps must be at least as small as $\Ord(\varepsilon)$ (using Euler-Maruyama, since we only need the weak convergence of the scheme). We can view this thin potential around the manifold as our microscale interactions which forces our choice of timestep. What we are interested in is the macroscale behavior determined by the manifold $\M$.

For $\varepsilon\rightarrow 0$ this process converges to the canonical Brownian motion on the manifold $\mathcal M$ \cite{Emery:StochasticCalculusManifolds}. For $\varepsilon$ sufficiently small (compared to the curvature of $\M$) $Y_t$ is well-approximated locally by (the low-dimensional) Brownian motion on $\mathcal M$, and the stationary distribution of $Y_t$ is close to that of Brownian motion on $\mathcal M$. Our results apply to this setting, yielding an efficient $d$-dimensional simulator for $Y_t$, without a priori knowledge of $\mathcal M$.

\subsubsection{One Dimensional Example}\label{ex_onedim}
In this numerical example, we start with Brownian motion in a simple double well, and add a high frequency term to the potential to get $U(x)$:
\begin{align}
 U(x) =  16x^2(x-1)^2 + \frac16\cos(A\pi x)
\end{align}
with $A=100$.
The high frequency term gives the Lipschitz constant $L\sim 10^2$, forcing the forward Euler scheme to use time steps on the order of $L^{-2}\sim10^{-4}$ in order to just achieve stability (using a higher order method would not solve these problems as higher derivatives of $U$ will be even larger; implicit schemes would allow for a larger step size, at the expense of larger computational complexity for each step). The first term in $U$ is much smoother, and homogenization theory (e.g. \cite{pavliotis2007parameter} and references therein) suggests that the system is well-approximated by a smoother system with Lipschitz constant $l\sim 10$ or less (determined by the quartic term in $U$), at least for $A$ going to $\infty$ (and suitable renormalization). Our construction yields an approximation of the given system, for $A$ fixed, at a spatial scale larger than $\delta$ (corresponding to a timescale larger than $\delta^2$). This is the target system we wish to approximate. Running \ls\ with $\delta = 0.1\sim l^{-1}$, we obtain a smoothed version of the potential homogenizing the high frequency term (see Figure \ref{fig_wells}).  

\begin{figure}[ht]
\begin{center}
\includegraphics[width=4.5in]{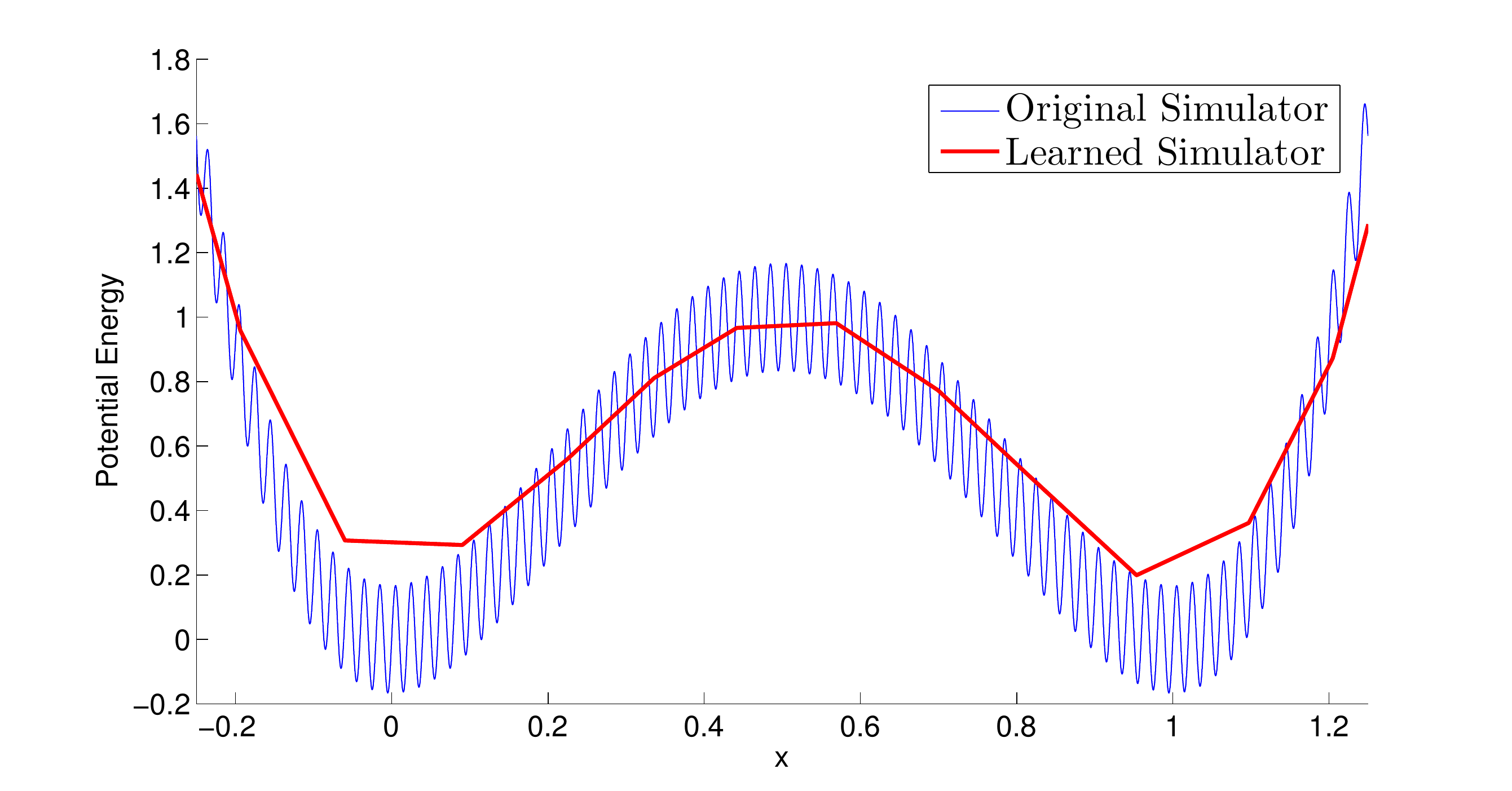}
\caption[Comparing stiff potential $U$, and effective potential $\wh U$ for the \ls.]{Original stiff potential $U$ (shown in blue), and effective potential $\wh U$ (shown in red) for the \ls, learned from short trajectories of the original simulator.}\label{fig_wells}
\end{center}
\end{figure}
The \ls\ takes time steps which are over $10^2$ times larger than the original system, and thus long paths can be simulated $10^2$ times faster, as if the system had a potential smooth at scale $O(1)$. Note that increasing the frequency of the oscillating term (thereby increasing $L$) does not affect the speed of the \ls, but only the speed of constructing the \ls\ (since $\S$ would have to use smaller time steps). This means that our algorithm allows for a decoupling of the microscale complexity (which is handled in parallel) from the macroscale complexity. A histogram of the approximate stationary distributions are shown in Figure \ref{fig_bar} comparing the \ls\ and the original system. See section \ref{ex_dwwr} for more details about the experiment, and Figures \ref{f:Verrorhf}, \ref{f:simerrorhf} for the errors in true effective potential vs. estimated effective potential, and the error in approximating the time evolution of the original system by the \ls\ for a multiscale choice of times, as well as the different in transition rates between the wells.

\begin{figure}[ht]
\begin{center}
\includegraphics[width=6in]{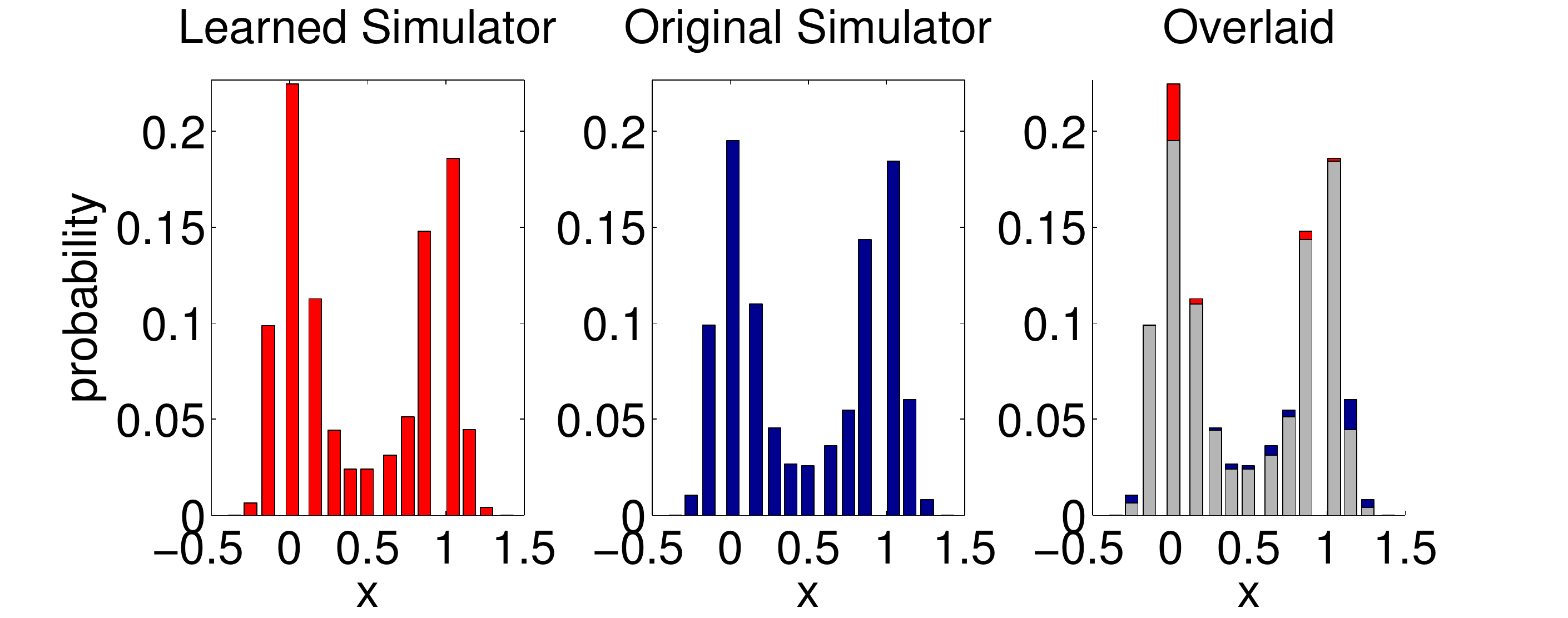}
\caption[Comparing original stationary distribution with \ls\ stationary distribution.]{Comparison between the (approximate) stationary distributions between the original simulator and the \ls\ with $10^5$ samples for example \ref{ex_onedim} (for this number of samples the error is expected to be $O(\sqrt{10^{-5}})\ll\delta$).}\label{fig_bar}
\end{center}
\end{figure}


\section{
ATLAS Algorithm
} \label{sec_alg}
In this section we present the algorithm in detail, since the main result will state properties of the output of the algorithm itself. Pseudo-code is presented in figure \ref{f:algomain}.
We start by discussing the algorithms used during the learning phase. We discuss afterwards the full details of the simulator learning phase and the simulation phase. 
The algorithm has access to a simulator $\mathcal{S}$ of the original system, and takes as input several parameters:
\begin{itemize}
 \item[$\cdot$] $\delta$: We will assume this parameter is given to us and represents the homogenization scale, and it will be related to the desired accuracy of the simulator via \eqref{e:mainbound} in Theorem \ref{thm_main}.
 \item[$\cdot$] $\{x_j\}$: A set of points on $\Xsp$ that is dense enough that a $\delta$-net for $\Xsp$ may be extracted from it. Alternatively, a way of sampling points on $\Xsp$ with respect to a measure comparable to volume measure on $\Xsp$.
 \item[$\cdot$] $\patht$: This represents the time short paths will be simulated for. In most examples in this paper, we choose $\patht = \delta^2$. In practice, one should choose $\patht$ so that sample paths at time $\patht$ are an expected distance $\delta$ from the starting location.
 \item[$\cdot$] $m$: The number of landmarks for each net point for learning the chart and transition maps. $m$ should be at least $d$, and we choose it of order $d$ to minimize sampling and computational complexity.
 \item[$\cdot$] $p$: The number of sample paths computed for each point in the net. $p$ should be $\Ord(\delta^{-4})$.
 \item[$\cdot$] $\Delta t$: time step of the \ls. It should be $\Ord(\delta/\ln(1/\delta))$. In the examples we used $\delta/5$. 
\end{itemize}
These choices of parameters are informed by the results and proofs in section \ref{sec_err}. We will see that for these choices of parameters, the \ls\ produces paths whose stationary distribution has error $\Ord(\delta \ln(1/\delta))$.

\subsection{Net construction} \label{sec_dnet}
In a metric space $(\Xsp,\rho)$ we define a $\delta$-net of points as follows:
\begin{definition}[$\delta$-net]
\label{def:net}
A $\delta$-net for a metric space $(\Xsp,\metric)$ is a set of points $\{y_k\}_{k\in\Gamma}$ such that $\forall k_1,k_2\in\Gamma \ \rho(y_{k_1},y_{k_2}) \geq \delta$, and at the same time  $\forall x\in\Xsp \, \exists k\in\Gamma \, \rho(x,y_{k}) \leq \delta$.
\end{definition}
In view of our purposes, the first property ensures that the net points are not too close together: this is essential so we do not waste time exploring regions of the space that we have already explored, do not construct many more local simulators than needed, and do not switch between charts significantly more often than necessary. The second property ensures that $\{B_\delta(y_k)\}_{k\in\Gamma}$ is a cover for $\Xsp$, guaranteeing that we explore the whole space $\Xsp$. We will connect nearby net points: if $d(y_{k_1},y_{k_2})\le 2\delta$ we say that $y_{k_1}$ and $y_{k_2}$ are neighbors and we write $k_1 \sim k_2$.

\subsubsection{Computational cost}
Algorithms for efficiently constructing $\delta$-nets in metric spaces satisfying a doubling condition exist and are non-trivial, for example by constructing a data structure called cover trees (see \cite{beygelzimer2006cover}), which run in $\Ord(C^d  n \log(n) D)$ time, where $d$ is the intrinsic dimension (e.g. doubling dimension), $C$ a constant that depends on the curvature of $\Xsp$, $n$ is the number of points in $\Xsp$, and $D$ is the cost of computing the distance between a pair of points in $\Xsp$. These data structures are especially useful for both finding near points to any given point, and for constructing nets of points at multiple resolutions, and they may be run in an online fashion.

A slower, simpler algorithm for constructing a net is to add points one a time if they are farther than $\delta$ from any point already in the net; finish when no more points can be added. For simplicity, this is the algorithm we have used in examples presented in this paper. 

If a way of sampling points from $\Xsp$ with respect to a measure $\mu$ with uniformly lower and upper bounded density with respect to the volume measure on $\Xsp$, then it is easy to see that $O(\delta^{-d}\log(1/\delta))$ samples suffice, with high-probability, to obtain enough samples from which a $\delta$-net may be extracted \cite{MMS:NoisyDictionaryLearning}.

\subsection{Dimension Reduction: Landmark Multidimensional Scaling (LMDS)} \label{sec_LMDS}
LMDS takes as input a set of landmarks $L\subset \Xsp$ and a set of other points $Z\subset \Xsp$, and constructs a map $\MDS:L\cup Z\rightarrow\R^d$ embedding $L,Z$ into $\R^d$. LMDS computes all pairwise distances between points in $L$, and returns low dimensional coordinates which minimize the distortion given by
\begin{align}
 \sum_{l_i,l_j\in L} \left(\metric(l_i,l_j)^2 - ||\MDS(l_i) - \MDS(l_j)||^2_{\R^d}\right)^2
\end{align}
over all possible mappings $\MDS$.  This is Multiscale Dimensional Scaling (MDS, \cite{Borg:MDS}).
At this point $\Phi$ is only defined on $L$: to extend it to $L\cup Z$, LMDS proceeds by computing the distances between each point in $L$ and each point in $Z$,  and for each point $z \in Z$ assigns coordinates $\MDS(z)$ which minimize
\begin{align}
 \sum_{l_i\in L} \left(\metric(l_i,z)^2 - ||\MDS(l_i) - \MDS(z)||^2_{\R^d}\right)^2
\end{align}
over all possible choices $d$ dimensional vectors $\MDS(z)$. 
For a full description of the algorithm, see \cite{de2004sparse}.  If the distance $\metric$ is Euclidean, the algorithm reduces to principal component analysis (PCA). 

If the dimension $d$ is unknown, one could learn $d$ at this stage from observing the eigenvalues of the squared distance matrix obtained during MDS. Eigenvalues which are of order $\delta^2$ correspond to directions along the manifold, and eigenvalues which are of order $\delta^4$ or lower correspond to curvature (or noise). Thus, one could learn $d$ by choosing a cutoff threshold depending upon $\delta$ (in fact this is done in example \ref{ex_fcn}). For an extensive analysis on how to detect intrinsic dimensionality of data sets see \cite{LMR:MGM1,CM:MGM2}, and its use in the context of high-dimensional stochastic systems in view of global nonlinear dimension reduction and reaction coordinates see \cite{RZMC:ReactionCoordinatesLocalScaling,ZRMC:PolymerReversal}.

\subsubsection{Computational cost}
The computational cost of this algorithm is $\Ord((|L|^2+|L|\cdot|Z|)D)$, where $D$ is the cost of evaluating $\metric$ at a pair of points. This cost comes directly from the number of distances computed. The point of LMDS compared to MDS (the case $|Z|=0$) is that in the case of interest where $|L|\ll|Z|$, the cost is linear in $|L\cup Z|$ for LMDS instead of quadratic as in MDS \cite{de2004sparse}. The cost of computing $d$ eigenvectors on a matrix of size $|L|\times|L|$ is $\Ord(d|L|^2)$, which is negligible compared to $|L|^2D$.

\subsection{Least-squares switching maps} \label{sec_pinv}
We will use the pseudoinverse (see \cite{penrose1956best}) to solve a least squares problem of finding the best linear transition map. If $X$ and $Y$ are each $2m\times d$ matrices (with chart images as rows, and with mean zero columns), then the matrix $T = X^\dagger Y$ minimizes $||XT - Y||_2$ over all $d\times d$ matrices. 

Fix $y_k\in\Gamma$. In the construction algorithm that follows, for each connection $j\sim k$, we take a set of common landmarks $L_{k,j} = A_k \cup A_j$ and let $X = \MDS_k(L_{k,j})$ and $Y = \MDS_j(L_{k,j})$, with the $d$-dimensional vectors involved being the rows of these matrices. Since the mean of the rows of $X$ (resp. $Y$) is not zero, we subtract from each row the mean $\mu_{k,j}$ (resp. $\mu_{j,k}$) of the rows of $X$ (resp. $Y$). The charts $C_k$ and $C_j$ represent overlapping areas on $\Xsp$, and so there will exist a matrix $T_{k,j}=X^\dagger Y$ which has small error. See again figure \ref{fig_charts} for a detailed picture.

To simplify notation, we will combine the mean shifting and matrix multiplication into a single operator $S_{k,j}$
\begin{align}
S_{k,j}(x) = (x - \mu_{k,j})T_{k,j} + \mu_{j,k} \label{eqn_switchingmap}
\end{align}

To decide when to apply the switching maps, we will need to know when the simulator is in the region between two charts. To do this, we will calculate the distances to the \emph{chart centers}, which we call $c_{k,j} = \Phi_k(y_j)$, i.e. the images of the net points via the low dimensional mappings.

\subsubsection{Computational cost}
The cost of computing the pseudoinverse is $\Ord(2md^2)$ since we must compute the singular value decomposition of $X$, and $2m > d$. The cost of applying the switching map is $\Ord(d^2)$.

\begin{figure}[tb!]
\centering
    \textbf{Learning Phase}\par
\begin{framed}
\begin{algorithmic}
 \item[] $\lsim = $ {\bf construction\_phase}($\{x_j\}, \metric,\osim$)
 \item[] \hspace{.1in} $\{y_k\}_{k \in \Gamma} \leftarrow \delta-$net($\{x_j\}$)
 \item[] \hspace{.1in} {\bf for} $k \in \Gamma$
 \item[] \hspace{.1in} \hspace{.3in} $\%$ \emph{create} $m+1$ \emph{landmarks for LMDS} around $y_k$
 \item[] \hspace{.1in} \hspace{.3in} $\{a_{k,l}\}_{l=1..m} = \osim(y_k,m,\patht)$
 \item[] \hspace{.1in} \hspace{.3in} $A_k = y_k \cup \{a_{k,l}\}$
 \item[] \hspace{.1in} {\bf end}
 \item[] \hspace{.1in} {\bf for} $k \in \Gamma$
 \item[] \hspace{.1in} \hspace{.3in} $\%$ \emph{simulate} $p$ \emph{paths for estimating drift and diffusion coefficients around $y_k$}
 \item[] \hspace{.1in} \hspace{.3in} $\{x_{k,l}\}_{l=1..p} = \osim(y_k,p,\patht)$
 \item[] \hspace{.1in} \hspace{.3in} $L_k = \bigcup_{i\sim k} A_i$
 \item[] \hspace{.1in} \hspace{.3in} $[L_k',\{x_{k,l}'\}_{l=1..p}] = $ LMDS($L_k,\{x_{k,l}\}_{l=1..p},\metric$)
 \item[] \hspace{.1in} \hspace{.3in} $\{\lsim.c_{k,j}\} \leftarrow \bigcup_{j \sim k} y_j$ in $L_k'$
 \item[] \hspace{.1in} \hspace{.3in} shift coordinates so $c_{k,k}=0$
 \item[] \hspace{.1in} \hspace{.3in} $\lsim.\wb{b}_k \leftarrow  \sum_l x_{k,l}'/p\patht $
 \item[] \hspace{.1in} \hspace{.3in} $\lsim.\wb{\sigma}_k \leftarrow $ (Cov($\{x_{k,l}'\})/\patht)^{1/2}$
 \item[] \hspace{.1in} \hspace{.3in} $\%$ \emph{compute switching maps}
 \item[] \hspace{.1in} \hspace{.3in} {\bf for} $j \sim k$, $j<k$
 \item[] \hspace{.1in} \hspace{.6in} $L_{k,j} = A_k \bigcup A_j$
 \item[] \hspace{.1in} \hspace{.6in} $L_{k,j}' = L_{k,j} $ in $L_k'$ coordinates
 \item[] \hspace{.1in} \hspace{.6in} $L_{j,k}' = L_{j,k} $ in $L_j'$ coordinates
 \item[] \hspace{.1in} \hspace{.6in} $\lsim.\mu_{k,j} \leftarrow \E [L_{k,j}']$
 \item[] \hspace{.1in} \hspace{.6in} $\lsim.\mu_{j,k} \leftarrow \E [L_{j,k}']$
 \item[] \hspace{.1in} \hspace{.6in} $\lsim.T_{k,j} \leftarrow (L_{k,j}' - \mu_{k,j})^\dagger (L_{j,k}' - \mu_{j,k})$
 \item[] \hspace{.1in} \hspace{.3in} {\bf end}
 \item[] \hspace{.1in} {\bf end}
\end{algorithmic}
\end{framed}
\caption[Pseudocode for constructing the \ls.]{Main algorithm for constructing the \ls: it constructs the $\delta$-net, computes chart embeddings, learns chart simulators from short sample paths, and transition maps.}
\label{f:algomain}
\end{figure}

\subsection{Learning Phase} \label{learning_phase}
The first part of the \ls\ algorithm is the learning phase, in which we use the sample paths to learn local chart coordinates, local simulators and transition maps. In this part of the algorithm, we store all the information necessary for the global simulator in $\lsim$, and describe in the next section what it means to run this simulator. We will use the notation $\lsim.var$ to denote the variable $var$ within the simulator $\lsim$. Recall that $\{x_j\}$ is a given, dense enough sample of $\Xsp$ to produce a net at scale $\delta$. Let $\osim(y,p,\patht)$ denote running $p$ paths of the simulator starting at $y$ for time $\patht$. We treat all points on the charts (resulting from LMDS) as row vectors. 
We compute, as described in the algorithm in Figure \ref{f:algomain}, for each chart $k\in\Gamma$, a drift $\lsim.\overline b_k$ and a diffusion coefficient $\lsim.\overline\sigma_k$,  effectively approximating the dynamics in the chart by that of a It\^o diffusion with constant drift and diffusion coefficient in the chart space (i.e. after mapping to Euclidean space using the LMDS map), estimated from the $p$ paths from $\mathcal{S}$ observed in the chart.

\subsubsection{Computational cost}
Each net point has order $2^d$ connections at most, by the properties of the $\delta$-net. Let $S$ denote the cost of running one simulation of length $\patht$. Thus, the computational cost of the construction phase, for each chart, is of order
\begin{equation}
\underbrace{mS}_{\text{landmark simulation}}+\underbrace{pS}_{\text{path simulation}}+\underbrace{2^dmpD}_{\text{LMDS}}=(m+p)S+2^dmpD
\end{equation}
We note that the term $2^dmpD$ can be decreased to $dpD$ since a $d$-dimensional plane may be estimated with only $O(d)$ points. Instead of using all $cm$ points as landmarks, one could choose a (e.g. random) subset of these landmarks for the initial embedding (although all these landmarks will be needed later for computing $T$).
All these steps are easily parallelized, so the per-chart cost above is also a per-processor cost if enough processors are available. Finally, observe that there at most $O(\delta^{-d})$ such charts (this follows from the property of the $\delta$-net, which ensures that balls of radius $\delta/2$ centered at net points are disjoint).

\subsection{The learned simulator} \label{sec_lsim}
The second part of our construction is to actually define the \ls, i.e. the reduced simulator of the system, given the parameters learned in the first stage. In other words we must describe what a single step of time $\Delta t$ looks like starting at a location $x$ in chart $i$. Figure \ref{f:learnedsimulator} contains the pseudocode for the algorithm implementing the strategy we now discuss (written assuming $x$ is a row vector).
Given the position at time $t$ is $x$ in chart $i$, the position at time $t+\Delta t$ is determined by first choosing $i'$ so that $\lsim.c_{i',j}$ is closest to $x$, among all possible $\{\lsim.c_{k,j}\}_k$. If $i'\neq i$, then the coordinates of $x$ (in chart $i$) are changed to coordinates in chart $i'$ by applying the switching map $T_{i,i'}$. Now that $x$ is in the coordinates of the chart $i'$, a forward stochastic Euler step is taken using the drift and diffusion coefficients in chart $i'$. Finally, this Euler step is confined to the local chart by applying a ``wall function'' $W$; details will be discuss in the next section.

\begin{figure}[hb]
\centering
    \textbf{Learned Simulator}\par
\begin{framed}
\begin{algorithmic}
 \item[] $(x,i') = $ {\bf simulator\_step}$(x,i,\lsim)$
 \item[] \hspace{.1in} $\%$ \emph{select new coordinate chart}
 \item[] \hspace{.1in} $i' = \text{argmin}_j||x - \lsim.c_{i,j}||^2_{\R^d}$
 \item[] \hspace{.1in} {\bf if} $i' \not = i$
 \item[] \hspace{.1in} \hspace{.3in} $x \leftarrow (x - \lsim.\mu_{i,i'})\lsim.T_{i,i'} + \lsim.\mu_{i',i}$
 \item[] \hspace{.1in} {\bf end}
 \item[]
 \item[] \hspace{.1in} $\%$ \emph{forward Euler step}
 \item[] \hspace{.1in} $\eta \sim \mathcal{N}(0,I_d)$
 \item[] \hspace{.1in} $x \leftarrow x + \lsim.\wb{b}_{i'} \Delta t + \eta \lsim.\wb{\sigma}_{i'}  \sqrt{\Delta t}$
 \item[]
 \item[] \hspace{.1in} $\%$ \emph{prevent escape from local chart}
 \item[] \hspace{.1in} {\bf if} $|x| > 3\delta/2$
 \item[] \hspace{.1in} \hspace{.3in} $x \leftarrow W(x):=\frac{x}{|x|}\left(2\delta - \frac{\delta}{2} \exp\left(3-\frac{2}{\delta}|x|\right)\right)$
 \item[] \hspace{.1in} {\bf end}
\end{algorithmic}
\end{framed}
\caption[Pseudocode for a single step of the \ls.]{Algorithm for running the \ls, by combining local diffusions and linear transition map between charts.}
\label{f:learnedsimulator}
\end{figure}

\clearpage

\subsubsection{Computational cost}
The \ls\ runs in $d$ dimensions, and does not require calls to the original simulator, so the running time now only depends on the local complexity of the homogenized problem. The number of simulation steps required to approach stationarity still depends upon the time it takes to converge to equilibrium, but so too did the original simulator. 

If $c \approx 2^d$ is the maximum number of connections each net point has, the computational cost of each time step of the \ls\ is of order
\begin{equation}
\underbrace{d2^d}_{\text{distance computation}}+\underbrace{d^2}_{\text{forward step}} = (2^d+d)d\,.
\end{equation}
%
%


\section{
Theoretical Results and Guarantees
} \label{sec_err}
In this section, we first introduce the minimum amount of material to precisely state the Theorem in section \ref{sec_thmstate}. Then we introduce the necessary mathematical objects to state the Lemmata used during the main proof in section \ref{sec_prelim}. In section \ref{sec_proofs}, we prove Theorem \ref{thm_main} and the Lemmata used.

\subsection{Theorem Statement}\label{sec_thmstate}

Let $\{y_k\}_{k\in\Gamma}$ denote the set of net points.
For each $k$ we have the mapping $\MDS_k$ from $\M$ to $\R^d$ given by MDS.
We will assume the projection to the tangent plane at $y_k$ is invertible on a ball of radius $2\delta$, which implies $\MDS_k$ will also be invertible on a ball of radius $2\delta$ (this is an intermediate step in the proof of Lemma \ref{lemm_Lb-Lt}).
Let $\csp = B_{2\delta}(0) \times \Gamma$, equipped with the transition maps $\{\MDS_j(\MDS_k^{-1})\}$, denote the atlas.
We shift coordinates in each chart so that $c_{k,k} = \MDS_k(y_k) = 0$ (where $c_{k,j}$ was defined in section \ref{sec_pinv}), so $\csp$ contains all points in each chart within $2\delta$ of $c_{k,k}$. 
Next we state some definitions in order to mathematically define a step of the \ls. If $i\sim j$, let $S_{i,j}(x) = (x - \mu_{i,j})T_{i,j} + \mu_{j,i}$ be the transition map between charts $i$ and $j$ defined in equation \ref{eqn_switchingmap}.

\begin{wrapfigure}{r}{5cm}
\includegraphics[width=5cm]{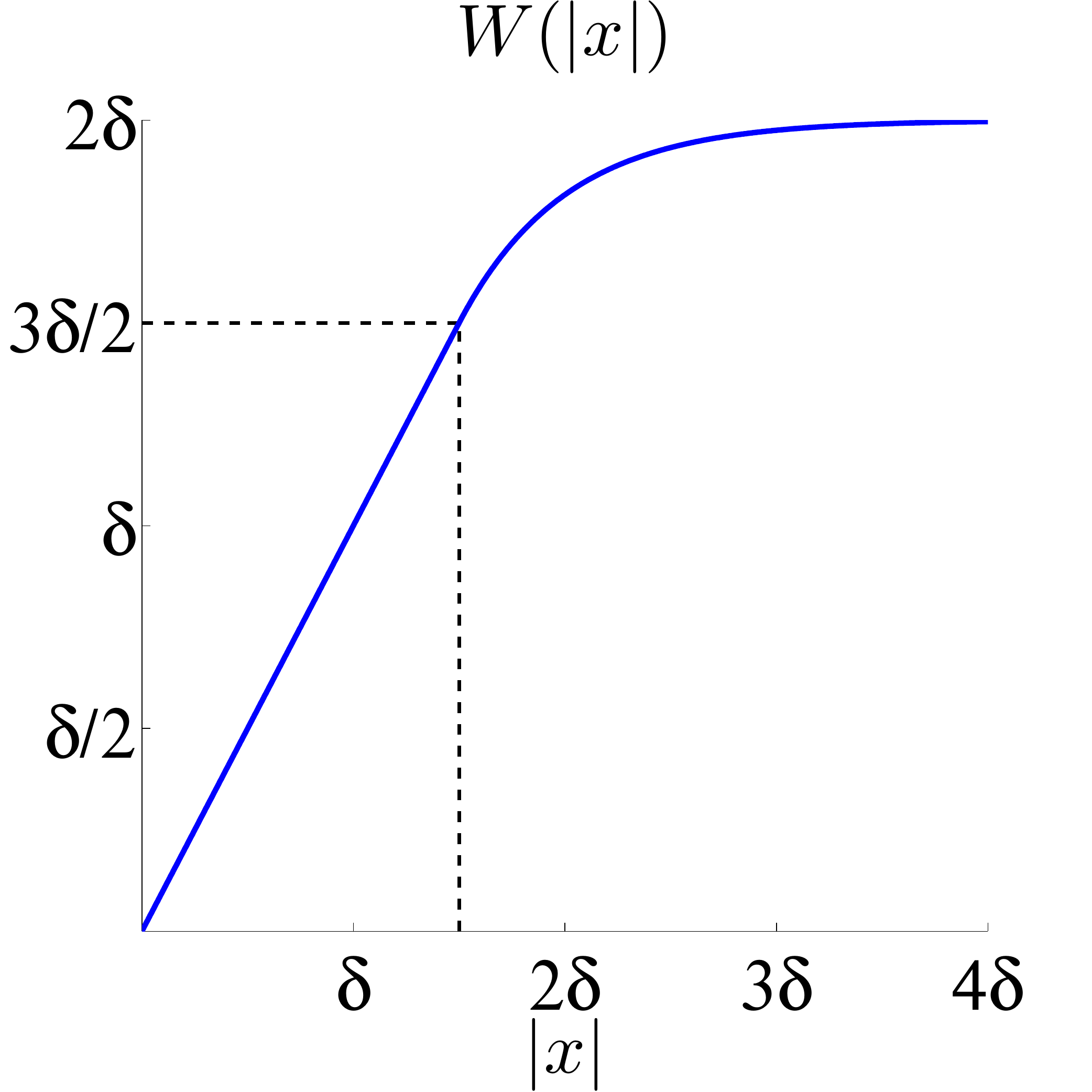}
\caption{The wall function $W$.}\label{fig_wall}
\end{wrapfigure}  

Let $W(x)$ be the wall function (which confines the simulator to $B_{2\delta}(0)$) defined by
\begin{equation}
   W(x) = \left\{
     \begin{array}{cr}
       x &  |x| \leq \frac{3\delta}{2}\\
 & \\
       \frac{2\delta x}{|x|} - \frac{\delta x}{2|x|} \exp\left(3-\frac{2}{\delta}|x|\right) &  |x| > \frac{3\delta}{2}
     \end{array}
   \right.
\label{e:W}   
\end{equation} 
There are other possible choices for $W$, but the main ingredients are: $W$ is $C^2$, invertible, equal to the identity on a ball of radius $3\delta/2$, and takes $\R^d \rightarrow B_{2\delta}(0)$.

Let $B_t$ be a standard Brownian motion in $\R^d$. The update rule for the \ls\ starting at $(x_0,i_0)$ is
\begin{align}
 i_{k+1} &= \text{argmin}_j\big\lvert x_k-c_{i_k,j}\big\rvert \, \label{eqn_ik} \\
 x_{k+1} &= W\big(S_{i_k,i_{k+1}}(x_k) + \wb b_{i_{k+1}} \Delta t  +  \wb \sigma_{i_{k+1}} B_{\Delta t}\big)
\end{align}

Define the \ls\ process $Z_k \in \csp$ starting at $z_0 = (x_0,i_0)$ to be $(x_k,i_k)$. See the algorithm in figure \ref{f:learnedsimulator}. We show in Lemma \ref{lemm_Zerg} that under the conditions of Theorem \ref{thm_main}, $Z_k$ has a unique stationary distribution $\mu$ on $\csp$.

We will often wish to refer to the chart index associated with the following time step, starting from an $(x,i) \in \csp$; we call this next chart index $i'(x,i)$, leaving off the $(x,i)$ where it is obvious which initial condition is being talked about:
\begin{align}
i'(x,i) = \text{argmin}_j\big\lvert x-c_{i,j}\big\rvert \label{eqn_ip}
\end{align}

Define for each $i$ the continuous time process $\wh X_t^x \in B_{2\delta}(0)$ by
\begin{align}
\wh X_t^x = W\big(S_{i,i'}(x) + \wb b_{i'} t  +  \wb \sigma_{i'} B_{t} \big) \label{eqn_xihat}
\end{align}

We will often be referring to initial conditions of the form $\icx=(x,i) \in \csp$, and for ease of notation we use $\wh X_t^\icx$ to keep track of which $\wh X$ process we are referring to, and what the starting location is. Keep in mind that $\wh X_t$ lives in the local tangent plane.

Define a measure $\wh q$ on $\csp$ for each measurable set $E \subset \csp$
\begin{align}
 \wh q(E) = \int_\csp \int_0^{\Delta t} \wh \P \big[ (\wh X_t^\icx,i') \in E \big] dt\, \mathrm{d}\mu(\icx) \,,
 \label{eqn_qhatdef}
\end{align}
where $\mu$ is the stationary distribution of $Z_k$ on $\Xsp$.
By equation \ref{eqn_xihat}, one can see that $\wh q$ is absolutely continuous w.r.t. Lebesgue measure (defined by the Lebesgue measures on each chart), and we will denote its Radon-Nikodym derivative, with some abuse of notation, $\wh q(x)$. We show in Theorem \ref{thm_main} that $\wh q$ is close to $q$, the stationary distribution of our original process $Y_t$.

Samples from $\wh q$ may be generated by the \ls\ by running $N$ steps of size $\Delta t$, and then one step of size $\delta t \sim \text{ Unif}(0,\Delta t)$.
Samples of $\wh X_{\delta t}$ are distributed according to $\int_0^{\delta t} \wh P_{t} du$, as seen by integrating over the joint density, $\wh P_u du$.
Since $Z$ is ergodic by lemma \ref{lemm_Zerg}, $Z_N \rightarrow \mu$, thus there is an $N$ for which our sampling plan ($N$ steps plus a $\delta t$ step) will approximate $\wh q$ sufficiently well.
Choosing $N$ is not easy and depends on the problem, although this is a difficulty with the original simulator as well; in practice, one should choose $N$ large enough that simulations reach a large fraction of the charts in the simulator.
The reason for the random final time step $\delta t$ (and the integral in equation \ref{eqn_qhatdef}) is to average over positions visited in between applications of the switching map.
 
Lastly, define a mapping back to the original space $G:\csp \rightarrow \Xsp$ by $G(x,i) = \MDS_i^{-1}(x)$ for each $(x,i) \in \csp$. Define the push forward map $G_*$ on $\wh q$ by
\begin{align}
G_* \wh q(y) = \sum_{x:G(x)=y} \wh q(x) dx \label{eqn_gstar}
\end{align}
The set $\{x:G(x)=y\}$ is finite since the $\MDS_i$'s are one-to-one. This mapping $G_*$ will allow us to compare $q$ and $\wh q$ as densities on $\Xsp$.

\begin{theorem} \label{thm_main}
Let $(Y_t)_{t\ge0}$ be an It\^o diffusion on a smooth compact connected $d$-dimensional manifold $\M$ with no boundary:
\begin{align}
dY_t = b(Y_t)dt + \sigma(Y_t)dB_t \label{eqn_Ydef}\,,
\end{align}
with $b,\sigma$ Lipschitz, and $\sigma$ uniformly elliptic, i.e. there exists $\lambda>0$ such that for all $x\in\M$ and $v\in T_x(\M)$ $v^T\sigma(x)\sigma(x)^Tv\ge\lambda|v|^2$.
Let $\delta$ be small enough so that for every $x$ the orthogonal projection $\M\rightarrow T_x(\M)$ is invertible on a ball of radius $2\delta$ on $T_x(\M)$.
Let $q$ be density (with respect to volume measure on $\M$) of the unique stationary distribution of $Y_t$. Let $\wh q$ be the density on $\csp$ generated by the \ls, as defined above in \eqref{eqn_qhatdef}. There exists constants $c_1,c_2$ such that if the number of sample paths satisfies $p > c_1(\tau,\M)/\delta^4$ then with probability at least $1-2\exp(-\tau^2)$,
 \begin{align}
  ||q - G_*\wh q||_{L^1(\M)} \leq c_2 \delta \ln(1/\delta)
  \label{e:mainbound}
 \end{align}
\end{theorem}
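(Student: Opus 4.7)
The strategy is to decompose the error $\|q - G_*\wh q\|_{L^1(\M)}$ into a statistical piece (from empirical estimation of $\wb b_k,\wb\sigma_k$), a local geometric piece (from approximating drift and diffusion by constants on each chart and the tangent planes by LMDS embeddings), and a dynamical piece (propagating one-step kernel errors to stationary-distribution errors via ergodicity of $Y_t$).

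First I would establish the statistical bounds. Because $\wb b_k$ is the sample mean of $p$ endpoints of trajectories of length $\patht$, and $\wb \sigma_k\wb \sigma_k^T$ is the corresponding sample covariance, sub-Gaussian concentration (respectively matrix Bernstein for the covariance) combined with the hypotheses $p > c_1(\tau,\M)/\delta^4$ and $\patht\asymp\delta^2$ yield, simultaneously for all $|\Gamma| = \Ord(\delta^{-d})$ charts with probability at least $1-2e^{-\tau^2}$,
\[
 \|\wb b_k - b^*_k\| = \Ord(\delta), \qquad \|\wb\sigma_k\wb\sigma_k^T - \sigma^*_k(\sigma^*_k)^T\|_{\text{op}} = \Ord(\delta^2),
\]
where $b^*_k,\sigma^*_k$ are the ``ideal'' chart coefficients obtained from projecting $b(y_k),\sigma(y_k)$ to the tangent plane at $y_k$. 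The $\delta^{-4}$ sampling rate is precisely calibrated so that estimators of quantities of order $\patht=\delta^2$ have per-coordinate standard deviation $\Ord(\delta)$.

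Second, on a single chart I would bound the one-step kernel error. Three ingredients combine: Lipschitz continuity of $(b,\sigma)$ makes the constant-coefficient generator with parameters $(b^*_k,\sigma^*_k)$ agree with the true generator up to $\Ord(\delta)$ on the $2\delta$-ball about $y_k$; LMDS chart maps and the linear transition maps $S_{k,j}$ approximate the orthogonal tangent-plane projections and $\MDS_j\circ\MDS_k^{-1}$ respectively up to $\Ord(\delta^2)$, by standard reach/curvature estimates under the invertibility hypothesis on orthogonal projections; and weak Euler-Maruyama error with step $\Delta t\asymp\delta/\ln(1/\delta)$ contributes an additional $\Ord(\Delta t)$. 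The wall function $W$ activates only on Gaussian-tail events of probability $\Ord(\exp(-c\delta^2/\Delta t))=\Ord(\delta)$ by our calibration of $\Delta t$. Averaging over $\delta t\sim\text{Unif}(0,\Delta t)$ as in \eqref{eqn_qhatdef} smooths out the discrete chart-switching jump, so the one-step \ls\ kernel $\widehat P_{\Delta t}$ (lifted to $\M$ via $G$) and the true kernel $P_{\Delta t}$ of $Y_t$ differ by $\Ord(\delta\,\Delta t)$ in operator norm against Lipschitz test functions.

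Third, I would propagate the one-step error to the stationary distributions. Uniform ellipticity on the compact manifold $\M$ gives $Y_t$ a strict spectral gap $\lambda>0$, and the standard perturbation identity for stationary measures of ergodic Markov chains gives, schematically,
\[
 q - G_*\wh q \;=\; (G_*\wh q)\,\bigl(P_{\Delta t}-\widehat P_{\Delta t}\bigr)\,(I-P_{\Delta t})^{-1},
\]
valid on mean-zero test functions. Because the pseudo-inverse $(I-P_{\Delta t})^{-1}$ has norm $\lesssim (\lambda\Delta t)^{-1}$ on the orthogonal complement of constants, and the one-step difference is $\Ord(\delta\,\Delta t)$, we obtain $\|q - G_*\wh q\|_{L^1(\M)} = \Ord(\delta/\lambda)$. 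The extra $\ln(1/\delta)$ factor traces back to $\Delta t\asymp\delta/\ln(1/\delta)$, which is forced on us to keep the wall-event probability $\Ord(\delta)$. The pushforward $G_*$ contributes only a bounded Jacobian distortion because the reach assumption makes each $\MDS_k^{-1}$ bi-Lipschitz, so overlaps in $\csp$ integrate correctly to a proper $L^1$-density on $\M$.

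\textbf{Main obstacle.} The hardest step is the third: turning per-chart generator errors into a bound on stationary distributions while simultaneously handling (a) the chart-change rule $i'(x,i)=\text{argmin}_j|x-c_{i,j}|$, which introduces a discontinuous component into $\widehat P_{\Delta t}$ that must be absorbed into the random-time-averaged operator defining $\wh q$ before spectral-gap arguments apply; (b) the state-space mismatch between $\csp$ (a bundle of overlapping charts) and $\M$, which forces all comparisons to go through test functions pulled back by $G$ with due accounting for the overlaps; and (c) ensuring the errors combine additively, not multiplicatively, across the many chart transitions a sample path undergoes before mixing.
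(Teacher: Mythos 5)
Your decomposition into statistical, geometric, and propagation pieces matches the paper's Lemmas \ref{lemm_L-Lb}--\ref{lemm_Lt-Lh}, but your third piece takes a genuinely different route from the paper, and in its current form it has a real gap. The paper does not perturb stationary measures via a spectral gap. It follows the Poisson-equation strategy of \cite{jonm_conv}: solve $\L\psi = \phi - \wb\phi$ on $\M$ (uniform ellipticity gives $\|\psi\|_{C^2}\lesssim\|\phi\|_\infty$), apply It\^o's formula to $\psi\circ\MDS_{i_k}^{-1}$ along the ATLAS trajectory $Z_k$, kill the martingale term by martingale convergence, and invoke the ergodic Lemma \ref{lemm_ltc} to obtain $\E_\mu\E\big[\frac{1}{\Delta t}\int_0^{\Delta t}\wh\L_z\psi_z(\wh X_t^z)\,dt\big]=0$; the generator-comparison lemmas then replace $\wh\L_z$ by $\L_z$ at cost $\Ord(\delta\ln(1/\delta))$, and since $\L_z\psi_z = (\phi-\wb\phi)\circ G$, the bound \eqref{eqn_phibd} follows. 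Both strategies are classical for invariant-measure accuracy of numerical SDE schemes, but they are not interchangeable here.

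The concrete gap is in your identity $q - G_*\wh q = (G_*\wh q)\big(P_{\Delta t}-\wh P_{\Delta t}\big)(I-P_{\Delta t})^{-1}$: it presupposes that $G_*\wh q$ is the invariant measure of some Markov kernel $\wh P_{\Delta t}$ acting on $\M$. It is not. The ATLAS process $Z_k$ is Markov on the atlas $\csp$, and because $G$ is many-to-one on chart overlaps, $G(Z_k)$ is in general not a Markov process on $\M$ --- the one-step distribution from $G(Z_k)=y$ depends on the chart index, not only on $y$ --- so there is no kernel on $\M$ for which $G_*\wh q$ is invariant, and the right-hand side of the perturbation identity has no referent. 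You flag exactly this in obstacle (b), but then apply the identity regardless. The Poisson-equation route is precisely how the paper dissolves the mismatch: the test function $\psi$ lives on $\M$, is pulled back to each chart, and the ergodic average is taken along the genuine Markov chain on $\csp$, so the overlap multiplicity enters only through the definition of $G_*$ in \eqref{eqn_gstar}. A secondary point: you bound the one-step kernel difference against Lipschitz test functions, but elliptic regularity only controls the Poisson solution in $C^2$; the paper's lemmas are correctly stated against $\|f\|_{C^2}$, and an argument built on Lipschitz-only control would not close.
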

Some remarks are in order:
\begin{itemize}
\item[(i)] Lipschitz coefficients guarantee existence and uniqueness of solutions to \eqref{eqn_Ydef}; strong ellipticity (together with smoothness and connectivity of $\M$) guarantees that the process $Y_t$ has a unique stationary distribution with smooth density (this latter assumption may be weakened to include hypo-elliptic systems).
\item[(ii)] From a computational perspective, we note that as $G$ appears in the statement of the Theorem, it would be very useful to compute this mapping $G$. This is in general hard in arbitrary metric spaces, although one can always use the simple approximation $\wh G(x,i) = y_i$ which approximates $G$ at scale $\delta$.
\item[(iii)] The dimension of the state space of the system does not appear in the Theorem - only the intrinsic dimension of $\Xsp$ crucially affects the sampling requirements of the construction.
\item[(iv)] The ATLAS simulator is random, since it depends on the random paths collected to construct ATLAS, and so $\wh q$ is random. The result states that with high probability $\wh q$ is close to $q$ in the sense of \eqref{e:mainbound}.
\item[(v)] In practice we are interested in applying our construction and the Theorem to processes $Y_t$ that do not satisfy the assumptions above. This is possible when it is the case that above a certain timescale $t_0$, $Y_t$ may be well approximated by a process $\wt Y_t$, representing $Y_t$ {\em{homogenized at time scale $\patht$}}, which does satisfy the assumptions of the Theorem. In algorithmic terms, the microscale simulator (for $Y_t$) may not satisfy the conditions imposed above, but it may be well-approximated at timescales larger than $\patht$ by a simulator (for $\wt Y_t$) of the form \eqref{eqn_ygeneral} satisfying the conditions of the theorem on the timescale $\patht$. Our algorithm may then be applied to such a microscale simulator since it operates at time scales greater $\patht$ (its input is a set of paths of length $\patht$), and cannot therefore tell the difference between a microscale simulator for $Y_t$ and one for its homogenization $\wt Y_t$. In this case the error in approximating the original process $Y_t$ by $\wt Y_t$ may be added to the right hand side of \eqref{e:mainbound}.
\end{itemize}

We now turn to the proof of these results, after which we present many examples in section \ref{sec_exs}.

\subsection{Algorithmic Complexity}
Suppose that a single call to the original simulator of length $\patht$ costs $S$. The total number of points in the net $\Gamma$ contains $\Ord(\delta^{-d})$ points with constant depending on the volume of the manifold $\M$. From each point we will see that we must choose $p=\Ord(\delta^{-4})$ to estimate the parameters of the \ls\ simulator to within accuracy $\delta$. Assuming the expensive part of the construction algorithm is running the simulations, the total cost of construction is $\Ord(S\delta^{-d-4})$.

During each call to the \ls, we will see from the proof that we must choose our timestep $\Delta t = \patht / \log(1/\delta)$. The $\log(1/\delta)$ term may be neglected, and we will do so in this discussion. At each call to the simulator, we must compute the distance to each neighbor, of which there are $\Ord(2^d)$. Assuming each distance costs $\Ord(d)$ flops, the total cost of running the \ls\ for time $\patht$ is $\Ord(d2^d)$. 

Comparing the running time of the original simulator $S$ with the \ls\ amounts to comparing the cost of $S$ to $d2^d$. The benefit of the \ls\ over the original one then clearly depends upon how expensive the original simulator was, which can depend on many factors: length of the timestep, cost of evaluating functions, ambient dimension, etc. One thing is clear - the cost of $S$ depends on microscale properties, while $d2^d$ does not. Since the cost $S$ varies for each problem, in our examples in section \ref{sec_exs} we compare the \ls\ cost to the original simulator cost by comparing the size of the timestep alone. 

The running time is the main benefit of the \ls, but not the only one. Another advantage is that long paths of the simulator can be stored using only $d$ dimensions rather than the ambient dimension where $\Xsp$ lives. Yet another benefit is that some postprocessing has already been done, for example consider the question ``how long does $Y_t$ spend near the state $Y^*$?''. After running the original simulator one would have to compute a distance to $Y^*$ for many data points. After running the \ls\ answering this question requires only computing distances from the few net points near $Y^*$ to $Y^*$ to obtain the result with accuracy $2\delta$, and the \ls\ already knows which parts of the paths are in charts near $Y^*$. Finally, ATLAS organizes the effective state space in terms of simple local low-dimensional models, enabling novel, interactive visualizations of the dynamical system, that can run on any (portable) device with a web browser, thanks to the compression achieved by the \ls: this is subject of ongoing work.

\subsection{Preliminaries for Proofs} \label{sec_prelim}
Before diving into the proof of Theorem \ref{thm_main}, we need several definitions. In figure \ref{sec_notdef} we list a set of pointers to various definitions and notations used throughout.
Let $X_t = \MDS_i(Y_t) \in B_{2\delta}(0)$.
Let $\tau$ be the time when $X_t$ first hits the boundary of $B_{2\delta}(0)$.
Let $\MDS_{i,k}$ denote the $k^{th}$ coordinate of $\MDS_i$. Given that $Y_t$ satisfies \eqref{eqn_Ydef}, a straightforward application of It\^o's formula shows that $X_t=\MDS_i(Y_t)$  solves, for $t \leq \tau$, the It\^o SDE
\begin{align}
& dX_t = b_i(X_t)dt + \sigma_i(X_t)dB_t \label{eqn_Xdef} \\
& (b_i)_k(x) := \nabla \MDS_{i,k}(\MDS_i^{-1}(x)) \cdot b(\MDS_i^{-1}(x))  + \frac{1}{2} \sum_{j,l} \frac{\partial \MDS_{i,k}}{\partial x_j \partial x_l}(\MDS_i^{-1}(x))(\sigma \sigma^T)_{j,l}(\MDS_i^{-1}(x)) \label{eqn_bidef} \\
&(\sigma_i)_{k,j}(x) := \sum_l \frac{\partial \MDS_{i,k}}{\partial x_l}(\MDS_i^{-1}(x))  \sigma_{l,j}(\MDS_i^{-1}(x)) \label{eqn_sidef}
\end{align}

Let $\L$ be the generator (see \cite{oksendal}) for $Y_t$ on $\Xsp$, and, for $\icx=(x,i)\in\csp$, $\L_\icx$ be the generator of $X_t^\icx$. $\L$ is uniformly elliptic on $\Xsp$, since $Y_t$ is uniformly elliptic on $\M$.

We will be comparing a number of simulators to bridge the gap between the simulation scheme $\lsim$ and the true simulator $\osim$. We will do this by introducing intermediate processes in the local charts, and we will keep track of which chart the ATLAS process $Z_k$ is in separately. For this reason we consider the following processes on $B_{2\delta}(0)$, started at $\icx = (x,i)$:
 \begin{align}
  \wb X^\icx_t &= \MDS_{i'}\left(\MDS_{i}^{-1}\left(x\right)\right) + \wb b_{i'} t + \wb \sigma_{i'} B_t \label{eqn_Xbdef}\\
 \wt X^\icx_t &= S_{i,i'}(x) + \wb b_{i'} t + \wb \sigma_{i'} B_t \label{eqn_Xtdef} \\
 \wh X^\icx_t &= W\big(\wt X^\icx_t\big) \label{eqn_Xhdef}
 \end{align}
where $i'$ is defined  as in \eqref{eqn_ip}. The processes $\wb X_t, \wt X_t$ are natural stepping stones from $X_t$ to $\wh X_t$: $\wb X_t$ is the process which differs from $X_t$ locally only in that it uses the learned drift and diffusion coefficients. $\wt X_t$ differs from $\wb X_t$ only in that it uses the learned transition map $S_{i,i'}$ rather than the true transition map $\MDS_{i'}\circ \MDS_{i}^{-1}$. Finally $\wh X_t$ differs from $\wt X_t$ only because of the application of the local wall function $W$. Given any initial condition $\icx = (x,i)$, the three processes $\wb X^\icx_t,\wt X^\icx_t,\wh X^\icx_t$ are solutions of SDE's with generators $\wb \L_\icx, \wt \L_\icx, \wh \L_\icx$, respectively, on chart $i'$. These generators clearly depend on the chart $i'$, but as we will see in Lemma \ref{lemm_Lb-Lt}, they will also depend on $x$, and we keep track of this by putting $\icx$ as a subscript on the generators. We will prove that these generators are close to $\L_\icx$ for all $\icx \in \csp$. Then we will show, using ideas from \cite{jonm_conv}, that this is enough to imply bounds on the stationary distributions. 

\subsection{Proofs} \label{sec_proofs}

Before we begin the proof of Theorem \ref{thm_main}, we state some Lemmata which we will prove later, in order to keep the details until the end, while first showing the main ideas of the proof.

\begin{lemma} \label{lemm_Zerg}
 The process $Z_k$ is ergodic with stationary distribution $\mu$.
\end{lemma}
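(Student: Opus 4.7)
The plan is to verify the classical hypotheses for existence, uniqueness, and convergence to a stationary distribution of a Markov chain on a compact state space, namely a Doeblin-type minorization combined with irreducibility. The state space $\csp = B_{2\delta}(0)\times\Gamma$ is a finite disjoint union of compact Euclidean balls (finite because $\M$ is compact and $\Gamma$ is a $\delta$-net), and the update rule \eqref{eqn_ik}--\eqref{eqn_xihat} shows that $Z_k=(x_k,i_k)$ is a time-homogeneous Markov chain depending only on the current state plus the independent Gaussian increment $B_{\Delta t}$.

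First I would establish a one-step lower bound on the transition density inside each chart. Given $(x,i)\in\csp$ the next index $i'=i'(x,i)$ is determined deterministically, and $x_{k+1}$ is the image under $W$ of a Gaussian with mean $S_{i,i'}(x)+\wb b_{i'}\Delta t$ and covariance $\wb\sigma_{i'}\wb\sigma_{i'}^T\Delta t$. Since $\wb\sigma_{i'}$ is positive definite (the sample covariance of endpoints of a uniformly elliptic diffusion is invertible almost surely for $p\ge d+1$), this pre-wall Gaussian density is strictly positive on all of $\R^d$; inside $\{|x|\le 3\delta/2\}$ the wall $W$ is the identity, so the one-step transition kernel possesses a continuous density bounded below by a positive constant on any compact subset of the interior. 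This is the Doeblin minorization on a small set.

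Next I would verify irreducibility by proving that the induced chain on the chart index $i_k$ visits every chart. The neighbor graph $(\Gamma,\sim)$ is connected because $\M$ is connected and $\{B_\delta(y_k)\}_{k\in\Gamma}$ covers $\M$. Given $i_k=i$ and any neighbor $j\sim i$, the Gaussian component has positive probability of placing $x_k$ in the open Voronoi cell $\{y:|y-c_{i,j}|<|y-c_{i,\ell}|\,\forall\ell\}$ (which is nonempty because $c_{i,j}=\MDS_i(y_j)$ are distinct points at pairwise distance $\gtrsim\delta$ strictly inside $B_{2\delta}(0)$), and in that event $i_{k+1}=j$. Iterating at most $|\Gamma|$ times, the chain can reach any target chart from any starting chart with strictly positive probability, so $Z_k$ is $\psi$-irreducible with $\psi$ equal to Lebesgue measure on $B_{3\delta/2}(0)\times\Gamma$.

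Combining the minorization with $\psi$-irreducibility on the compact space $\csp$, Harris' theorem (equivalently the finite-step Doeblin condition, see e.g.\ Meyn--Tweedie) delivers a unique invariant probability measure $\mu$ on $\csp$ and geometric convergence of the law of $Z_k$ to $\mu$ in total variation, which is exactly the ergodicity asserted by the lemma. The only subtle point I anticipate is verifying that the deterministic \emph{argmin} switching rule does not create unreachable regions of $\csp$; this reduces to checking that every $c_{i,j}$ lies strictly in the interior of $B_{2\delta}(0)$ and that all neighbor Voronoi cells have nonempty interior, which follows from $|y_i-y_j|\le 2\delta$ together with the near-isometry property of $\MDS_i$ established in the proof of Lemma \ref{lemm_Lb-Lt}.
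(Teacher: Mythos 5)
You take a genuinely different route from the paper. The paper's proof first observes that with the argmin switching rule the chain $Z_k^0$ is \emph{not} Feller continuous (the transition kernel jumps across Voronoi boundaries because a different affine switching map $S_{i,i'}$ is applied), then introduces a smoothed chain $Z_k^\eta$ obtained by adding uniform noise of size $\eta$ to the argmin, proves existence of an invariant measure for $Z^\eta$ via Feller continuity, tightness and the Krylov--Bogolyubov theorem, proves uniqueness via irreducibility, and finally passes to the limit $\eta\to0$ showing the $\eta$-stationary measures converge to a stationary measure for the original chain. Your Harris/Doeblin route elegantly sidesteps the Feller issue entirely, since Harris-type arguments never require continuity of $z\mapsto P(z,\cdot)$, and it would also give the stronger conclusion of geometric ergodicity; your irreducibility step (connectivity of the $\delta$-net graph plus full support of the Gaussian increment) is essentially identical to the paper's.

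However, your one-step minorization as stated does not actually furnish a Doeblin small set, and this is a genuine gap. From $(x,i)$ the next chart index $i'=i'(x,i)$ is \emph{deterministic}, so the one-step kernel $P\big((x,i),\cdot\big)$ is supported entirely on $B_{2\delta}(0)\times\{i'\}$; as $(x,i)$ ranges over any neighborhood that crosses a Voronoi boundary in chart $i$, the target chart $i'$ changes, so there is no single reference measure $\nu$ with $P(z,\cdot)\ge\epsilon\nu(\cdot)$ uniformly on that neighborhood. The density lower bound you prove is a lower bound on a \emph{moving} target chart, not a minorization. To repair this you would need either to (a) take the candidate small set $C$ to lie inside a single Voronoi cell of a fixed chart so that $i'$ is constant on $C$ and then invoke the irreducibility to show $C$ is reached with uniformly positive probability, or (b) establish an $N$-step minorization directly by exhibiting an $N$ and a fixed target set $A^*\times\{j^*\}$ reachable with probability $\ge\epsilon$ from every $z\in\csp$ with density bounded below on $A^*$. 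You gesture at the ``finite-step Doeblin condition'' but do not carry out either construction, nor do you check aperiodicity (which does follow from the strictly positive $n$-step density, but should be stated). With those repairs your argument would close, and would arguably be cleaner than the paper's $\eta$-perturbation-and-limit argument.
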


\begin{lemma} \label{lemm_ltc}
For any smooth test function $F:\csp\times B_{2\delta}(0) \rightarrow \R$ and initial condition $Z_0$,
\begin{align}
 \frac{1}{n}\sum_{k=1}^n \int_0^{\Delta t} F\big(Z_k,\wh{X}_t^{Z_k}\big)dt \rightarrow  \E_\mu\left[\E\left[\dsst\int_0^{\Delta t} F \big(z,\wh{X}^z_t\big) dt \right]\right]
\end{align}
a.s. as $n \rightarrow \infty$. Here $\E_\mu$ means taking the expectation over the initial condition $z \sim \mu$, and $\E$ is taking the expectation over the transition probabilities of $\wh X^z_t$.
\end{lemma}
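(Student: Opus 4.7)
The plan is to view the integrand $H_k := \int_0^{\Delta t} F(Z_k,\wh X_t^{Z_k})\,dt$ as a functional of a single Markov chain and then invoke a pointwise ergodic theorem. The key observation is that the update rule \eqref{eqn_ik}--\eqref{eqn_xihat} drives both $Z_{k+1}$ and the path $\{\wh X_t^{Z_k}\}_{t\in[0,\Delta t]}$ from the \emph{same} Brownian increment. Thus if we represent the algorithm's randomness as an i.i.d.\ sequence $\{B^{(k)}\}_{k\ge0}$ of standard Brownian motions on $[0,\Delta t]$, independent of $Z_0$, we may write $H_k = \Psi(Z_k,B^{(k)})$, where
\begin{align*}
\Psi(\icx,\omega)\;:=\;\int_0^{\Delta t} F\bigl(\icx,\wh X_t^\icx(\omega)\bigr)\,dt
\end{align*}
is a deterministic functional. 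Because $F$ is continuous and $\wh X^\icx_t$ is confined to the compact set $B_{2\delta}(0)$ by the wall map $W$, $\Psi$ is bounded on $\csp\times C([0,\Delta t];\R^d)$.

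First I would introduce the augmented chain $\widetilde Z_k := (Z_k,B^{(k)})$ on $\csp\times C([0,\Delta t];\R^d)$. Its transition kernel sends $(\icx,\omega)$ to $(\icx',\omega')$, where $\icx'$ is the deterministic ATLAS update applied to $(\icx,\omega)$ and $\omega'$ is an independent fresh Wiener draw. Lemma \ref{lemm_Zerg} gives ergodicity of $Z_k$ with unique invariant measure $\mu$; because the second coordinate is refreshed independently at each step, the product $\mu\otimes\mathcal W$ (with $\mathcal W$ Wiener measure) is invariant for $\widetilde Z_k$, and the ergodicity of $Z_k$ lifts to ergodicity of $\widetilde Z_k$ (any shift-invariant event of the augmented chain is measurable with respect to the $Z$-coordinate alone, since the $B^{(k)}$ are i.i.d.\ and independent of the past, and so is $\mu$-trivial). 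Applying Birkhoff's pointwise ergodic theorem to the bounded observable $\Psi$ along the chain $\widetilde Z_k$ started from $\mu\otimes\mathcal W$ yields
\begin{align*}
\frac{1}{n}\sum_{k=1}^n \Psi(\widetilde Z_k)
\;\xrightarrow[n\to\infty]{\text{a.s.}}\;
\int \Psi\,d(\mu\otimes\mathcal W)
\;=\;\E_\mu\!\left[\,\E\!\left[\int_0^{\Delta t} F(\icx,\wh X^\icx_t)\,dt\right]\right],
\end{align*}
which is exactly the desired right-hand side.

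To upgrade this from a stationary initial condition to an arbitrary $Z_0$, I would appeal to the Harris/Doeblin structure inherent in Lemma \ref{lemm_Zerg}: each ATLAS step has a nondegenerate Gaussian component $\wb\sigma_{i'}\wb\sigma_{i'}^T\Delta t$ confined to a bounded chart, giving a uniform minorization of the one-step kernel and hence geometric ergodicity of $Z_k$ in total variation from every starting point. Since $\Psi$ is bounded, this implies that the Ces\`aro averages above converge $\P_{Z_0}$-a.s.\ for every deterministic $Z_0\in\csp$. The only delicate point is the lifting of ergodicity from $Z_k$ to $\widetilde Z_k$, which I would expect to be the main (but standard) obstacle; the argument sketched above, together with Fubini to integrate out the Wiener coordinate, disposes of it. No additional regularity of $F$ beyond continuity and boundedness is used, so the smoothness hypothesis is merely for convenience.
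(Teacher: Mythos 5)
Your proposal is correct in substance but takes a genuinely different augmentation than the paper. The paper's proof also reduces the claim to Birkhoff's pointwise ergodic theorem on an augmented chain, but it augments with the \emph{output} of the step: it sets $Q_n = (Z_n, Z_{n+1}, I_n)$, where $I_n$ is the integral of interest, and describes the invariant law of $Q_n$ as a measure $\gamma$ built from the conditional kernel $\nu(z_1,z_2,\cdot)=\P[I_n\in\cdot\mid Z_n=z_1,Z_{n+1}=z_2]$; ergodicity of $Q$ is then deduced from weak convergence of $Q_n$ via the ergodicity of $Z_n$, and $\phi(Q_n)=I_n$ is the observable fed to Birkhoff. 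You instead augment with the \emph{input} randomness, keeping the fresh Brownian increment $B^{(k)}$ as the second coordinate. This is arguably cleaner: the transition kernel of $\widetilde Z_k=(Z_k,B^{(k)})$ is a deterministic map times an independent refresh, so every invariant measure of $\widetilde Z$ is automatically of product form $\pi_Z\otimes\mathcal W$, and uniqueness (hence ergodicity) lifts immediately from Lemma~\ref{lemm_Zerg} without having to manipulate the conditional law $\nu$. The Fubini step at the end matches the right-hand side of the lemma. The price is that the state space becomes $\csp\times C([0,\Delta t];\R^d)$, which is infinite-dimensional, but since you only use boundedness of $\Psi$ and the product structure this causes no real trouble.

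Two small imprecisions you should tighten. First, the statement that ``any shift-invariant event of the augmented chain is measurable with respect to the $Z$-coordinate alone'' is not literally true as phrased, because $Z_{k+1}$ is a function of $B^{(k)}$, so the two coordinate processes are not independent; the clean argument is the one-step refresh observation above, which pins down all invariant measures as products and then invokes the standard fact that a stationary Markov chain with unique invariant probability is ergodic. Second, your claim of a ``uniform minorization of the one-step kernel'' is not correct as stated: from $(x,i)$ the next state lies in the single chart $i'=i'(x,i)$, so the one-step kernel is far from uniformly minorized across $\csp$. What is true (and what Lemma~\ref{lemm_Zerg} essentially provides) is irreducibility plus a nondegenerate absolutely continuous component on each chart, which on a compact state space gives a multi-step Doeblin condition and hence Harris ergodicity; that is the version you need to pass from the stationary start to an arbitrary $Z_0$. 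The paper's own argument also glosses over this last passage, appealing to weak convergence of the marginal rather than a Harris-type law of large numbers, so neither proof is fully airtight on this point; but your overall strategy is sound.
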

The randomness in the statements of the following three Lemmata is that of the paths collected to learn the drift and diffusion coefficients used by $\wb X_t$ defined in \eqref{eqn_Xbdef}.

\begin{lemma} \label{lemm_L-Lb}
There exists a constant $C$ such that for any smooth test function $f:B_{2\delta}(0)\rightarrow\R$, and initial condition $z \in \csp$, with probability at least $1-4\exp(-\tau^2)$
\begin{align} 
\E \left[ \frac{1}{\Delta t}\dsst\int_0^{\Delta t} \big(\L_z- \wb{\L}_z\big)f\big(\wh X^z_t\big) dt \right] \leq C \delta \tau \sqrt{\ln(1/\delta)}||f||_{C^2} \label{eqn_L-Lb}
\end{align}
\end{lemma}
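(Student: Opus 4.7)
The plan is to bound the integrand pointwise. Both $\L_z$ and $\wb\L_z$ act on functions in chart $i' := i'(z)$ (fixed once $z=(x,i)$ is given, cf.\ \eqref{eqn_ip}) and differ only in their coefficients: $\L_z$ has the $y$-dependent drift $b_{i'}(y)$ and diffusion $\sigma_{i'}(y)\sigma_{i'}(y)^T$ from \eqref{eqn_bidef}--\eqref{eqn_sidef}, while $\wb\L_z$ uses the constants $\wb b_{i'}$ and $\wb\sigma_{i'}\wb\sigma_{i'}^T$ learned from sample paths. Hence $|(\L_z-\wb\L_z)f(y)| \leq |b_{i'}(y) - \wb b_{i'}|\,|\nabla f(y)| + \tfrac{1}{2}|\sigma_{i'}\sigma_{i'}^T(y) - \wb\sigma_{i'}\wb\sigma_{i'}^T|\,|\nabla^2 f(y)|$, which is at most $||f||_{C^2}$ times the sum of the two coefficient errors. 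Because $\wh X_t^z \in B_{2\delta}(0)$ by the wall function $W$, it suffices to bound those two errors uniformly over $B_{2\delta}(0)$; the time integral and expectation in \eqref{eqn_L-Lb} then just average these pointwise bounds.

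For each coefficient error, I would insert the chart center $y_{i'} = \MDS_{i'}^{-1}(0)$ and split $b_{i'}(y) - \wb b_{i'} = [b_{i'}(y) - b_{i'}(0)] + [b_{i'}(0) - \wb b_{i'}]$ (and analogously for $\sigma\sigma^T$). The first bracket is bounded by $C\delta$ via Lipschitz continuity of $b_{i'}$, inherited from the Lipschitz assumption on $b$ together with the $C^2$ regularity of $\MDS_{i'}$ on the ball of radius $2\delta$ (which in turn follows from the invertibility hypothesis on the tangent-plane projection, see Lemma \ref{lemm_Lb-Lt}). The second bracket is the estimation error. Recall $\wb b_{i'} = (p\patht)^{-1}\sum_{l=1}^p x_{i',l}'$ where the $x_{i',l}'$ are i.i.d.\ copies of $X_\patht$ started at $y_{i'}$. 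A short It\^o--Taylor expansion using $\E|X_s - 0| = O(\sqrt{s})$ and Lipschitz continuity of $b_{i'}$ gives the bias $\E[x_{i',l}']/\patht = b_{i'}(0) + O(\sqrt{\patht})$, which with $\patht = \delta^2$ is $O(\delta)$. Since $\Cov(x_{i',l}')$ is of order $\patht$, sub-Gaussian concentration for the empirical mean of $p$ such samples yields $|\wb b_{i'} - \E[x_{i',l}']/\patht| \leq C\tau/\sqrt{p\patht}$ with probability at least $1-2e^{-\tau^2}$; substituting $p \geq c_1\tau^2/\delta^4$ gives the $O(\delta)$ bound on the drift error.

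The diffusion estimator is handled analogously: $\wb\sigma_{i'}\wb\sigma_{i'}^T = \Cov(\{x_{i',l}'\})/\patht$, an analogous Taylor expansion gives $\Cov(x_{i',l}')/\patht = \sigma_{i'}\sigma_{i'}^T(0) + O(\sqrt{\patht})$, and a matrix concentration inequality applied to the empirical covariance of $p$ sub-Gaussian vectors of scale $\sqrt{\patht}$ yields a spectral-norm deviation of order $\tau\sqrt{\log(1/\delta)}/\sqrt{p} = O(\delta\sqrt{\log(1/\delta)})$. Combining with the $O(\delta)$ Lipschitz piece gives an $O(\delta\sqrt{\log(1/\delta)})$ bound on the diffusion error. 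A union bound over the two concentration events (one for drift, one for covariance) yields the claimed probability at least $1 - 4e^{-\tau^2}$ and the upper bound $C\delta\tau\sqrt{\log(1/\delta)}||f||_{C^2}$.

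The main technical obstacles I anticipate are: (i) producing the short-time It\^o--Taylor expansion with an $O(\sqrt{\patht})$ bias rather than the naive $O(\patht)$, which needs the moment bound $\E|X_s - 0| = O(\sqrt s)$ together with careful tracking of the second-derivative terms entering \eqref{eqn_bidef}; and (ii) pinning down the origin of the $\sqrt{\log(1/\delta)}$ factor, which I expect to arise only in the covariance estimator --- the natural route is to truncate the sub-Gaussian vectors $x_{i',l}'$ at scale $\sqrt{\patht\log(1/\delta)}$ (which keeps all of them with probability $1 - \mathrm{poly}(\delta)$) and then apply matrix Bernstein to the truncated vectors, at a cost of one logarithm in the rate.
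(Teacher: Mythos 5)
Your skeleton — triangle inequality through the chart center to split coefficient error into a Lipschitz piece plus an estimation piece, an It\^o--Taylor expansion showing the bias of the drift estimator is $O(\sqrt{\patht})=O(\delta)$, and sub-Gaussian concentration for the empirical mean and covariance — matches the paper's structure. The sub-Gaussianity of $X_{\patht}$ is established the same way (via the It\^o integral and Vershynin's proposition), and your $\E|X_s|=O(\sqrt s)$ bookkeeping of the second-derivative terms in the pushed-forward drift is the right concern.

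However, there is a genuine gap: you never take a union bound over the $|\Gamma|=O(\delta^{-d})$ charts. Your concentration bound gives the stated probability only for a single fixed $i'$, but the lemma is used inside $\E_\mu[\cdot]$ in the proof of Theorem~\ref{thm_main}, which averages over all $z\in\csp$ and hence over all charts — so the inequality must hold simultaneously for every $i\in\Gamma$ with total failure probability $O(e^{-\tau^2})$. The paper pays for this union bound explicitly by subtracting $\ln|\Gamma|=\ln c_3 + d\ln(1/\delta)$ from the concentration exponent, and that subtraction is what forces the accuracy target to $\eps=\delta\ln(1/\delta)$ and is the actual source of the logarithmic factor in the bound. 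Your proposed source of the log — truncating the samples and applying matrix Bernstein to the covariance — is not where the factor comes from, and is in fact unnecessary: Vershynin's covariance-estimation result for sub-Gaussian vectors (the same reference the paper uses for the mean) handles the empirical covariance directly, without truncation. If you repaired the proof by appending the missing union bound to your argument, you would pick up a log factor on \emph{both} the drift and the covariance estimation, not just on the covariance, and your $O(\delta)$ drift bound would need to be relaxed accordingly.
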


\begin{lemma} \label{lemm_Lb-Lt}
There exists a constant $C$ such that for any smooth test function $f$, and initial condition $z \in \csp$, with probability at least $1-4\exp(-\tau^2)$
\begin{align} 
\E \left[ \frac{1}{\Delta t}\dsst\int_0^{\Delta t} \big(\wb\L_z- \wt{\L}_z\big)f\big(\wh{X}^z_t\big) dt \right] \leq C \delta \ln(1/\delta) ||f||_{C^2} \label{eqn_Lb-Lt}
\end{align}
\end{lemma}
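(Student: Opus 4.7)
The plan is to trace the generator discrepancy $\wb\L_z - \wt\L_z$ back to its single geometric source: the deviation of the learned affine switching map $S_{i,i'}$ from the true chart transition $\MDS_{i'}\circ\MDS_i^{-1}$. Since $\wb X^z_t$ and $\wt X^z_t$ share the same drift $\wb b_{i'}$, diffusion $\wb\sigma_{i'}$, and driving Brownian motion, they are a deterministic translate of each other by the shift
\begin{align*}
e(x) := \MDS_{i'}(\MDS_i^{-1}(x)) - S_{i,i'}(x).
\end{align*}
Expanding $f$ to second order around either process, and using that the wall function $W$ is $1$-Lipschitz, the generator difference applied to $f$ and evaluated along $\wh X^z_t$ is controlled pointwise by $|e(x)|$, times a $C^2$-norm of $f$, times constants depending on $|\wb b_{i'}|$, $\|\wb\sigma_{i'}\|$ (both bounded by the Lipschitz/ellipticity constants of \eqref{eqn_Ydef} on a high-probability event inherited from the concentration work in Lemma \ref{lemm_L-Lb}) and on $1/\Delta t$. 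Consequently, with $\Delta t = \Ord(\delta/\ln(1/\delta))$, the statement reduces to the uniform geometric estimate $\sup_{x \in B_{2\delta}(0)} |e(x)| \lesssim \delta^2$.

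I would establish this estimate in three stages. First, the fact that each $\MDS_k$ is a near-isometric embedding onto $T_{y_k}\M$ implies that the true chart change $\MDS_{i'}\circ\MDS_i^{-1}$ is an affine map plus a curvature remainder of order $\delta^2$ on $B_{2\delta}(0)$; call $A^\star$ its best affine approximation. Second, the common landmarks in $A_k\cup A_j$ are endpoints of paths of length $\patht = \delta^2$ starting at the two net points, so their chart images deviate from $A^\star$ by an $O(\delta^2)$ curvature error plus a stochastic drift contribution $\patht\,\|\wb b\| = O(\delta^2)$, with sub-Gaussian tails giving probability at least $1-2\exp(-\tau^2)$. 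Third, the pseudoinverse used to compute $T_{k,j}$ amplifies the landmark errors by a factor controlled by the reciprocal of the smallest singular value of the centered landmark matrix. Uniform ellipticity of $\sigma$ on $T(\M)$ guarantees that the $m\gtrsim d$ landmark directions span the tangent plane on scale $\delta$, forcing this singular value to be of order $\delta$; combined with the $O(\delta^2)$ landmark errors this gives $\|T_{k,j} - A^\star_{\mathrm{lin}}\|_{\mathrm{op}} = O(\delta)$. Evaluating at $|x|\le 2\delta$, this contributes $O(\delta^2)$ to $|e(x)|$, and collecting the three stages yields the required uniform bound.

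Plugging $|e(x)| \lesssim \delta^2$ (possibly with a logarithmic factor absorbing sampling concentration) back into the pointwise generator estimate, integrating over $t \in [0,\Delta t]$, and dividing by $\Delta t$ yields the claimed bound $C\delta\ln(1/\delta)\|f\|_{C^2}$, on an event of probability at least $1-4\exp(-\tau^2)$ after pooling the concentration events from the landmark stage with those controlling $\wb b$ and $\wb\sigma$.

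The main obstacle is the third stage. The pseudoinverse amplification requires a quantitative lower bound on the smallest singular value of the centered landmark matrix, and it is precisely here that uniform ellipticity of $\sigma$ on the tangent bundle enters most decisively: it guarantees that short trajectories out of each net point explore all $d$ tangent directions on comparable scales, rather than concentrating on a proper subspace. A secondary delicate point is that $|e(x)|$ must be controlled \emph{uniformly} on $B_{2\delta}(0)$, including outside the convex hull of the landmarks; here the affine (rather than merely interpolatory) structure of $S_{i,i'}$ is essential, since its extrapolation error at $|x|\le 2\delta$ is bounded by $\|T_{k,j}-A^\star_{\mathrm{lin}}\|_{\mathrm{op}}\cdot 2\delta$, which remains $O(\delta^2)$ under the singular-value bound above.
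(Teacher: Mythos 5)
Your proposal follows essentially the same path as the paper's proof: both isolate the switching-map error $e(x) := \MDS_{i'}(\MDS_i^{-1}(x)) - S_{i,i'}(x)$, observe that $\wb X^z$ and $\wt X^z$ differ only by this deterministic shift so that $\wb\L_z - \wt\L_z$ reduces to an effective drift of magnitude $|e(x)|/\Delta t$, and then bound $\sup_{|x|\le 2\delta}|e(x)|$ via the Whitney/PCA reduction, a lower bound on the smallest singular value of the centered landmark matrix (coming from uniform ellipticity plus sub-Gaussian concentration), and the least-squares optimality of the pseudoinverse solution $T_{i,j}$.

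The one place where your argument is looser than the paper's is the geometric estimate itself. You obtain $\sup_{|x|\le 2\delta}|e(x)|\lesssim\delta^2$, whereas the paper obtains $\delta^3$. The missing factor of $\delta$ does not come from the curvature of $\M$ per se, but from the near-alignment of the tangent planes at neighboring net points: the paper first establishes $\|\MDS_i-\MDS_j\|_2\lesssim\delta$, and then tests the candidate map $T=\MDS_j$ restricted to chart $C_i$, bounding the residual on landmarks by the \emph{product} $\|((A-\mu)\MDS_i-(A-\mu))(\MDS_j-\MDS_i)\|_2\lesssim\delta^2\cdot\delta=\delta^3$. Your stage one only invokes that the chart change has a curvature remainder $O(\delta^2)$, overlooking the additional $\delta$ gained from $\|\MDS_j-\MDS_i\|$. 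Because the lemma is proved with $\Delta t = \delta/\ln(1/\delta)$, your weaker $\delta^2$ still delivers the claimed $\delta\ln(1/\delta)$ bound, but with no margin; under the alternative $\Delta t\propto\delta^2$ appearing elsewhere in the paper and used in every numerical experiment, your estimate would not close while the paper's $\delta^3$ would. Finally, the remark about a stochastic drift contribution $\patht\|\wb b\|$ to the landmark chart images is a red herring: the pairs $(\MDS_i(a),\MDS_j(a))$ lie exactly on the graph of the true transition map $\MDS_j\circ\MDS_i^{-1}$ wherever the landmark $a$ lands, so randomness enters only through the smallest singular value of the centered landmark matrix, not as an additional residual term.
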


\begin{lemma} \label{lemm_Lt-Lh}
There exists a constant $C$ such that for any smooth test function $f$, and initial condition $z \in \csp$, with probability at least $1-4\exp(-\tau^2)$
\begin{align} 
\E \left[ \frac{1}{\Delta t}\dsst\int_0^{\Delta t} \big(\wh{\L}_z- \wt{\L}_z\big)f\big(\wh{X}^z_t\big) dt \right] \leq C \delta ||f||_{C^2} \label{eqn_Lt-Lh}
\end{align}
\end{lemma}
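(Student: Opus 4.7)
The central observation is that the wall function $W$ is the identity on $B_{3\delta/2}(0)$, so the two processes $\wh X_t^z = W(\wt X_t^z)$ and $\wt X_t^z$ coincide on any excursion that remains inside this ball. Consequently $(\wh \L_z - \wt \L_z)f(x) = 0$ for every $|x|\le 3\delta/2$, and the entire contribution to the integrand in \eqref{eqn_Lt-Lh} is supported on the thin annular wall region $A := \{3\delta/2 < |x| < 2\delta\}$. The plan is therefore to (a) bound $|(\wh \L_z - \wt \L_z)f|$ pointwise on $A$, and (b) bound the expected time that $\wh X_t^z$ spends in $A$ over the interval $[0,\Delta t]$.

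For (a), since $\wh X_t^z = W(\wt X_t^z)$, a direct application of It\^o's formula gives the identity $\wh \L_z f(x) = \wt \L_z(f\circ W)(W^{-1}(x))$. Expanding by the chain rule yields
\begin{align*}
(\wh \L_z - \wt \L_z)f(x) &= \bigl([DW(y)-I]\wb b_{i'} + \tfrac{1}{2}h(y)\bigr)\cdot\nabla f(x) \\
&\quad + \tfrac{1}{2}\,\mathrm{tr}\!\bigl[\bigl(DW(y)\wb\sigma_{i'}\wb\sigma_{i'}^T DW(y)^T - \wb\sigma_{i'}\wb\sigma_{i'}^T\bigr)\nabla^2 f(x)\bigr],
\end{align*}
with $y = W^{-1}(x)$ and $h_l(y) = \mathrm{tr}[\wb\sigma_{i'}\wb\sigma_{i'}^T\nabla^2 W_l(y)]$. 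Direct differentiation of \eqref{e:W} gives $\|DW\|_\infty\le 1$, $\|DW-I\|_\infty\le 1$, and $\|D^2W\|_\infty\le 2/\delta$, while the construction provides (on the high-probability event already used in Lemmas \ref{lemm_L-Lb} and \ref{lemm_Lb-Lt}) uniform bounds $|\wb b_{i'}|,\|\wb\sigma_{i'}\|\le C$. Combining these, $|(\wh \L_z - \wt \L_z)f(x)| \le C\delta^{-1}\|f\|_{C^2}$ for $x\in A$.

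For (b), the starting point $\wh X_0^z = W(S_{i,i'}(x))$ lies inside $B_{(1+O(\delta))\delta}(0)$: indeed, $i'$ is chosen so as to minimize $|x-c_{i,j}|$, the $\delta$-net property produces some neighbor $j$ with $|x-c_{i,j}|\le\delta$ (up to chart distortion), and the transition map $S_{i,i'}$ is a near-isometry by the construction analyzed in Lemma \ref{lemm_Lb-Lt}. Consequently $\wh X_0^z$ sits at distance at least, say, $\delta/4$ from $A$ for $\delta$ small enough. A standard Gaussian tail bound applied to $\wb\sigma_{i'}B_t + \wb b_{i'}t$ for $t\le \Delta t = O(\delta^2/\ln(1/\delta))$ then gives
\begin{equation*}
\P(\wh X_t^z \in A) \le 2\exp\!\bigl(-c\delta^2/(\|\wb\sigma_{i'}\|^2\Delta t)\bigr) \le \delta^{C_0},
\end{equation*}
for any prescribed $C_0$, by tuning the constant hidden in the definition of $\Delta t$.

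Combining (a) and (b) yields
\begin{equation*}
\E\!\left[\frac{1}{\Delta t}\int_0^{\Delta t}(\wh \L_z-\wt \L_z)f(\wh X_t^z)\,dt\right] \le \frac{C\|f\|_{C^2}}{\delta}\cdot \sup_{t\le\Delta t}\P(\wh X_t^z\in A) \le C\,\delta^{C_0-1}\|f\|_{C^2},
\end{equation*}
so any $C_0\ge 2$ delivers the claimed $C\delta\|f\|_{C^2}$. The main technical obstacle is step (b): one needs a quantitative, uniform-in-$z\in\csp$ lower bound on $\mathrm{dist}(\wh X_0^z, A)$, which requires a quantitative near-isometry estimate for the switching maps (available from Lemma \ref{lemm_Lb-Lt} on the high-probability event), together with a careful matching of constants between $\Delta t$ and the exponent in the Gaussian tail so that the final power of $\delta$ is at least one.
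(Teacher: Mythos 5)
Your proof is correct and follows essentially the same strategy as the paper's: you observe that $W$ is the identity on $B_{3\delta/2}(0)$ so that the generator discrepancy is supported in the thin annulus near the wall, you control the blow-up of $\|D^2W\|\sim\delta^{-1}$ there, and you show via a Gaussian tail bound (using the observation that the post-switching starting point lies within roughly $\delta$ of the chart center) that the process reaches the annulus with probability smaller than any power of $\delta$ once $\Delta t \lesssim \delta^2/\ln(1/\delta)$. The paper phrases this through explicit Itô-formula expressions for the transformed coefficients $\wh b, \wh\sigma$ and an event $E_t=\{|\wt X^z_t|>3\delta/2\}$, and then bounds $\E[\int_0^{\Delta t}|\wh b - \wt b|^2\,dt]$ and the analogous diffusion quantity; your version bounds the pointwise generator difference directly and then multiplies by $\sup_t\P(\wh X^z_t\in A)$, which is a marginally cleaner accounting but the same underlying estimate. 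One minor remark: the paper's closing line ``the result follows for $\Delta t = \delta/\ln(1/\delta)$'' appears to be a typo for $\Delta t = \delta^2/\ln(1/\delta)$ (consistent with $\Delta t=t_0/\log(1/\delta)$, $t_0=\delta^2$ stated elsewhere), and your choice $\Delta t = O(\delta^2/\ln(1/\delta))$ is the one that actually makes the exponential $\exp(-c\delta^2/\Delta t)$ beat the $\delta^{-1}$ prefactor.
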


\begin{proof}[Proof of Theorem \ref{thm_main}]
This proof follows the ideas and techniques from \cite{jonm_conv} for proving large time convergence of numerical schemes, and we refer the reader to this reference for an overview of the previous substantial work in the area. Here we adapt part of the arguments contained in that paper to our setting. By assumption, the operator $\L$ is uniformly elliptic on $\Xsp$ (for more details on how to define these operators on manifolds see \cite{stroock}, \cite{hsu}). Let $\phi:\Xsp \rightarrow \R$ be a smooth test function on $\Xsp$ and define the average $\wb \phi$ by
\begin{align}
 \wb \phi = \int_\Xsp \phi(y)q(y)dy
\end{align}
By construction $(\phi - \wb \phi) \perp $ Null($\L^*$), and by the Fredh\"olm alternative there exists a unique solution $\psi$ to the Poisson equation $\L\psi = \phi - \wb{\phi}$. 
Uniform ellipticity of $\L$ implies, via standard estimates \cite{krylov}, $||\psi||_{C^2} \le C_{\Xsp,\lambda} ||\phi||_{\infty}$.

For ease of notation, we will index everything by $k$, the step of the process $Z_k=(x_k,i_k)\in\csp$. 
Let $\wh \L_k = \wh \L_{Z_{k-1}}$, $\wh X_t^k = \wh X_t^{Z_{k-1}}$, and $\psi_k = \psi \circ \MDS_{i_k}^{-1}$. Also let $\left\{B_t^k\right\}_{k=1}^\infty$ denote independent Brownian motions. The function $\psi_k$ is smooth on $B_{2\delta}(0)$, so by It\^o's formula:
\begin{align}
 \psi_k\big(\wh{X}_{\Delta t}^k\big) - \psi_k\big(\wh{X}_{0}^k\big) = \int_0^{\Delta t} \wh{\L}_{k} \psi_k \big(\wh{X}_{t}^k\big)dt + \int_0^{\Delta t} \nabla \psi_k \big(\wh{X}_{t}^k\big) \wh{\sigma}_{i_k} dB^k_t \label{eqn_itopsi}
\end{align}
By It\^o's isometry, letting $||A(\cdot)||_{F,\infty}:=||\, ||A(x)||_F\, ||_{L^\infty(\M)}$,
\begin{align}
 \E\left[\left(\int_0^{\Delta t} \nabla \psi_k \big(\wh{X}_{t}^k\big) \wh{\sigma}_{i_k} dB^k_t\right)^2\right] \leq \Delta t ||\psi||^2_{C^1} ||\wh\sigma||_{F,\infty}^2 \label{eqn_mtbd}
\end{align}
Define the martingale $M_n$ by
\begin{align}
 M_n = \frac{1}{n \Delta t}\sum_{k=1}^{n} \int_0^{\Delta t}  \nabla \psi_k \big(\wh{X}_{t}^k\big) \wh{\sigma}_{i_k} dB^k_t
\end{align}
When calculating the variance of $M_n$, cross terms vanish by independence. Then from equation \eqref{eqn_mtbd} we obtain the bound
\begin{align}
 \E[M_n^2] \leq \frac{1}{n\Delta t} ||\psi||^2_{C^1} ||\wh\sigma||_{F,\infty}^2
\end{align}
which implies $M_n \rightarrow 0$ a.s. as $n\rightarrow \infty$ by the martingale convergence theorem. 
Summing equation \eqref{eqn_itopsi} and dividing by $n \Delta t$,
\begin{align}
 \frac{1}{n \Delta t} (\psi_k(Z_n) - \psi_k(Z_0)) = M_n + \frac{1}{n \Delta t} \sum_{k=1}^n \int_0^{\Delta t} \wh{\L}_{k} \psi_k \big(\wh{X}_{t}^k\big)dt
\end{align}
Since $\psi$ is bounded, $\psi_k(Z_n)/n \Delta t \rightarrow 0$. Taking $n \rightarrow \infty$ on both sides and using Lemma \ref{lemm_ltc},
\begin{align}
0 &= \E_\mu\left[\E\left[\frac{1}{\Delta t}\dsst\int_0^{\Delta t} \wh\L_z \psi_z \big(\wh{X}^z_t\big) dt \right]\right]\,,
\end{align}
where, if $z=(x,i)$, $\psi_z = \psi \circ \MDS_{i'}^{-1}$ with $i'$ defined as in equation \eqref{eqn_ip}. Using Lemma \ref{lemm_L-Lb}, \ref{lemm_Lb-Lt} and \ref{lemm_Lt-Lh}, we have with probability at least $1-4\exp(-\tau^2)$,
\begin{align}
 \E_\mu\left[\E\left[\frac{1}{\Delta t}\dsst\int_0^{\Delta t} \L_z \psi_z \big(\wh{X}^z_t\big) dt \right]\right] \leq c \delta \tau \ln(1/\delta) ||\phi||_\infty\
\end{align}
Using the limit definition of the generator in \cite{oksendal}, we can see that $\L_z \psi_z = \L \psi \circ \MDS_{i'}^{-1}$. Then since $\L\psi = \phi-\wb\phi$ and $\MDS_{i'}^{-1}(x)=G(z)$,
\begin{align}
 \left|\int_A \phi(G(z)) \wh q(z)dz - \int_\Xsp \phi(y) q(y)dy \right| \leq c \delta \tau \ln(1/\delta) ||\phi||_\infty\,. \label{eqn_phibd}
\end{align}
Since this holds for all smooth $\phi\in\mathbb{L}^\infty(\Xsp)$, we obtain
\begin{align}
 ||q - G_*\wh q||_{\mathbb{L}^1(\Xsp)} \leq c \delta \tau \ln(1/\delta)
\end{align}
\end{proof}

\begin{proof}[Proof of Lemma \ref{lemm_Zerg}]

Suppose first that we use the update rule starting at $Z_0 = (x_0,i_0)$
\begin{align}
 i_{k+1} &= \text{argmin}_j\big\lvert x_k+ \eta u_k-c_{i_k,j}\big\rvert \\
 x_{k+1} &= W\big(S_{i_k,i_{k+1}}(x_k) + \wb b_{i_{k+1}} \Delta t  +  \wb \sigma_{i_{k+1}} B_{\Delta t}\big)\\
 Z^\eta_{k+1} & = (x_{k+1},i_{k+1})
\end{align}
with $\{u_k\}$ being random variables drawn from Lebesgue measure on $B_{2\delta}(0)$, and $\eta>0$. Note that $\eta=0$ in the algorithm detailed in section \ref{sec_alg} and in equation \eqref{eqn_ik}. If $\eta = 0$, the process $Z^0_n$ is not Feller continuous ($\E[Z^0_1]$ does not depend continuously on the initial conditions), a common assumption towards showing that $Z_n$ has a stationary distribution. We start with fixing $\eta>0$ and later will take $\eta \rightarrow 0$ and show that the $\eta$-dependent stationary measures converge to a new stationary measure. We temporarily suspend the use of the superscript $\eta$ to simplify notation, till towards the end of the argument, when it will be relevant again when we let $\eta\rightarrow0$.

 First we show that the process $Z_n$ is Feller continuous. Let $f$ be a bounded function on $\csp$ and $(x,i) \in \csp$. Let $p_j(x,i)$ denote the probability of transitioning to chart $j$ from $i$ starting at $x$. Note that $p_j$ is continuous and bounded provided $\eta > 0$. If $n=1$,
\begin{align}
 \E_{(x,i)}[f(Z_1)] = \sum_{j\sim i} p_j(x,i) \E_{(x,i)}[f(Z_1) \vert i_1 = j]
\end{align}
Since $\wh X_t^{Z_0}$ conditioned on $i_1$ is an It\^o process, it is Feller continuous (see \cite{oksendal}). Thus $\E_{(x,i)}[f(\wh X_{\Delta t}^{Z_0},j) \vert i_1 = j]$ is continuous and bounded and therefore $\E_{(x,i)}[f(Z_1)]$ is continuous and bounded. 
By induction if $u(x,i) = \E_{(x,i)}[f(Z_n)]$ is continuous and bounded then 
\begin{align}
 \E_{(x,i)}[f(Z_{n+1})] &= \E_{(x,i)}\big[\E_{(y,j)}[f(Z_n)] \big\vert Z_1 = y,i_1=j\big]= \E_{(x,i)}[u(Z_1)]\,,
\end{align}
which is continuous and bounded. Thus by induction on $n$, $Z_n$ is Feller continuous for all $\eta > 0$.

Next we show the transition density of $Z_n$ is tight for all $n$. Fix $\eps > 0$. Let $z = Z_{n-1}$. Then $\wt X^z_{\Delta t}$ is Gaussian with mean $\wt b_z\Delta t$ and variance $\wt \sigma_z^{\phantom{l}} \wt \sigma_z^T\Delta t$ (see equation \eqref{eqn_wtX} for their definitions). sup$_{z\in \csp}\wt b_z$ and sup$_{z\in \csp}\wt \sigma_z$ are bounded. Thus there exists an $R$ such that $\P[\wt X^z_{\Delta t} \in B_R(0)] > 1-\eps$ for all $z\in \csp$, and thus $\P[Z_n \in W(B_R(0))] > 1-\eps$. $W(B_R(0))$ is compact which implies the transition density of $Z_n$ is tight.  The transition density is tight and Feller continuous, so by the Krylov-Bogolyubov Theorem \cite{kbthm} there exists an invariant measure.

Next we show that for any $(x,i) ,(y,j) \in \csp$, if $A_y$ is a neighborhood of $y$ in chart $C_j$ then $\P[Z_n \in A_y] > 0$ for large enough $n$. The $\delta -$net is a connected graph since $\M$ is connected. Thus there exists a finite length path $\{i_k\}$ with $i_k \sim i_{k-1}$, $i_0 = i$, $i_n = j$. The probability of such a path occurring is strictly positive since the probability of jumping from $i_{k-1}$ to $i_k$ is strictly positive for all $k$. The density of $Z_n$ is strictly positive on the chart $C_{i_n}$, thus $\P[Z_n \in A_y] > 0$. Because the density of $Z_n$ is positive on any open set in $\csp$ for large enough $n$, the invariant measure is unique, and thus $Z_n$ is ergodic. 

Now let $\mu_\eta$ denote the stationary measure for $Z_n^\eta$ for each $\eta > 0$. The family of measures $\mu_\eta$ is tight, and so there exists a subsequence $\{\mu_{\eta_k}\}_{k=1}^\infty$ which converges in probability to some measure $\mu$ (see \cite{prothm}). It is left to show that $\mu$ is stationary for the process $Z_n^0$.  Let $\eps > 0$ and let $f$ be a bounded function on $\csp$. Let $\mu_k = \mu_{\eta_k}$. Let $P_k$ denote the transition density for the process $Z_n^{\eta_k}$. 

For ease of notation, for functions $g,\nu$ and transition kernels $P$,
\begin{align}
g\nu &= \int_\csp g(x)\nu(x)dx \\
gP\nu &= \int_\csp \int_\csp g(x)P(x,y)\nu(y)dxdy
\end{align}
Then $|f\mu_k - f\mu| \rightarrow 0$ by the bounded convergence theorem.
\begin{align}
 |fP_0\mu - f\mu| \leq |fP_0\mu_k - fP_k\mu_k| + |fP_0\mu - fP_0 \mu_k| + |f\mu_k - f\mu|
\end{align}
The last two terms go to zero as $k \rightarrow \infty$ because $\mu_k$ converges to $\mu$ in probability and $f$ is bounded. It is left to show that $(P_0-P_k)\mu_k \rightarrow 0$. Let $E$ denote the boundary set defined by 
\begin{align*}
 E = \left\{ (x,i) : \, \exists j \text{ with } |x| = |x - c_{i,j}|\right\}
\end{align*}
The chart centers $c_{i,j}$ are a finite set and so $E$ has $\mu$ measure zero. Let $E_k$ denote the set $E$ thickened by $\eta_k$:
\begin{align*}
 E_k = \left\{ (x,i) :\, \exists j \text{ with } \big| |x| - |x - c_{i,j}| \big| < \eta_k \right\}
\end{align*}
Fix $z=(x,i) \in \csp$ and notice that the probability density starting at $z$, $P_k(z,\cdot)$, for any $k$ is of the form 
\begin{align*}
 P_k(z,\cdot)= \sum_{j\sim i} p^k_j(z) \nu_j(\cdot)
\end{align*}
 with $p^k_j(z)$ being the probability of transitioning to chart $j$ from $i$ with $\eta = \eta_k$, and $\nu_j$ independent of $k$. For any $j$, $\nu_j$ is absolutely continuous with respect to Lebesgue measure on $\R^d$, and $\nu_j(E_k) \rightarrow 0$. It follows that
\begin{align*}
 \mu_k(E_k) \leq \sup_{z\in\csp} P_k(z,E_k) \rightarrow 0.
\end{align*}
Since $P$ and $P_k$ agree on the set $E_k^c$, $|fP_k\mu_k - fP\mu| \rightarrow 0$. Thus, $fP\mu = f\mu$ for all test functions $f$, and therefore $P\mu = \mu$.
\end{proof}

\begin{proof}[Proof of Lemma \ref{lemm_ltc}]
First let
\begin{align}
 I_n = \int_0^{\Delta t} F\big(Z_n,\wh{X}_t^{Z_n}\big)dt
\end{align}
Then define a new Markov chain $Q_n = (Z_n,Z_{n+1},I_n)$. Define a family of measures $\nu$ on $\R$ by
\begin{align}
 \nu(z_1,z_2,A) = \P[I_n \in A | Z_n = z_1,Z_{n+1} = z_2]
\end{align}
Let $\gamma$ be a measure on $\csp \times \csp \times \R$ so that
\begin{align}
 \gamma(A) = \int_{\csp \times \csp \times \R} \mathbbm{1}_{A}(r) \nu(z_1,z_2,dr) P (z_1,dz_2) \mu(dz_1)
\end{align}
Where $P$ is the transition density for $Z$. Because $Z_n$ is ergodic, $P^n(\delta_{(x,i)},\cdot) \rightarrow \mu(\cdot)$ weakly. Then by the dominated convergence theorem as $n \rightarrow \infty$,
\begin{align}
 \int_{\csp \times \csp \times \R} \mathbbm{1}_{A}(r) \nu(z_1,z_2,dr) P (z_1,dz_2) P^n(\delta_{(x,i)},dz_1) \rightarrow \gamma(A)
\end{align}
The last statement shows that the density of $Q_n$ converges weakly to $\gamma$, and so $Q$ is ergodic. Pick $\phi(Q_n) = I_n$. Then by Birkhoff's ergodic theorem (see \cite{ergthry}),

\begin{align}
 \frac{1}{n} \sum_{k=1}^n \phi(Q_k) \rightarrow \int \phi d\gamma =  \E_\mu\left[\E\left[\dsst\int_0^{\Delta t} F \big(z,\wh{X}^z_t\big) dt \right]\right]
\end{align}
\end{proof}

\begin{proof}[Proof of Lemma \ref{lemm_L-Lb}]
Choose some $z=(x,i) \in \csp$ and $f \in C^2$. Then the generators $\L_z,\wb \L_z$ are given by
\begin{equation*}
\begin{aligned}
 \L_z f (y) &= \sum_j \big(b_i(y)\big)_j  \frac{\partial f}{\partial y_j}(y) + \frac{1}{2} \sum_j \sum_k \big(\sigma^{\phantom{l}}_i(y)\sigma_i^T(y)\big)_{j,k} \frac{\partial^2 f}{\partial y_j \partial y_k}(y)\\
 \wb \L_z f (y) &= \sum_j \big(\wb b_i\big)_j  \frac{\partial f}{\partial y_j}(y) + \frac{1}{2} \sum_j \sum_k \big(\wb \sigma^{\phantom{l}}_i\wb \sigma_i^T\big)_{j,k} \frac{\partial^2 f}{\partial y_j \partial y_k}(y)
\end{aligned}
\end{equation*}
It suffices to show that $b_i(y)$ is close to $\wb b_i$ and $\sigma^{\phantom{l}}_i(y)\sigma_i^T(y)$ is close to $\wb \sigma^{\phantom{l}}_i\wb \sigma_i^T$ for each $y \in B_{2\delta}(0)$ and all $i$. Let $x_k = x_{i,k}'$ be random draws from $\MDS_i(Y_{\Delta t}^{y_i})$; These are samples of $X_t$ starting at $c_{i,i} = 0$. Then as $p \rightarrow \infty$,
\begin{align*}
 t\wb{b}_i \rightarrow \E\left[X_t\right] \qquad,\qquad
 t\wb{\sigma}^{\phantom{l}}_i\wb{\sigma}_i^T \rightarrow \text{Cov}\left( X_t \right)
\end{align*}
a.s. by the strong law of large numbers. Next in order to use finite sample bounds, we show that the random variables $x_k$ are sub-gaussian with sub-gaussian norm $\patht\kappa(|\sigma|_{F,\infty} + \patht|b|_\infty) $ for some universal constant $\kappa$. To do this, we first show $Y_{\patht}$ is sub-gaussian.

Rewrite the process $Y_s$ by the definition of the It\^o integral. Here we use a uniform partition of $(0,s)$ with $n$ subintervals so $\Delta s = s/n$, $s_j = j\Delta s$, $Y_j = Y_{s_j}$ and $z_j$ are independent standard random normal vectors in $\R^d$. Then $Y_s - Y_0$ can be written
\begin{align}
 Y_s - y_0 = \dsst\lim_{n\rightarrow\infty} \frac1n\sum_{j=0}^{n-1} b(Y_j) \Delta s + \sigma(Y_j) \sqrt{\Delta s} z_j \label{eqn_yito}
\end{align}
Note that we can always think of equations \eqref{eqn_Ydef},\eqref{eqn_yito} as being in $\R^D$ by the Whitney Embedding Theorem. If one is concerned about how to make sense of equation \eqref{eqn_yito} on a manifold in $\R^D$, see \cite{hsu}. Using proposition 5.10 in \cite{ver} on the right hand side of equation \eqref{eqn_yito}, we can see that there is a universal constant $c$ so that for each $n$,
\begin{align}
 \P\left[\left|\frac{1}{n}\sum_{j=0}^{n-1} b(Y_j) \Delta s + \sigma(Y_j) \sqrt{\Delta s} z_j\right| > \alpha\right] \leq \exp\left(\frac{-c\alpha^2}{s (|\sigma|_{F,\infty} + s|b|_\infty)}\right)
\end{align}
Taking $n \rightarrow \infty$ we conclude that the sub-gaussian norm of $Y_{\patht}$ is bounded by $\sqrt{\patht\kappa(|\sigma|_{F,\infty} + \patht|b|_\infty)}$ for a universal constant $\kappa$. Then $X_{\patht}$ is also sub-gaussian with at most the same sub-gaussian norm since $\MDS_i$ is a projection. Then $|\patht\wb b_i - \patht\E[\wb b_i]|$ can be written as a sum of mean zero sub-gaussians and by \cite{ver} there exists a $c_1$ such that,
\begin{align}
 \P[\,|\wb b_i - \E[\wb b_i]| < \varepsilon] &\geq 1-e\cdot\exp\left(-c_1\varepsilon^2 p\patht\right) \label{eqn_bprob}
\end{align}
Again by \cite{ver} bounds on finite sample covariance estimation yields for some $c_2$,
\begin{align}
 \P\Big[\,\big|\big|\wb \sigma^{\phantom{l}}_i \wb\sigma_i^T - \E\left[\wb \sigma^{\phantom{l}}_i \wb\sigma_i^T\right]\big|\big|_2 < \varepsilon \big|\big|\E\left[\wb \sigma^{\phantom{l}}_i \wb\sigma_i^T\right]\big|\big|_2\Big] &\geq 1-2e^{-c_2 d\alpha^2} \label{eqn_sprob}
\end{align}
provided $p > d\alpha^2/\varepsilon^2$. Notice that $\patht$ appears in the bound in equation \eqref{eqn_bprob}, but not in equation \eqref{eqn_sprob}. This is due to the fact that for $\patht\ll 1$, the mean is much smaller than the standard deviation (and thus harder to estimate). Estimating the covariance to within accuracy $\varepsilon$ takes $\Ord(d/\varepsilon^2)$ samples, but estimating the drift to within accuracy $\varepsilon$ takes $\Ord(1/\patht\varepsilon^2)$ samples. Assuming $\patht = \delta^2 \ll  1/d$, the mean will be more difficult to estimate. For simplicity we will assume that the covariance has the same bound as the drift.

Next we must ensure that the probabilistic bound holds for all indices $i \in \Gamma$. Since the volume of $\Xsp$ is fixed, $|\Gamma| = c_3 (1/\delta)^d$ for some $c_3$. 
Next set the accuracy to $\eps = \delta \ln(1/\delta)$, and the confidence $\tau^2 = c_1\varepsilon^2 p\patht - (1 + \ln(c_3) + d\ln(1/d))$. When we take a union bound over $i \in \Gamma$ we have with probability at least $1-2e^{-\tau^2}$,

\begin{align}
  |\wb b_i - &\E[\wb b_i]| < \delta \ln(1/\delta) \text{ and } \big|\big|\wb \sigma^{\phantom{l}}_i \wb\sigma_i^T - \E\left[\wb \sigma^{\phantom{l}}_i \wb\sigma_i^T\right]\big|\big|_2 < \delta \ln(1/\delta) \label{eqn_concineq} \\
 & \text{ if } p > \dsst\frac{c_1}{\patht\delta^2}\left( \frac{\tau^2 + 1 + \ln(c_3)}{\ln(1/\delta)^2} + \frac{d}{\ln(1/\delta)}\right) \label{eqn_pbd} 
\end{align}
We can think of equation \eqref{eqn_pbd} as telling us $p > c_4/\delta^4$ (up to $\ln(1/\delta)$ factors) since $\patht = \delta^2$ and $\tau,d,\ln(1/\delta)$ all behave like $\Ord(1)$ constants.

Since $\MDS_i$ is smooth, $b_i, \sigma_i$ are Lipschitz and bounded because $b,\sigma$ are Lipschitz and bounded by some constant $M$. By the Cauchy-Schwarz inequality and It\^o's isometry,
\begin{align}
 \E[|X_\patht|^2] \leq M^2\patht  + \Ord(\patht^{3/2}) \qquad,\qquad
 \E\left[\int_0^\patht|X_s|^2ds\right] \leq \frac12M^2\patht^2  + \Ord(\patht^{5/2}) \label{eqn_EX}
\end{align}
Let $A = \int_0^\patht b_i(X_s) -b_i(0) ds$ and $B = \int_0^\patht \sigma_i(X_s)dB_s$. Then by Jensen's inequality,
\begin{align}
 |\E[X_\patht]-\patht b(0)|^2 \leq \E[|A|^2] &\leq \E\left[\patht\int_0^\patht |b_i(X_s) - b_i(0)|^2 ds\right] \leq \frac{1}{2}C^2M^2\patht^3 + \Ord(\patht^{7/2}) \label{eqn_Abd}
\end{align}
Dividing by $\patht$ and taking a square root,
\begin{align}
 |\E[\wb b_i] - b_i(0)| \leq \sqrt{\frac{\patht}{2}}MC + \Ord(\patht^{3/4}) \label{eqn_b0bd}
\end{align}
By It\^o's isometry we have
\begin{align}
\E[|B|^2] & \leq M^2\patht  
\label{eqn_Bbd}
\end{align}
Combining equations \eqref{eqn_Bbd} and \eqref{eqn_Abd},
\begin{align}
 |\Cov(A + B) - \Cov(B)|_2 \leq \E[|A|^2] + 2\E[|A|^2]^{1/2} \E[|B|^2]^{1/2} 
 \leq \sqrt{2}CM^2\patht^2  + \Ord(\patht^{9/4}) \label{eqn_CB1}
\end{align}
Using It\^o isometry and the Lipschitz condition on $\sigma_i$, 
\begin{align}
 |\Cov(B) - \patht\sigma^{\phantom{l}}_i(0)\sigma_i^T(0)|_2 
&= \Bigg\vert \E\Bigg[ \left(\int_0^\patht \sigma_i(X_s) - \sigma_i(0) dB_s \right) \left(\int_0^\patht \sigma_i(X_s) dB_s \right)^T  \\
& \phantom{space} + \left(\int_0^\patht \sigma_i(0) dB_s \right)\left(\int_0^\patht \sigma_i(X_s) - \sigma_i(0) dB_s \right)^T    \Bigg] \Bigg\vert \\
& \leq \sqrt{2}KM\patht^{3/2} + \Ord(\patht^{7/4}) \label{eqn_CB2}
\end{align}
Combining equations \eqref{eqn_b0bd} with the concentration inequality \eqref{eqn_concineq} along with $\patht = \delta^2$ implies for some $c_5$ with probability at least $1-2e^{-\tau^2}$,
\begin{align}
 |\wb b_i - b_i(0) | \leq c_5 \delta \ln(1/\delta)
\end{align}

Finally, combining equations \eqref{eqn_CB1}, \eqref{eqn_CB2} with the concentration inequality \eqref{eqn_concineq} yields that for some constant $c_6$ with probability $1-2e^{-\tau^2}$,
\begin{align}
 \big|\big|\wb \sigma^{\phantom{l}}_i \wb\sigma_i^T - \sigma^{\phantom{l}}_i(0) \sigma_i^T(0) \big|\big|_2 \leq c_6 \delta \ln(1/\delta)
\end{align}
The Lipschitz conditions on $b_i$ and $\sigma_i$ yield the result.

\end{proof}

\begin{proof}[Proof of Lemma \ref{lemm_Lb-Lt}]
Fix a starting location $z = (x,i) \in \csp$. We can write an SDE for $\wt X^z_t$ starting at $\MDS_{i'}\left(\MDS_{i}^{-1}\left(x\right)\right)$ in the next chart $i'$ by 
\begin{align}
 d\wt X^z_t &= \wt b_z dt + \wt \sigma_z dB_t \label{eqn_wtX} \\
\wt b_z &= \frac{1}{\Delta t} \big(S_{i,i'}(x) - \MDS_{i'}\left(\MDS_{i}^{-1}\left(x\right)\right)\big) + \wb b_i \notag \\
\wt \sigma_z &= \wb \sigma_{i'} \notag
\end{align}
Writing this equation in this form spreads the transition error out over the course of one timestep of length $\Delta t$. Thus, proving $\wt \L_z$ is close to $\wb \L_z$ reduces to showing that the transition error is sufficiently small after dividing by $\Delta t$ (so that it can be combined in the drift term). Allowing the transition error to affect the drift forces us to have the drift $\wt b$ (and thus $\wt \L$) depend on the starting location $z$.

By the Whitney embedding theorem, $\M$ can be smoothly embedded into $\R^{D}$ for $D \geq 2d$. In $\R^{D}$, the LMDS mapping $\MDS_i$ reduces to principal component analysis, which is simply a projection onto the top $d$ eigenvectors of the covariance matrix of the local landmarks. Thus we can think of $\MDS_i$ as a matrix acting on vectors. To be consistent with the algorithm, vectors will be written as row vectors and the matrix $\MDS_i$ will act on the left. 

Fix $k \in \Gamma$. Let $\Pi_k \in \R^{D\times d}$ denote the projection from $\Xsp$ onto $T_{y_k}$, the tangent plane of $\Xsp$ at $y_k$. Then $\Pi_k$ is invertible on a ball of radius $2\delta$ on $T_{y_k}$ by assumption. Also since $\Xsp$ is smooth, Taylor's theorem tells us that for some $c_1$ and all $x\in \Xsp$ near $y_k$,
\begin{align}
 |x\Pi_k - x| \leq c_1|x-y_k|^2
\end{align}
 Let $L_k = \{l_{k,i}\}$ denote the collection of landmarks associated with the neighbors of $y_k$, and $\mu_k$ denote their mean. The matrix $\MDS_k$ minimizes the squared error on the landmarks given by:
\begin{align}
 \sum_i |l_{k,i} \MDS_k - l_{k,i}|^2 \label{eqn_mdsmin}
\end{align}

Inserting $\Pi_k$ in place of $\Phi_k$ into equation \eqref{eqn_mdsmin} yields a bound of $c^2\delta^4$. The landmarks are well spread through the space by construction and the ellipticity condition. 
Then with high probability (in fact, at least $1-e^{-\tau^2}$ by \cite{Vershynin:NARMT}, since $p\gtrsim d\log d$) $\wt L = (L-\mu)\Pi = \{\wt l_i\}$ must have smallest singular value at least $\delta$, and thus any vector $v$ in the tangent plane can be written $v = a\wt L$ with $a = v \wt L ^\dagger$. The bound on the singular values implies $|a| \leq \delta^{-1} |v|$.  Then using Cauchy-Schwarz,
\begin{align}
 |v\MDS_k - v| \leq |v|c_1\delta\,, \label{eqn_mdsprop1}
\end{align}
which implies, since $\MDS_k$, $\Pi_k$ are projections, $||\MDS_k - \Pi_k||_2 \leq c_1\delta$. Let $j \in \Gamma$ such that $j\sim k$. By a Taylor expansion, $||\Pi_k - \Pi_j|| \leq  c_2\delta$ for some $c_2$. Thus there exists a constant $c_3$ such that

\begin{align}
 ||\MDS_k - \MDS_j||_2 \leq c_3 \delta \label{eqn_mdsprop2}
\end{align}

The properties \eqref{eqn_mdsprop1} \eqref{eqn_mdsprop2} allow us to treat $\MDS_k$ like $\Pi_k$, the projection onto the tangent plane. Also since $||\MDS_k - \Pi_k||_2 \leq c_1\delta$ and $\delta \ll  1$, $\MDS_k$ will be invertible whenever $\Pi_k$ is since $\Pi_k$ has singular values equal to $1$.

Next let $A = A_{k,j} = \{a_{k,i}\}\cup \{a_{j,i}\}$ be the collection of landmarks common to $L_k$ and $L_j$. Let $\mu = \mu_{k,j}$ be the mean of $A$. Now we can write $T_{i,j}$ as
\begin{align}
T_{i,j} = [(A - \mu)\MDS_i]^\dagger(A-\mu)\MDS_j
\end{align}
By definition of the pseudoinverse, $T_{i,j}$ minimizes
\begin{align}
||(A-\mu)\MDS_i T - (A-\mu)\MDS_j||_2   \label{Amin}
\end{align}
over all choices of $T \in \R^{d \times d}$. Choose $T$ to be the restriction of $\MDS_j$ onto chart $C_i$. Then $T \in \R^d \times \R^d$ and 
\begin{align}
||(A-\mu)\MDS_i T - (A-\mu)\MDS_j||_2   
& = ||((A - \mu)\MDS_i - (A - \mu))(\MDS_j - \MDS_i)||
\leq c_1 c_2\delta^3
\end{align}
Since $T$ is a possible choice for $T_{i,j}$,
\begin{align}
||(A-\mu)\MDS_i T_{i,j} - (A-\mu)\MDS_j||_2 \leq c_1 c_3\delta^3
\end{align}
The matrix of landmarks $(A-\mu)$ spans the chart $C_i$, so there is a constant $c_4$ such that for any $x$ in the chart $C_i$,
\begin{align}
 |S_{i,j}(x) - \MDS_j(\MDS_i^{-1}(x))| \leq c_4 \delta^3 \label{eqn_switchbd}
\end{align}

Using $\Delta t = \delta / \ln(1/\delta)$ and equation \eqref{eqn_switchbd}, the result follows.
\end{proof}

\begin{proof}[Proof of Lemma \ref{lemm_Lt-Lh}]
Fix a starting location $z = (x,i) \in \csp$. Then the process $\wt X_t^z$ is the solution of an SDE on chart $i'$ with smooth coefficients. Thus, $\wh X_t^z = W\big(\wt X_t^z\big)$ is also the solution of an SDE on chart $i'$ with smooth coefficients:
\begin{align}
 d\wh X_t &= \wh b(z,\wh X^z_t) dt + \wh \sigma(z,\wh X^z_t) dB_t
\end{align}
Using It\^o's formula on $W(\wt X)$,
\begin{align}
 \wh b_j (z,\wh X^z_t) &= \sum_k \frac{\partial W_j}{\partial x_k}(\wt X^z_t) \wt b_k(z) + \frac12 \sum_k \sum_l \frac{\partial^2 W_j}{\partial x_k \partial x_l}(\wt X^z_t) (\wt \sigma \wt \sigma^T)_{k,l}(z) \\
\wh \sigma_{j,l}(z,\wh X^z_t) &= \sum_k \frac{\partial W_j}{\partial x_k} \wt \sigma_{k,l}(z) \label{eqn_whsdef}
\end{align}
with $\wt b(z)=\wt b_z,\wt \sigma(z) = \wt \sigma_z$ defined as in the proof of Lemma \ref{lemm_Lb-Lt}. Note that since $W$ is invertible, we could replace $\wt X^z_t$ with $W^{-1}(\wh X^z_t)$ so that $\wh b,\wh \sigma$ can be thought of as a function of $z$ and $\wh X^z_t$. 
Direct computation shows that for some $c_1,c_2$,
\begin{align*}
 \sum_k \left(\frac{\partial W_j}{\partial x_k}(\wt X_t)\right)^2 &\leq c_1\qquad,\qquad
 \sum_k \sum_l \left(\frac{\partial^2 W_j}{\partial x_k \partial x_l}(\wt X_t)\right)^2 \leq \frac{c_2}{\delta^2}
\end{align*}
Let $E_t$ denote the set $\left\{t :  |\wt X^z_t| > \dsst\frac{3\delta}{2}\right\}$. By definition of $W$, $\wh b$ and $\wt b$ agree on $E_t^c$. 
The index $i'$ is chosen so that $|x-c_{i,i'}| < \delta$. As the switching map $S_{i,i'}$ makes error $\Ord(\delta^3)$ by equation \ref{eqn_switchbd}, $|\wt X^z_0| \leq \delta + \Ord(\delta^3)$. In order that $|\wt X^z_t|>3\delta/2$, the Brownian motion must push the process at least $\Ord(\delta)$ in time $t$. In other words there are constants $c_3,c_4$ such that,
\begin{align}
\P[E_t] \leq \P[|B_t| > c_3\delta] \leq \exp(-c_4\delta^2/4t) \label{eqn_Etbd}
\end{align}
 Next we can bound the effect of the boundary function $W$ on the drift and diffusion terms:
\begin{align*}
 \E\left[ \int_0^{\Delta t} \left|\wh{b} - \wt{b}\right|^2\big(z,\wh{X}^z_t\big)dt \right] &= \E\left[ \int_0^{\Delta t} \mathbbm{1}_{E_t}\left|\wh{b} - \wt{b}\right|^2\big(z,\wh{X}^z_t\big)dt \right]
\leq \frac{c_5 \Delta t}{\delta^2} \exp\left(-c_4 \frac{\delta^2}{\Delta t} \right)
\end{align*}
for some new constant $c_5$. By equations \eqref{eqn_whsdef}, \eqref{eqn_Etbd} and the fact that $\wh \sigma$ agrees with $\wt \sigma$ on $E_t^c$,
\begin{align}
 \E\left[\int_0^{\Delta t}\left|\left|\wh{\sigma}\wh{\sigma}^T - \wt{\sigma}\wt{\sigma}^T\right|\right|^2_F \big(z,\wh{X}^z_t\big) dt\right]
\leq \Delta t c_6\exp\left(-c_4 \frac{\delta^2}{\Delta t} \right)
\end{align}
The result follows for $\Delta t = \delta/\ln(1/\delta)$.
\end{proof}


\section{
Examples
} \label{sec_exs}

\subsection{Simulator Comparison} \label{sec_simcomp}

\subsubsection{Multiscale Transition Probability Comparisons} \label{sec_probcomp}
In order to see how well the \ls\ works we will need to have a criterion for comparing simulators. Since we are interested in the behavior of the system over multiple timescales, we will simulate 10,000 paths from each simulator and record the positions at times $\{t_k:=2^k\}$. The smallest time scale will be at the size of one step of the original simulator and the largest time scale will be at some time $T$ (example dependent) at which point systems have reached equilibrium.

\begin{figure}[hbp]
\begin{center}
\centering\includegraphics[width=6in]{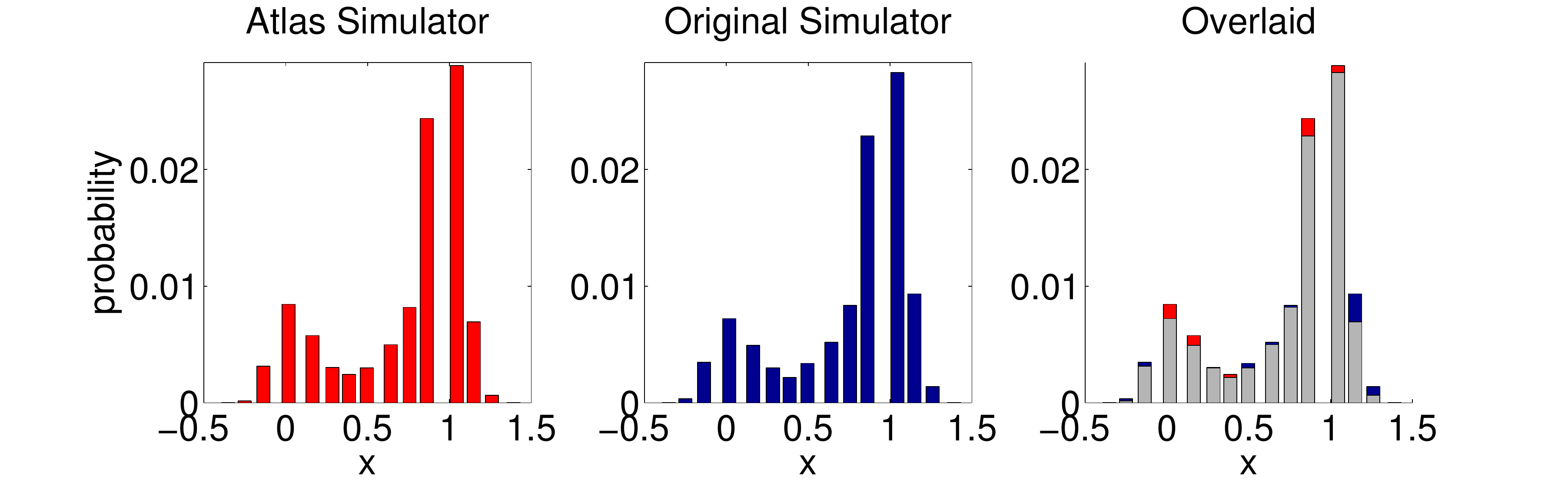}
\caption[Comparing bar graphs for transition probabilities.]{Comparing distributions obtained from two simulators at time $T=0.2$ (orginal and \ls) in example \ref{ex_dwwr}.}\label{fig_bar1}
\end{center}
\end{figure}

\begin{figure}[hbp]
\begin{center}
\centering\includegraphics[width=6in]{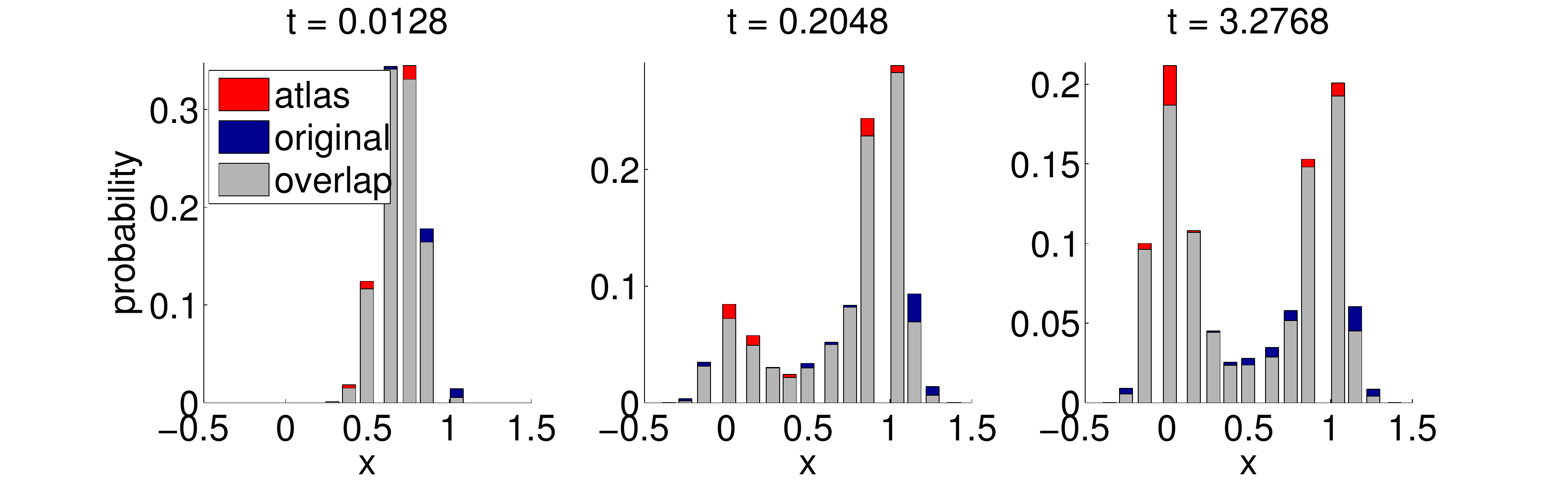}
\caption[Comparing multiple bar graphs for multiscale times.]{Comparing overlaid distributions obtained from two simulators at some multiscale times $2^k$ (originally and \ls) in example \ref{ex_dwwr}.}\label{fig_bar2}
\end{center}
\end{figure}

In order to understand motivation for how to compare simulators, we start with a 1-d example. For a fixed $t_j \leq T$, we can bin samples into equal spaced bins, and compare them. Next we would like to compare the probabilities of landing in each bin as in figure \ref{fig_bar1} by overlaying them. We can next vary $k$ (and thus $t_k$) to obtain overlaid bar graphs for multiple time scales as in figure \ref{fig_bar2}. We see that the real quantity of interest is the difference between these two histograms, and we will sum the absolute values of their difference to approximate the $L^1$ distance between the measures these histograms represent. 

Our next goal is to generalize this to high dimensional spaces. Here the bins we use can be given naturally by the \ls\ we construct. Instead of using a ``hard'' binning procedure by assigning each point to the closest bin, we will assign smooth weights to the nearest neighbors. This smooth binning procedure will help to wash out the small scale errors we make, so that we can measure the large scale errors.

The first step is to explain the smooth map which takes a distribution $\nu$ on $\{x_i\}_{i=1}^n$ to a distribution $\mu$ on a set $\{y_j\}$. We think of $\{y_j\}$ as a coarser binning of the distribution $\nu$ on $\{x_i\}$. First assign weights $w_{i,j}$ to each $(x_i,y_j)$ pair given by:
\begin{displaymath}
   w_{ij} = \left\{
     \begin{array}{cr}
       \exp\left(\frac{-|x_i - y_j|^2}{\delta^2}\right) &  |y_i - x_j| < 2\delta \\
 & \\
       0 &  \text{otherwise}
     \end{array}
   \right. \label{eqn_weights1}
\end{displaymath} 
Then we normalize the weights so that they sum to $1$ when summed over $j$.
\begin{align}
 \mu_j = \sum_i  \nu_i \frac{w_{i,j}}{\sum_j w_{i,j}} \label{eqn_weights2}
\end{align}

Fix a time slice $t_k$, then assign equal weights $\nu_i = 1/n$ to the set of samples $\{x_i\}_{i=1}^n$ given by the original simulator, and map them to a distribution $\mu$ on the net $\Gamma$ using \eqref{eqn_weights1},\eqref{eqn_weights2} and the distance function in the ambient space. Next we will assign equal weights to the samples $\{\wh x_i\}_{i=1}^n$ from the \ls\ and map them to weights $\wh \mu$ on the net $\Gamma$ using the euclidean chart distances. 

Once we have $\mu$, $\wh \mu$, we could compare them directly. However, we know that the \ls\ makes errors on this spatial scale, and so we would like to smooth these distributions out to a coarser net with $\delta_c \geq \delta$. This will also allow us to compare simulators with varying $\delta$ while keeping the number of bins fixed. For each example, we will fix a coarse grained $\delta_c$ equal to the largest $\delta$ used for that example, and obtain a net $\{z_l\}$. Then we can push $\mu$, $\wh \mu$ to distributions $p$, $\wh p$ on $\{z_l\}$ again using \eqref{eqn_weights1},\eqref{eqn_weights2} and the distance function in the ambient space. This gives us two probability distributions, one for each simulator, at time $t_k$ on the coarse net.

Given a single initial condition, we will calculate the $L^1$ distance between $p$ and $\wh p$ for each time slice $t_k$. Then we will repeat this procedure for 10 fixed initial conditions (randomly chosen) to compare the transition densities over a wide range of time scales and initial conditions. In examples where only one \ls\ is used, we plot one thin colored line for each initial condition, then a thick line representing the mean $\pm$ one standard deviation (see figures \ref{fig_comp_dwwr}, \ref{fig_comp_twwr}, \ref{fig_comp_twimg_wr}, \ref{fig_comp_fcn_dim}). In examples where we compare many \ls\ simulators, we plot only the thick line representing the mean $\pm$ one standard deviation (see figures \ref{fig_comp_dwnr}, \ref{fig_comp_twnr}, \ref{fig_comp_twimg_nr}, \ref{fig_comp_fcn}).

\subsubsection{Transition Time Comparisons}
Another quantity of interest in many stochastic dynamical systems are (expected) transition times between metastable states. Metastable states are subsets of the state space where the system spends a significant amount of time before escaping. An example includes a region near at the bottom of either well of the potential function in the example considered in section \ref{ex_onedim}, and discussed in detail below. These transitions between a metastable state (set) and another are often one of the most important characteristics of large-time dynamics in the systems we consider. The expected time between these transitions, i.e. the average time spent in a metastable state before jumping to another one - is a fundamental statistic of the system, and is a function of two given metastable states (sets).

Therefore another way to compare the \ls\ with the original simulator is to compare these expected transition times. We start by running 12 extremely long paths (100 times longer than those run in \ref{sec_probcomp}) from both simulators; these paths have on the order of thousands of transitions. Next we identify regions of interest, which requires knowledge of the problem ahead of time. Once this is done we can classify points in the original state space as belonging to region 1, region 2, region 3 (if there are three states), or none. Then all points in the long simulations are classified. Last, we scan through the list and calculate the transition times. Calculating transition times is best explained through an example. Suppose our simulation now looks like:

\begin{align}
 0, 1, 1, 0, 0, 1, 0, 2, 0, 3, 3, 0, 1
\end{align}

Denote a transition time from region $i$ to region $j$ by $\tau_{i,j}$. Start by skipping to the first time the simulator enters a region (region 1 in this case). Then it takes 6 timesteps to reach another region, region 2. Count this as a sample of $\tau_{1,2}$ equal to $6$ timesteps. Next it takes $2$ steps to go from region 2 to region 3, so count this as a sample of $\tau_{2,3}$ equal to 2 timesteps. Next we are in region 3, and it takes 3 steps before reaching region 1, so count this as a sample of $\tau_{3,1}$ equal to 3 timesteps. There are no samples of $\tau_{1,3}$ or $\tau_{3,2}$. The long paths we calculate will have many such samples, and we can average the value of these samples, and also average over the 12 paths we have run. We can then plot estimated values for $\E[\tau_{i,j}]$, the expected transition times for the original simulator, and estimated values of $\E[\wh \tau_{i,j}]$, the expected transition times for the \ls.

\subsection{One Dimensional Example}

\subsubsection{Smooth Potential} \label{ex_dwnr}
The first example presented is a simple one dimensional two-well example.  We will use the potential
\begin{align*}
 U_1(x) = 16x^2(x-1)^2
\end{align*}
and use a simulator which approximates
\begin{align*}
 dX_t = -\nabla U_1(X_t) dt + dB_t
\end{align*}
using an Euler-Maruyama scheme which takes timesteps of size 0.005. A sample path of this system is shown in figure \ref{fig_dwtraj}. The initial point set we use to generate the $\delta$-net is linearly spaced points with spacing 0.01. It is important to note that the distribution of the initial point set does not play an important role in the resulting \ls. The \ls\ algorithm performs equally well on any initial point set that has no holes of size order $\delta$. We subsample this initial point set to obtain a $\delta$-net with $\delta$ parameter 0.1 using the brute force method described in section \ref{sec_dnet}.
 
\begin{figure}[tbp]
\centering\includegraphics[width=5.5in]{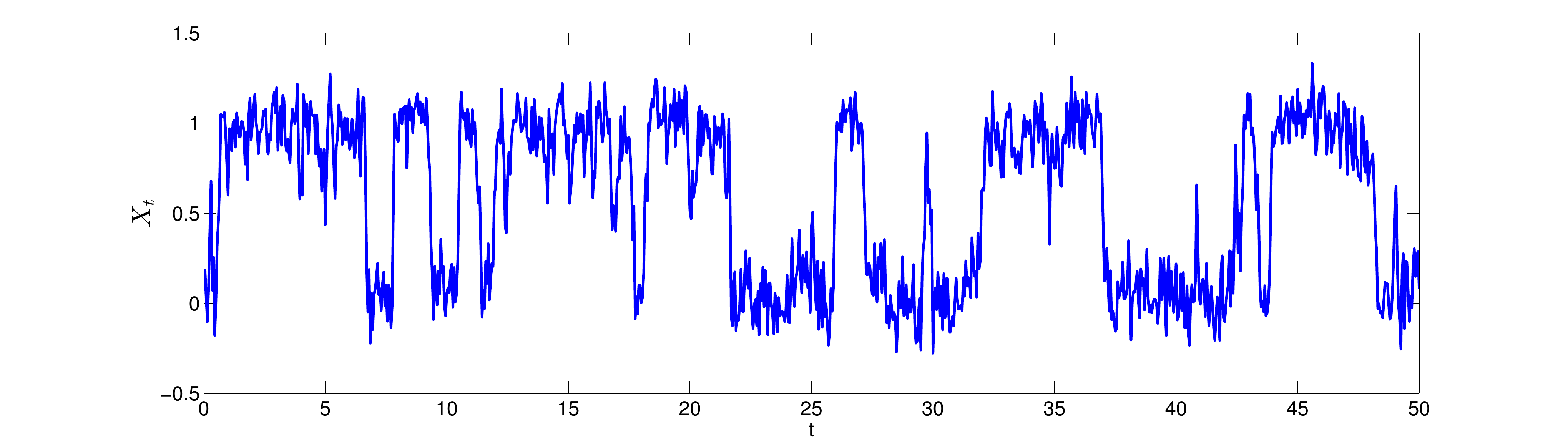}
\caption{Sample trajectory of $X_t$ for the two well example \ref{ex_dwnr}.}\label{fig_dwtraj}
\end{figure}

Once we run the \ls\ algorithm in this case, it is simple to map estimated drift vectors from the chart coordinates back to the original space. In general for an arbitrary metric space this is a hard problem, but in 1-d we need only multiply by $\pm1$ to undo MDS. In 1-d, the estimated drift vectors can easily be integrated to obtain an effective potential $\wh{U}$ for the system. We can also bring back the diffusion coefficients and see how they compare to the truth. Inverting MDS and comparing the coefficients we obtain with the true coefficients of the underlying system is a procedure we will only be able to do for this 1-d system, but it gives interesting insight into the working of the homogenizing nature of the \ls.

\begin{center}
\begin{figure}[tbp]
\centering\includegraphics[width=5.5in]{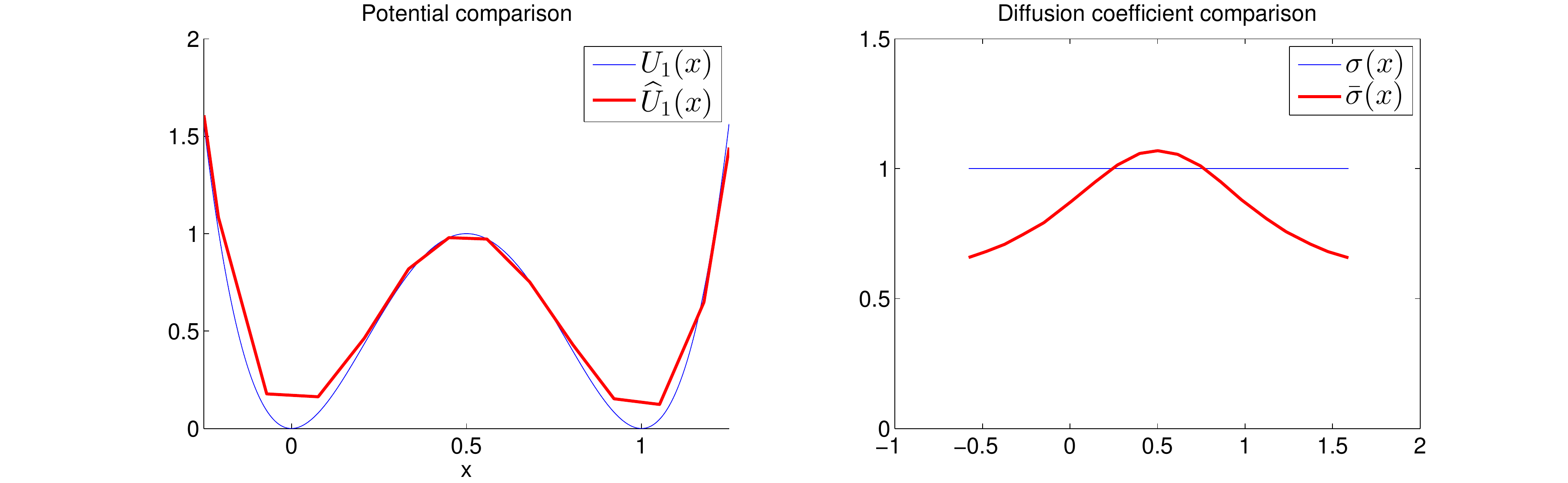}
\caption[Comparing original potential and diffusion with that of the \ls in example \ref{ex_dwnr}.]{Left: original potential $U$ (shown in blue) and effective potential of the \ls\ $\wh{U}$ (shown in red). Right: comparing original diffusion coefficient (blue) with that of the \ls (red) with $\delta = 0.1$ in example \ref{ex_dwnr}.}\label{fig_dwpot}
\end{figure}
\end{center}
See Figure \ref{fig_dwpot} showing the resulting comparisons between drift and diffusion coefficients. 

Next we generate four nets (and four \ls\ simulators) with $\delta$ values $0.05$, $0.10$, $0.15$ and $0.20$ by subsampling from a fine mesh. In each example we have used $p = 10,000$ simulations per net point, and $\patht=\delta^2$. The number of landmarks is irrelevant because as long as $m \geq 1$, there will be enough landmarks to exactly recover the local space. When simulating, we set the simulation time step $\Delta t = \delta^2/5$. Then for each of $10$ randomly chosen staring locations, we run $10,000$ long paths up to time $T = 50$. Using the simulator comparison method from section \ref{sec_simcomp}, we obtain figure \ref{fig_comp_dwnr}.
\begin{figure}[tbp]
\centering\includegraphics[width=5.5in]{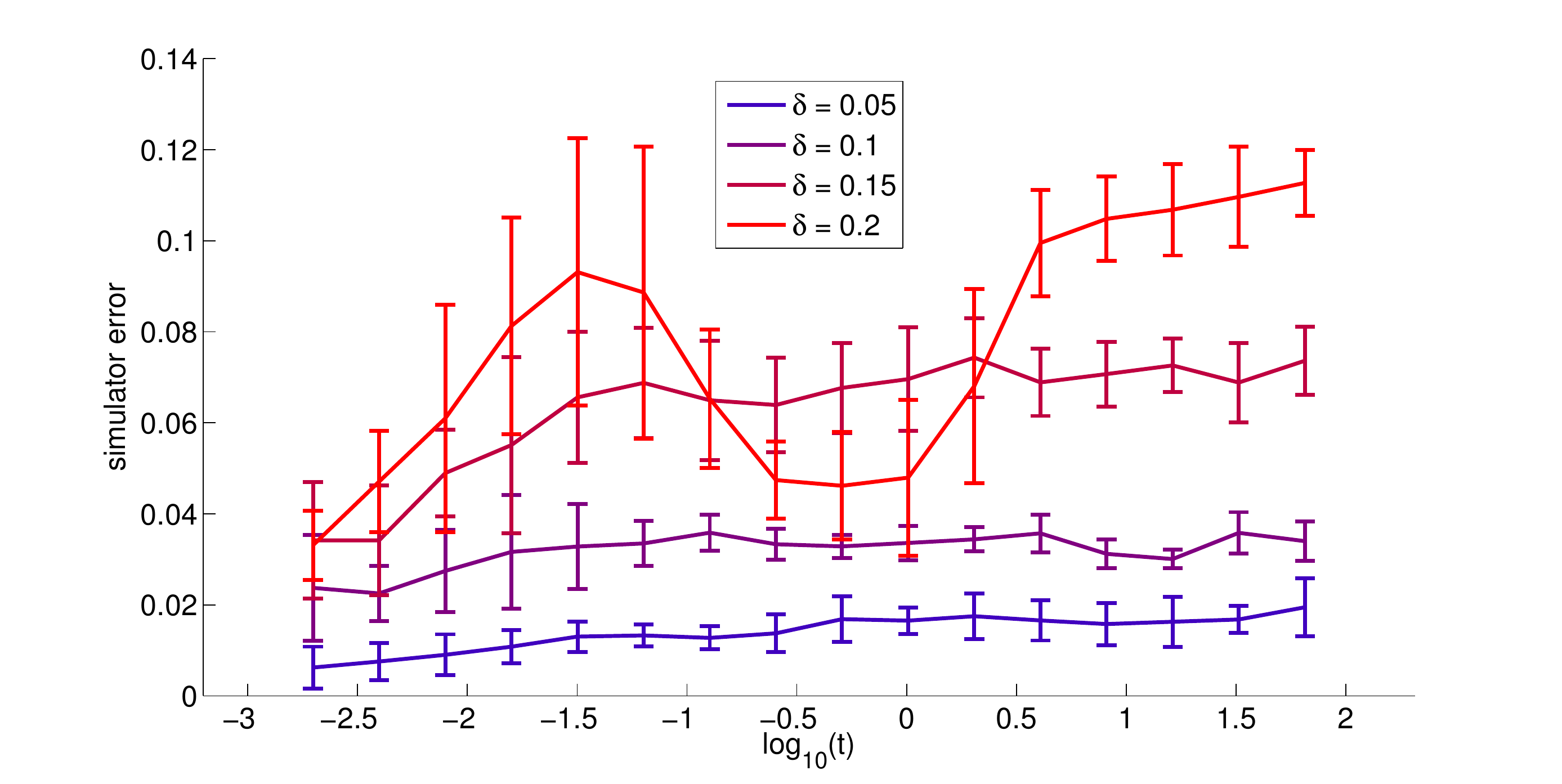}
\caption{Simulator comparison for example \ref{ex_dwnr}. Each line represents the average simulator error for a single net of the specified $\delta$ value.}\label{fig_comp_dwnr}
\end{figure} 
As we expect from theorem \ref{thm_main}, the long time error is decreasing as $\delta$ decreases. Figure \ref{fig_comp_dwnr} shows that the transition kernels are close for all time scales, which is a stronger experimental result than given by theorem \ref{thm_main}. Theorem \ref{thm_main} only tells us that the stationary distributions are $\Ord(\delta \log(1/\delta))$ far from each other. We can also compare the rates directly as seen in figure \ref{fig_rates_dwnr}.

\begin{center}
\begin{figure}[tbp]
\centering\includegraphics[width=5.5in]{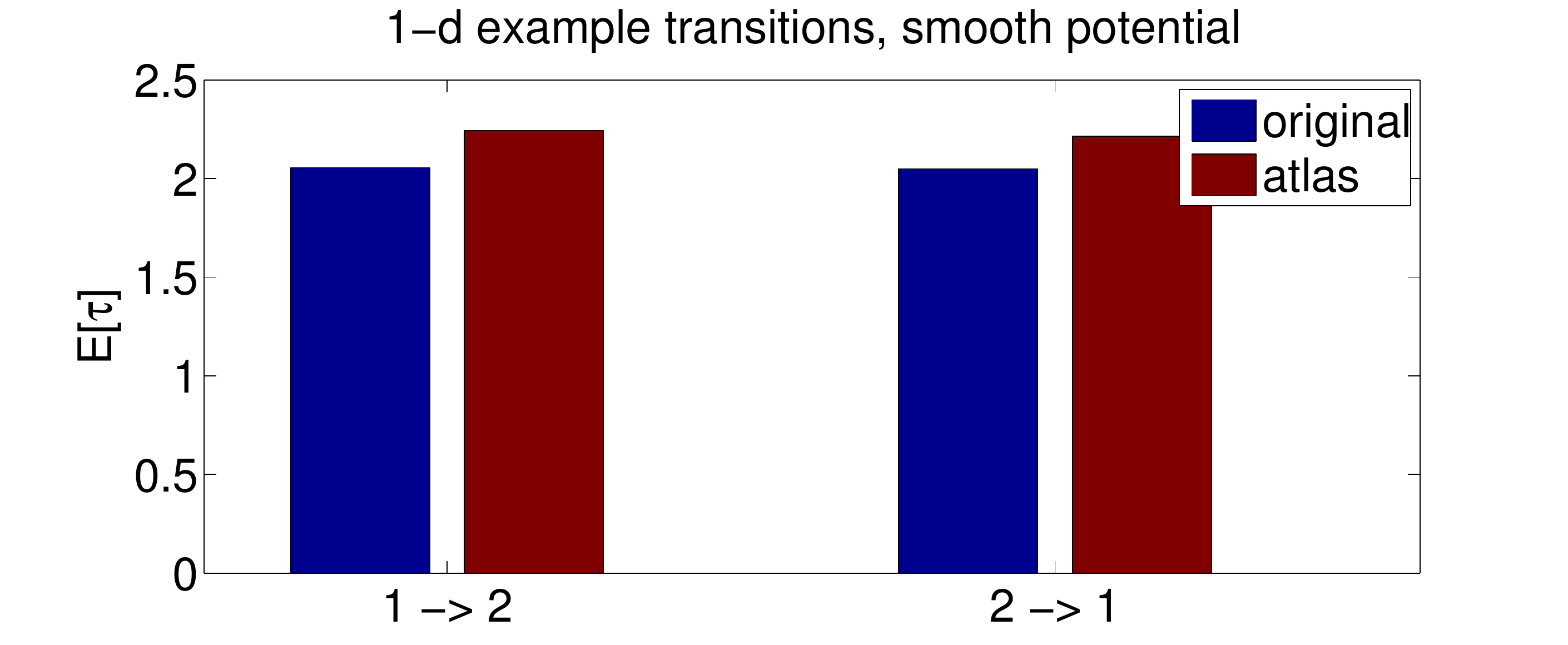}
\caption{Comparing transition times in example \ref{ex_dwnr}. Region 1 is $\{x:|x|<1/4\}$, and region 2 is $\{x:|x-1|<1/4\}$.}\label{fig_rates_dwnr}
\end{figure}
\end{center}

\subsubsection{Rough Potential} \label{ex_dwwr}
In order to make a more interesting example, we add high frequency ridges to the potential well to emulate microscale interactions. This example is a case where it is of interest to approximate a homogenized system which behaves like the original system above a certain temporal/spatial scale. Define  
\begin{align}
 V_1(x) = U_1(x) + \frac16\cos(100\pi x)
\end{align}
where $U_1(x)$ is defined in example \ref{ex_dwnr}. For our initial point set, we could again use evenly spaced grid points as in example \ref{ex_dwnr}. Since one might wonder if this is a ``fair'' input we run each grid point through the simulator for a small time $t=0.01$ to obtain our initial point set. As long as these points have no holes of size order $\delta$, the \ls\ will return a robust result with high probability.

Even though the new potential well is infinitely differentiable, the Lipschitz constant of the drift in this example is 625. In order to accurately simulate Brownian motion in this potential well, we decrease the time step to $0.00005$. These microscale interactions are determining our timestep, and thus becoming a bottleneck for running long time simulations.

If we were to apply theorem \ref{thm_main} directly to this example, it will guarantee a relatively useless error bound on the stationary distribution (since the error bound depends on the Lipschitz constant). Instead, the way we think of theorem \ref{thm_main} applying to this problem is that there is a time scale $\patht$ at which the system with potential well $V_1$ behaves like a homogenized version with smooth potential and small Lipschitz constant. Multiscale systems of this form have been studied (see \cite{pavliotis2007parameter} and references therein), and it is known that such systems behave like an SDE with smooth parameters at a large scale. If we only observe samples at time $\patht$, then we can pretend our samples come from the homogenized system rather than the microscale simulator. 

\begin{figure}[tbp]
\centering
\centering\includegraphics[width=6in]{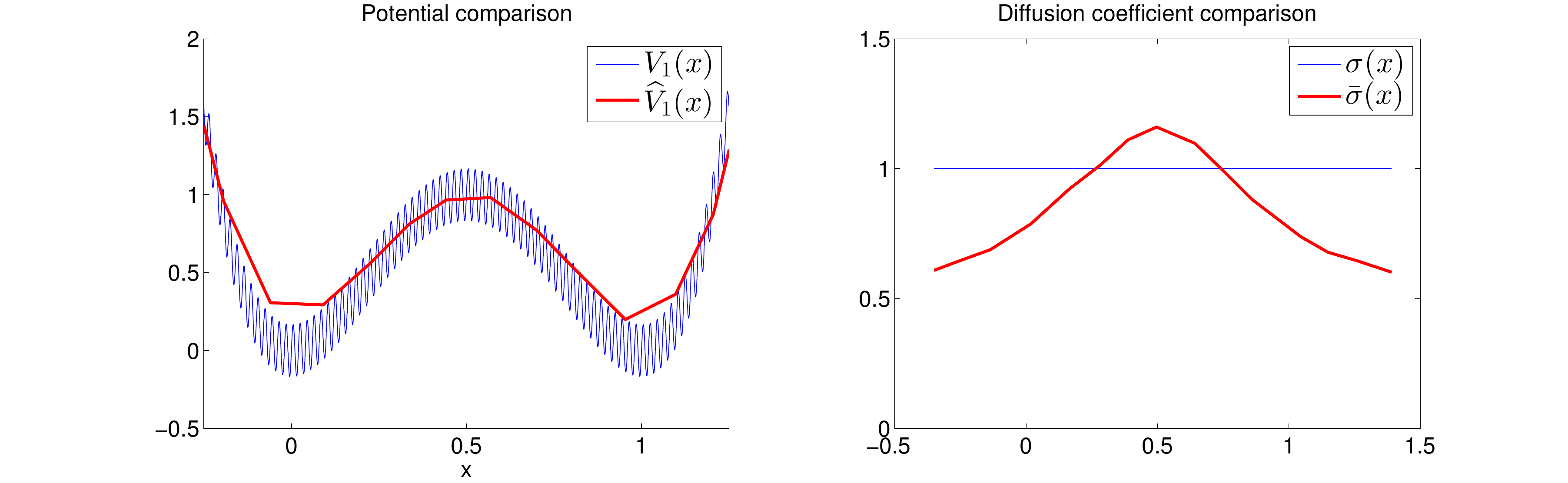}
\caption[Comparing original potential and diffusion with that of the \ls in example \ref{ex_dwwr}.]{Left: original potential $U$ (shown in blue) and effective potential of the \ls\ $\wh{U}$ (shown in red). Right: comparing original diffusion coefficient (blue) with that of the \ls (red) with $\delta = 0.1$ in example \ref{ex_dwwr}.}\label{fig_rpdw}
\label{f:Verrorhf}
\end{figure}
In this example, we learn the \ls\ using the parameters $\delta = 0.1$, $\patht = 2\delta^2 = 0.02$, $p = 10,000$, and $\Delta t = \patht/5$. Again we can map the drift and diffusion back to the original space and compare with the true simulator. Figure \ref{fig_rpdw} shows that the resulting drift is a homogenized version of the original system. The time scale the local simulator uses is $100$ times larger than that of the original system. This will result in long simulations being about $100$ times faster than using the original simulator.

Next we have run 10,000 long paths from the \ls\ with $\delta = 0.1$ shown above in figure \ref{fig_rpdw}. Figure \ref{fig_comp_dwwr} shows that again the distribution of paths is similar over multiple timescales, indicating that transition rates are preserved between states. 

\begin{figure}[tbp]
\centering\includegraphics[width=5.5in]{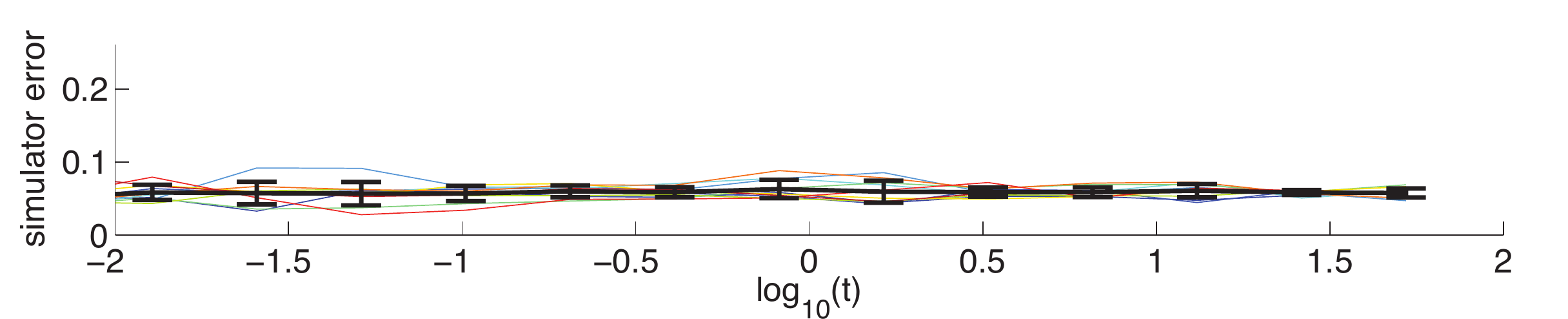}
\caption{Comparing true simulator with the \ls\ with $\delta=0.1$ on example \ref{ex_dwwr}.}\label{fig_comp_dwwr}
\label{f:simerrorhf}
\end{figure}

\begin{center}
\begin{figure}[tbp]
\centering\includegraphics[width=5.5in]{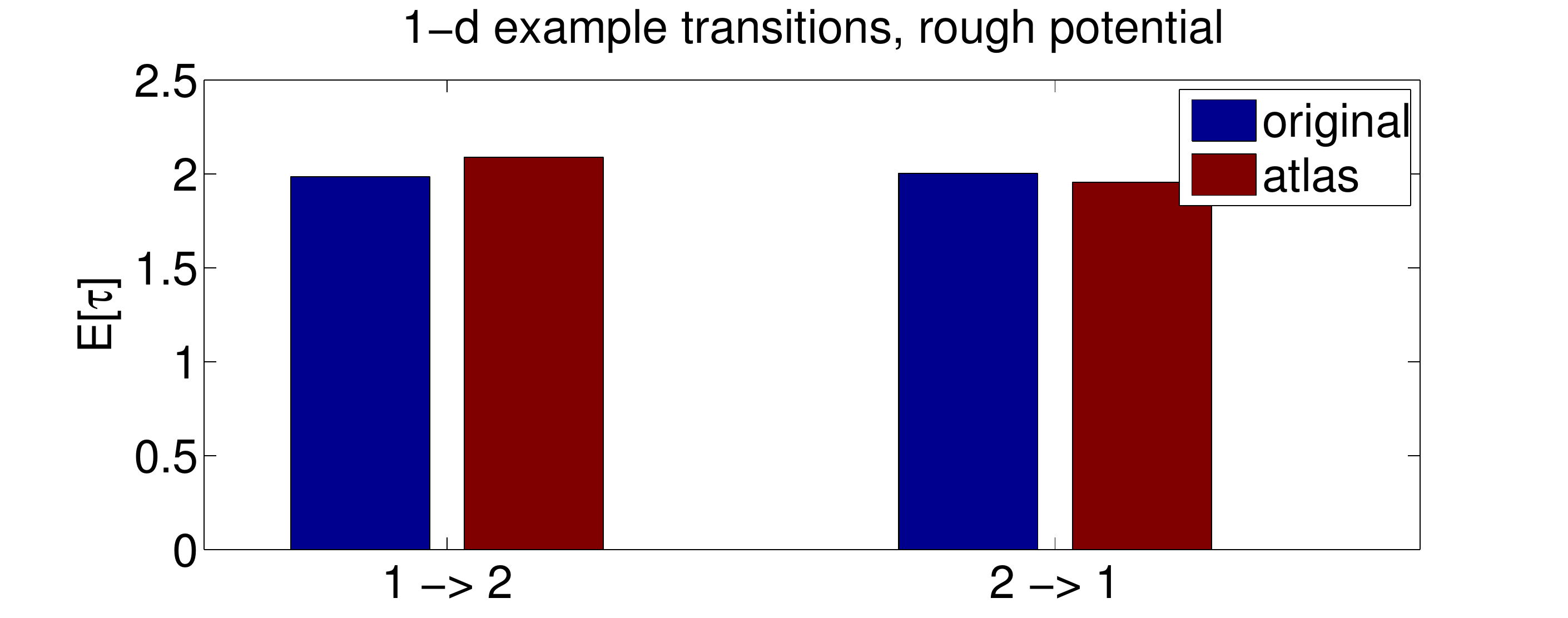}
\caption{Comparing transition times in example \ref{ex_dwwr}}\label{fig_rates_dwwr}
\end{figure}
\end{center}
In this example we only show results for $\delta = 0.1$ because that is the spatial scale where it makes sense to homogenize. For smaller values of $\delta$, the \ls\ becomes less stable as the estimated drift becomes less smooth. For larger values of $\delta$, the macroscale features of the system begin to wash out, and the two wells merge into one.

\subsection{Two Dimensional Example}
\subsubsection{Smooth Potential} \label{ex_twnr}
In this example, we consider the SDE $X_t$ in a 2-d potential well $U_2(x)$ shown below. 

\begin{align*}
dX_t &= -\nabla U_2(X_t) dt + dB_t 
\end{align*}
where
\begin{align*}
U_2(x) &= -\text{ln} \left( \text{exp}\left(\frac{-||x - p_1||^2}{c_1} \right) + \text{exp}\left(\frac{-||x - p_2||^2}{c_2} \right) + \text{exp}\left(\frac{-||x - p_3||^2}{c_3} \right) \right)\\
 p_1 &= \left[ \begin{array}{c} 0 \\ 0 \end{array} \right], \phantom{sp}  p_2 = \left[ \begin{array}{c} 1.5 \\ 0 \end{array}\right], \phantom{sp}  p_3 = \left[ \begin{array}{c} 0.8 \\ 1.05 \end{array} \right], \phantom{sp} c = \left[\frac{1}{5}, \frac{1}{5},\frac{1}{6} \right] \\
\end{align*}

The potential $U_2$ is chosen such that the stationary distribution is a mixture of Gaussians given by $\exp(-U_2/2)$. There are three clearly defined minima of $U_2$ close to $p_1,p_2,p_3$.  The parameters of the problem were chosen in such a way that the transition regions between wells lie on different level sets of the potential (see figure \ref{fig_exnet}). 
\begin{figure}[tbp]
\centering\includegraphics[width=5.5in]{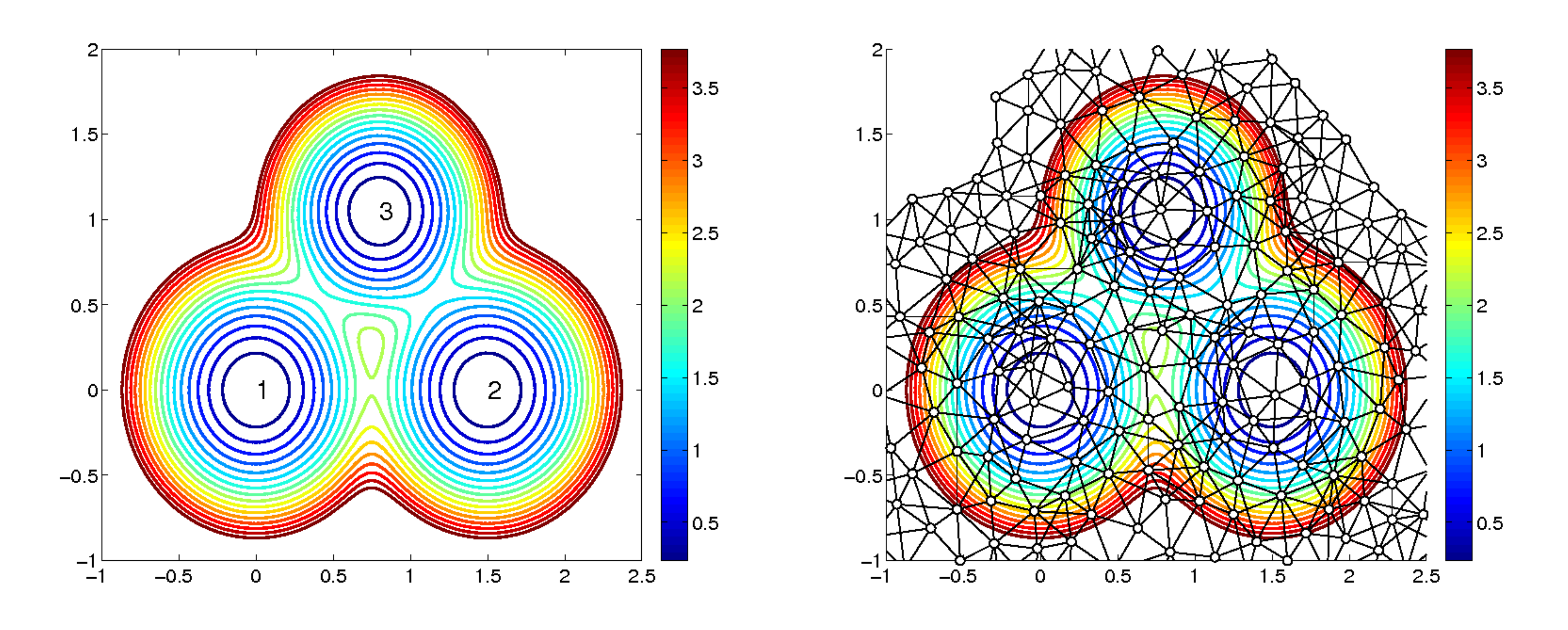}
\caption{Left: Potential for three well example. Right: $\delta=0.2$ net overlaid. Circles represent net points, black lines represent connections between net points.} \label{fig_exnet}
\end{figure}

\begin{figure}[tbp]
\centering\includegraphics[width=5.5in]{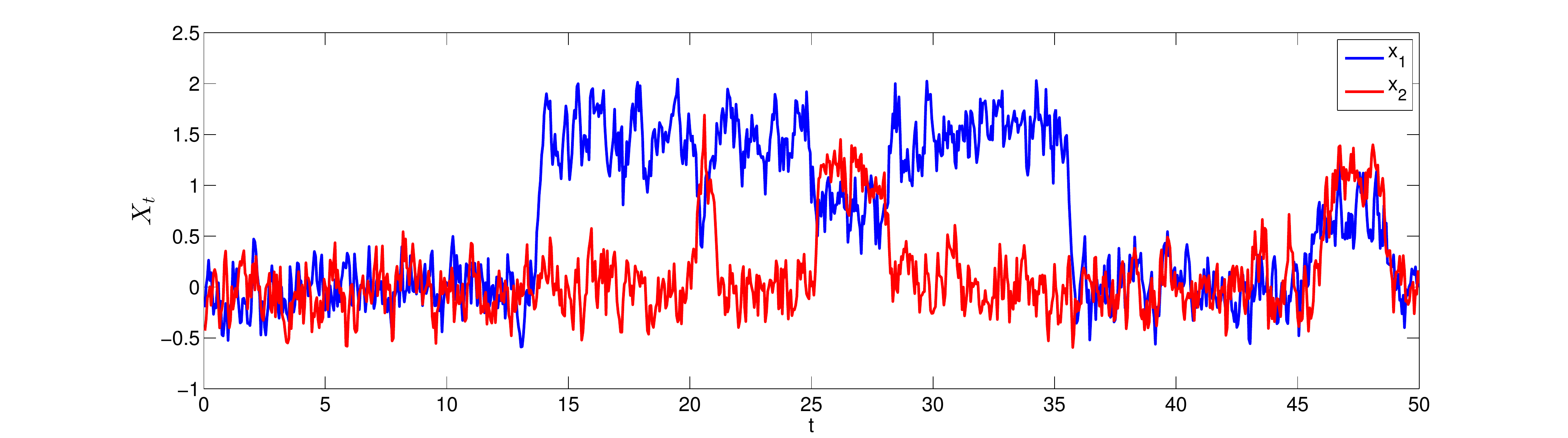}
\caption{Sample trajectory for three well example}\label{fig_twtraj}
\end{figure}
Figure \ref{fig_twtraj} shows a sample trajectory of the process $X_t$ using a simple Euler-Maruyama scheme with timestep 0.005. This is the simulator given to the \ls\ algorithm. The initial point set we use is a grid spaced by 0.01, discarding points with $U_2(x) \geq 10$. Figure \ref{fig_exnet} shows an example net for $\delta = 0.2$. 


When generating the \ls\ in this example, we use $p=10,000$, $\patht = \delta^2$, $\Delta t = \patht/5$. Again the number of landmarks does not matter since LMDS will return the exact result (up to machine precision) every time. Next for each of 10 randomly chosen starting locations we run 10,000 paths from each simulator. Then we compare them using a common coarse grained net with $\delta_c = 0.2$ as in section \ref{sec_simcomp}. The output is shown in figure \ref{fig_comp_twnr}. Again we notice that the errors are small for all times, including the range of timescales where transitions occur. 

In order to calculate transition times, we must first define the regions of interest. Region $i$ will be a ball of radius 1/4 around $p_i$. We will use these same regions for future examples stemming from this potential well. For a comparison of the transition times, see figure \ref{fig_rates_twnr}.

\begin{figure}[tbp]
\centering\includegraphics[width=5.5in]{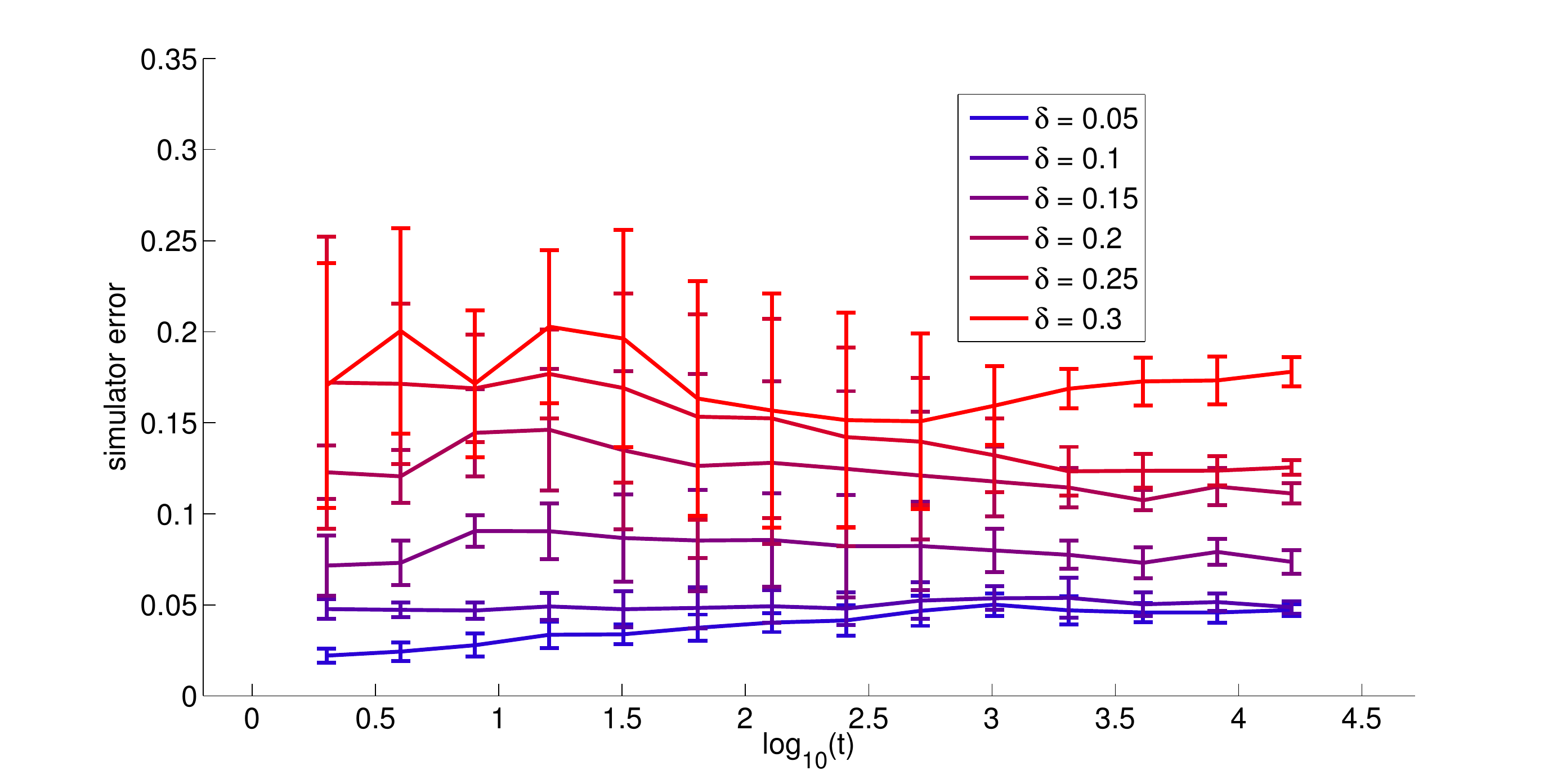}
\caption{Comparison of \ls's with original simulator in the smooth three well potential from example \ref{ex_twnr}.}\label{fig_comp_twnr}
\end{figure}

\begin{center}
\begin{figure}[tbp]
\centering\includegraphics[width=5.5in]{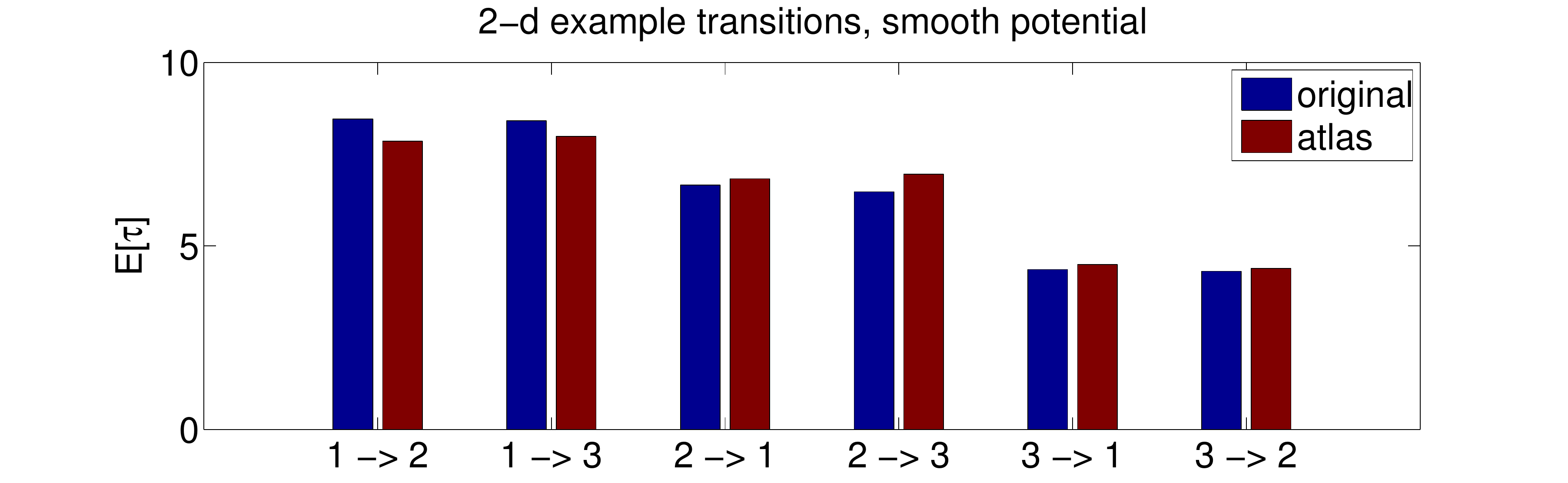}
\caption{Comparing transition times in example \ref{ex_twnr}}\label{fig_rates_twnr}
\end{figure}
\end{center}

\subsubsection{Rough Potential} \label{ex_twwr}
In the next example we take $U_2(x)$ and add a fast oscillating component to simulate small scale interactions as in example \ref{ex_dwwr}. The new potential well is 
\begin{align}
 V_2(x) = U_2(x) + \frac16\cos(100\pi x_1) + \frac16\cos(100\pi x_2).
\end{align}
And again see a simulator which approximates the process $X_t$.
\begin{align}
 dX_t = -\nabla V_2(X_t) dt + dB_t
\end{align}

As a result of the high frequency oscillations, the the timesteps will be of size 0.00005. This example will show that our algorithm is robust to fast oscillations of the potential even in a more complicated system. In this example we will again avoid using evenly spaced points as input, and run the grid points through the simulator for a short time $t=0.01$. These are samples we could obtain from running the original simulator for a long time, or using some kind of fast exploration technique. Again, the distribution of this point set is irrelevant as long as there are no holes of size $\delta$. 

For this system we will use $\delta = 0.2$ which will return $\delta$ nets with $\approx 230$ net points. We will again use use $p=10,000$, $\patht = \delta^2$, $\Delta t = \patht/5$ for consistency, even though $p$ could be chosen smaller (since $\delta$ is larger). Again, the timestep of the \ls\ is $\Delta t = 0.004$ which is over 100 times larger than the timesteps of the original simulator, and thus the \ls\ runs about 100 times faster. For the simulator comparison with this example see figure \ref{fig_comp_twwr}. Define the regions the same as in example \ref{ex_twnr}. To see the transition times, see figure \ref{fig_rates_twwr}.

\begin{figure}[tbp]
\centering\includegraphics[width=5.5in]{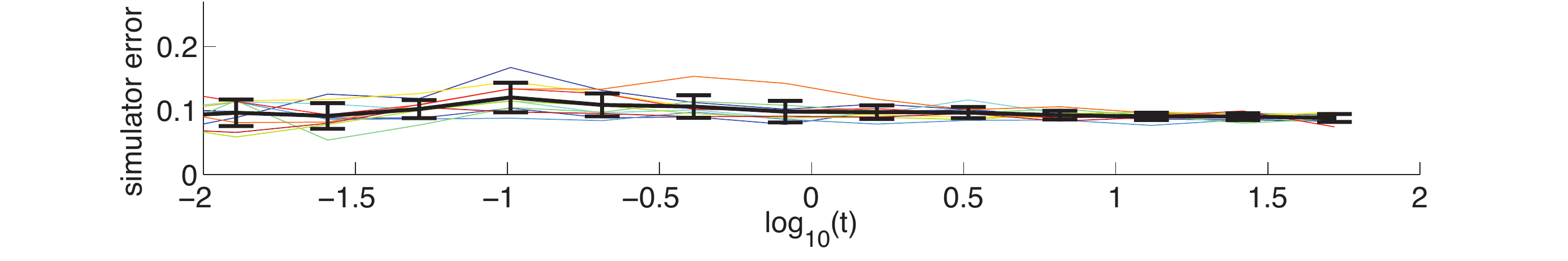}
\caption{Comparison of original simulator with the \ls\ ($\delta=0.2$) in the rough three well potential from example \ref{ex_twwr}.}\label{fig_comp_twwr}
\end{figure}

\begin{center}
\begin{figure}[tbp]
\centering\includegraphics[width=5.5in]{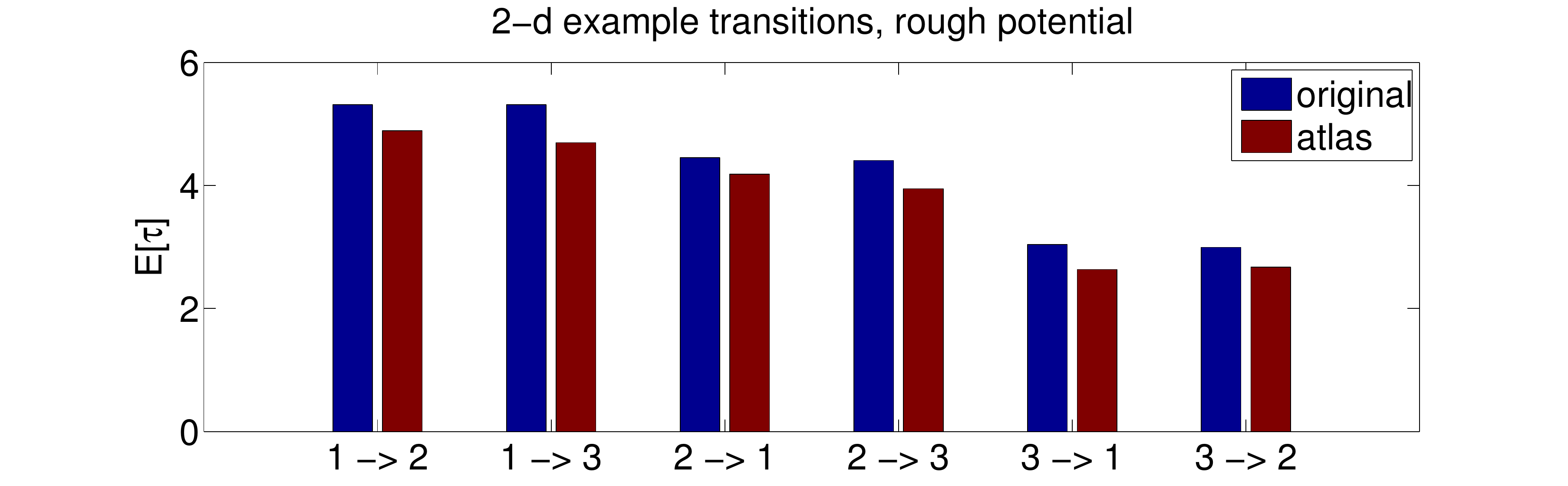}
\caption{Comparing transition times in example \ref{ex_twwr}}\label{fig_rates_twwr}
\end{figure}
\end{center}

\subsection{Random Walk on Images}\label{ex_twimg}
Next we will embed the two dimensional three well examples from sections \ref{ex_twnr} and \ref{ex_twwr} into $D=12,500$ dimensions. The high dimensional embedding is given by the following algorithm given a two dimensional point $x$:

\begin{enumerate}
 \item Generate a mesh $\{z_j\}$ on $[-1.5,3.5] \times [-1.5,2.5]$ with evenly spaced grid points and spacing 0.04.
 \item The output vector $v$ at position $j$ is 1 if $ |z_j - x| < 1/2 $ and 0 otherwise. 
\end{enumerate}

See figure \ref{fig_circimg} for an example image generated by this algorithm run on the point $(0,0)$. The natural distance to use in this space is the hamming distance, which counts the number of different entries. It induces a norm, which we call $||v||_1$ since this is the same as the 1-norm of the vector on $\R^D$. Given a binary vector $v$, we can write the ''inverse`` $\wt x$

\begin{align}
 \wt x = ||v||_1^{-1} \sum_j v_j z_j
\end{align}

This just averages the positions of the pixels $\{z_j\}$, which should roughly return the center of the circle in the image.  Any two dimensional simulator now can be mapped to a simulator on $\R^D$ in the following way:
\begin{enumerate}
 \item Given input $v \in \R^D$ and a time $\patht$, calculate the two dimensional point $\wt x$ from the approximate inverse mapping.
 \item Run the 2-d simulator for time $\patht$ with initial condition $x_0 = \wt x$.
 \item Take the output of the simulator, $X_{\patht}$ and map it to $\R^D$ with the high dimensional embedding.
\end{enumerate}

Next, we rescale the distance function by the constant $(0.04)^2/2$ so that the new norm is locally equivalent to the original distance. In so doing, we can continue using values of $\delta$ that made sense to us in the original space. This high dimensional mapping is nontrivial, and all the possible vectors $v$ we could see span the entire $12,500$ dimensional space. The space can be locally approximated by a 2-d plane for a ball of radius $r < 1/2$, and so we expect the \ls\ to find the appropriate local spaces to estimate the dynamics.  

\begin{figure}[tbp]
\begin{center}
\centering\includegraphics[width=2.5in]{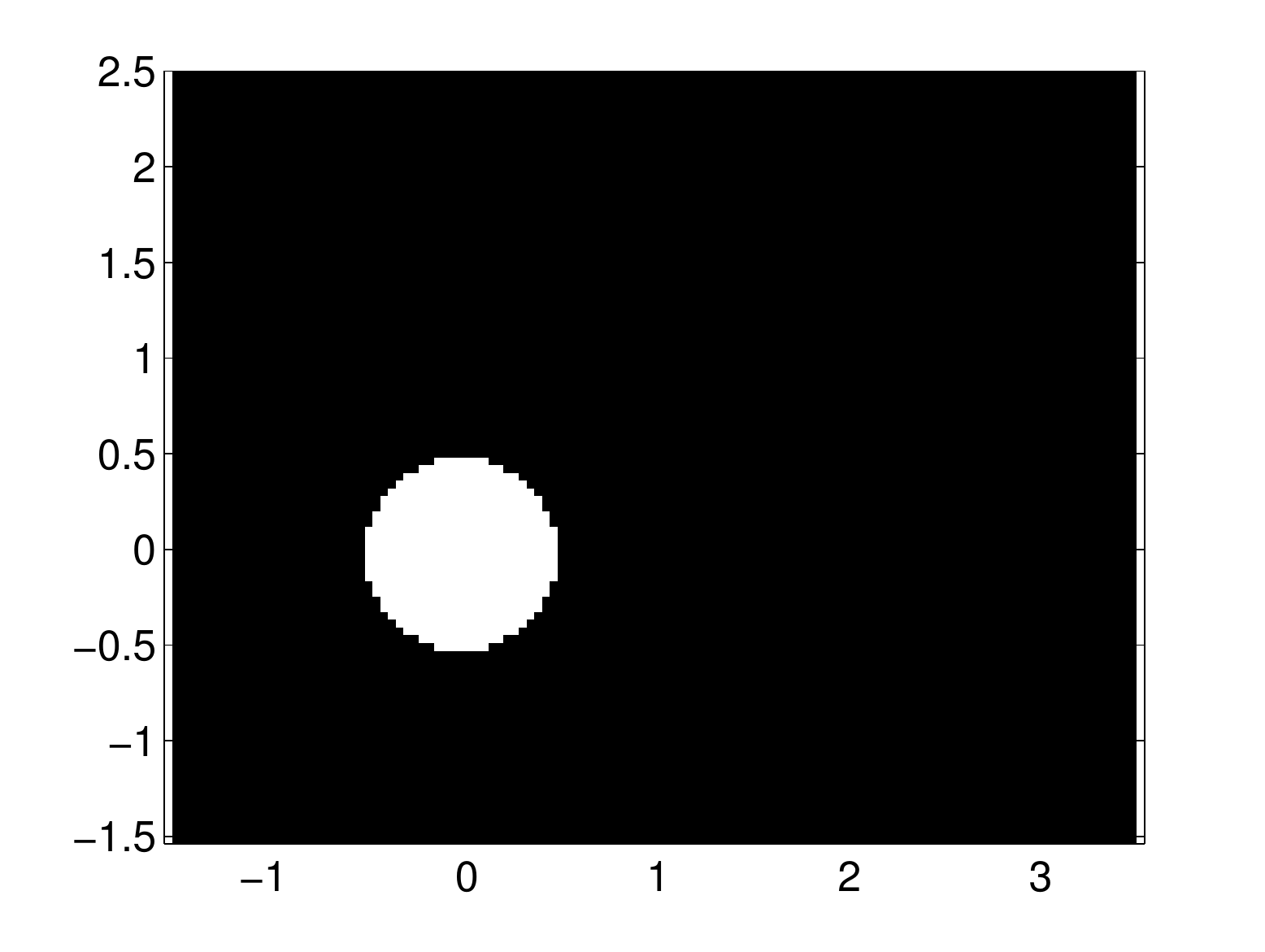}
\caption{Circle image corresponding to the point [0,0].}\label{fig_circimg}
\end{center}
\end{figure}

\subsubsection{Smooth Potential}\label{ex_twimg_nr}
First we will apply the high dimensional mapping to the simulator with smooth potential well $U_2$ from example \ref{ex_twnr}. Next we start with a set of points in $\R^D$ which cover the known state space (the same covering set from before only mapped to $\R^D$). The \ls\ algorithm is given the rescaled hamming distance function for computing distances between vectors, and it is given the simulator which takes points in $\R^D$ and a time $\patht$ and returns points in $\R^D$. Because distances are now $12,500$ times more expensive to compute, for this example we set $p = 1000$ and $m = 20$ landmarks per point. Again keep $\patht=\delta^2$ and $\Delta \patht = t/5$. 

\clearpage

\begin{center}
\begin{figure}[tbp]
\centering\includegraphics[width=5.5in]{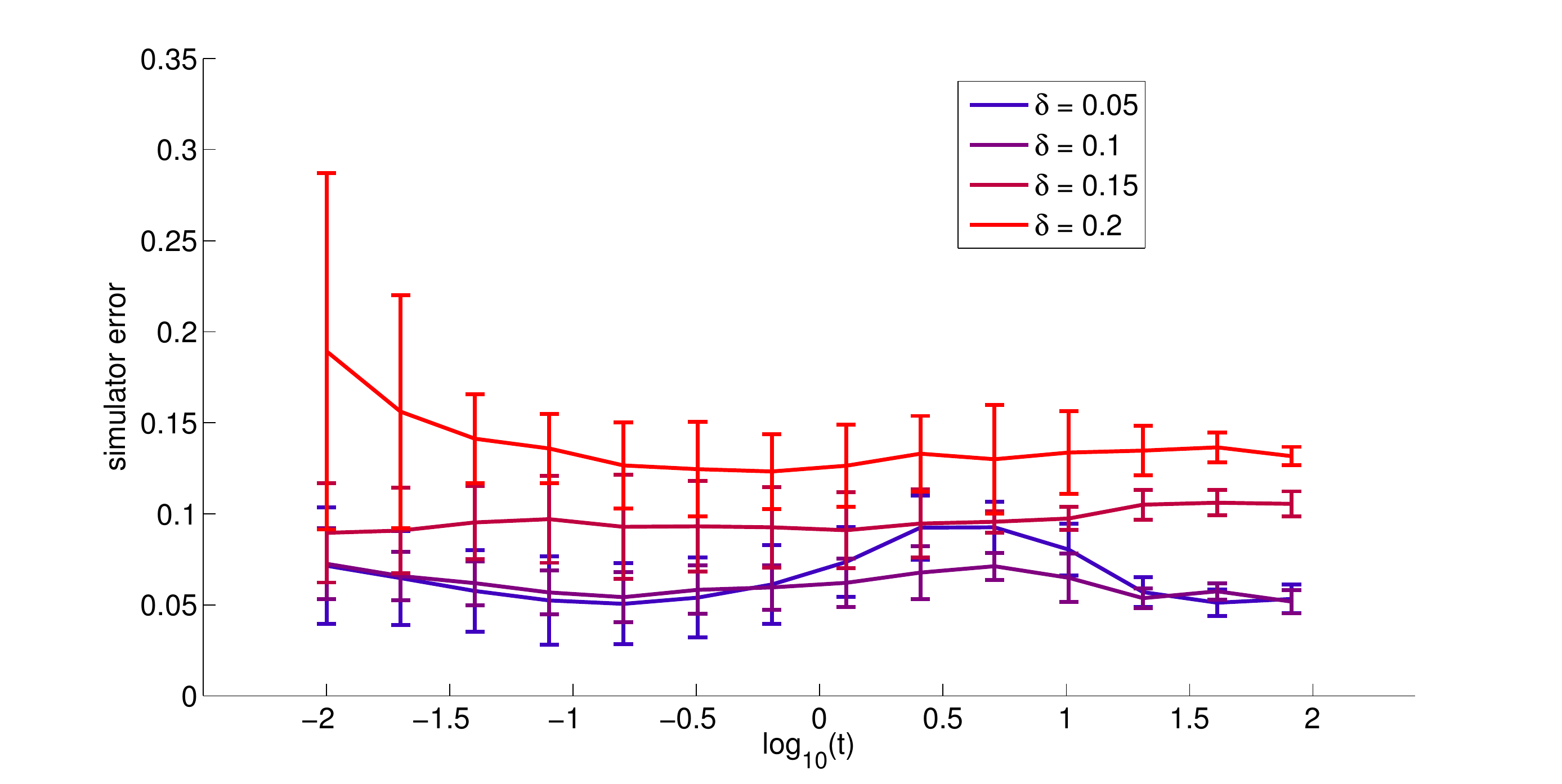}
\caption{Comparison of \ls's with original simulator for example \ref{ex_twimg_nr}.}\label{fig_comp_twimg_nr}
\end{figure}
\end{center}

After constructing multiple \ls's for varying values of $\delta$, we find that the distributions are well approximating the original given simulator. See figure \ref{fig_comp_twimg_nr} for details. The small number of samples, along with the width of the pixels limits the accuracy for small values of $\delta$. In fact we can see that $\delta = 0.05$ returns a simulator which is worse than $\delta = 0.1$.  Define the regions the same as in example \ref{ex_twnr}. To see the transition times, see figure \ref{fig_rates_imgtwnr}.

\begin{center}
\begin{figure}[tbp]
\centering\includegraphics[width=5.5in]{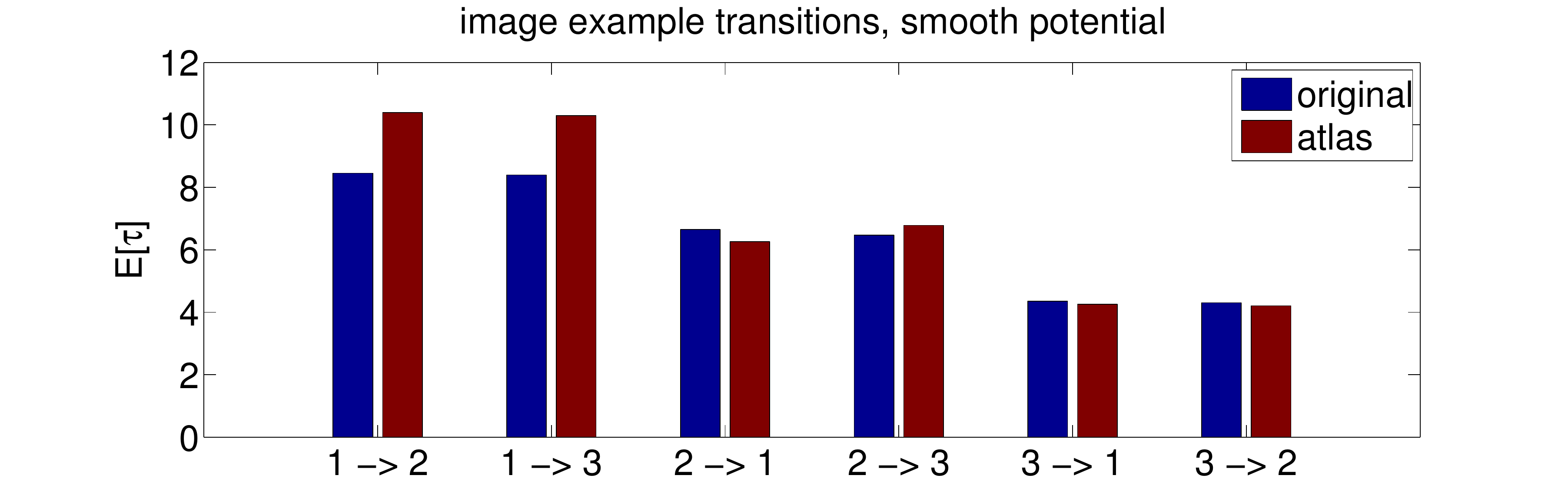}
\caption{Comparing transition times in example \ref{ex_twimg_nr}}\label{fig_rates_imgtwnr}
\end{figure}
\end{center}

\subsubsection{Rough Potential}\label{ex_twimg_wr}
In the next example of this paper, we will apply the high dimensional transformation to the rough potential well $V_2$ from example \ref{ex_twwr}. Again, we give the algorithm the same set of initial points from example \ref{ex_twwr} mapped to $\R^D$ along with the simulator using $V_2$ embedded in high dimensions. In this example we use $\delta = 0.2$, $p=2000$, $m=40$, $\patht=\delta^2$ and $\Delta t = \patht/5$. Again the simulation timescale of the local simulator is $100$ times larger than that of the original simulator. The \ls\ has a running time which depends only on the local dimensionality of the system, and so the ambient dimension only enters in the construction phase. 

\begin{center}

\begin{figure}[tbp]
\centering\includegraphics[width=5.5in]{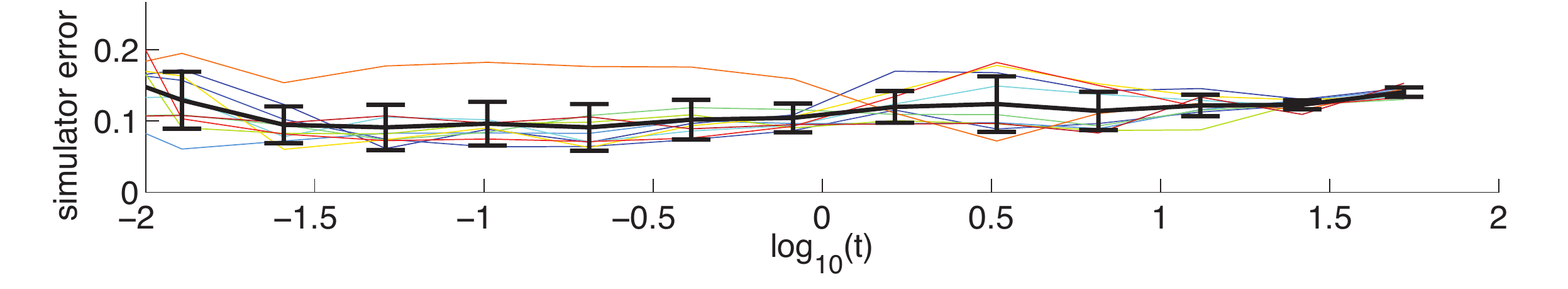}
\caption{Comparison of the \ls\ $\delta = 0.2$ with original simulator for example \ref{ex_twimg_wr}.}\label{fig_comp_twimg_wr}
\end{figure}
\end{center}
After simulating $10,000$ paths for each of $10$ different initial conditions, we can test the simulator error (see figure \ref{fig_comp_twimg_wr}). Because running the original simulator is very expensive for this system, we used the same original simulator samples (mapped to $\R^D$) for comparison as in figure \ref{fig_comp_twwr}. 
  Define the regions the same as in example \ref{ex_twnr}. To see the transition times, see figure \ref{fig_rates_imgtwwr}.

\begin{center}
\begin{figure}[tbp]
\centering\includegraphics[width=5.5in]{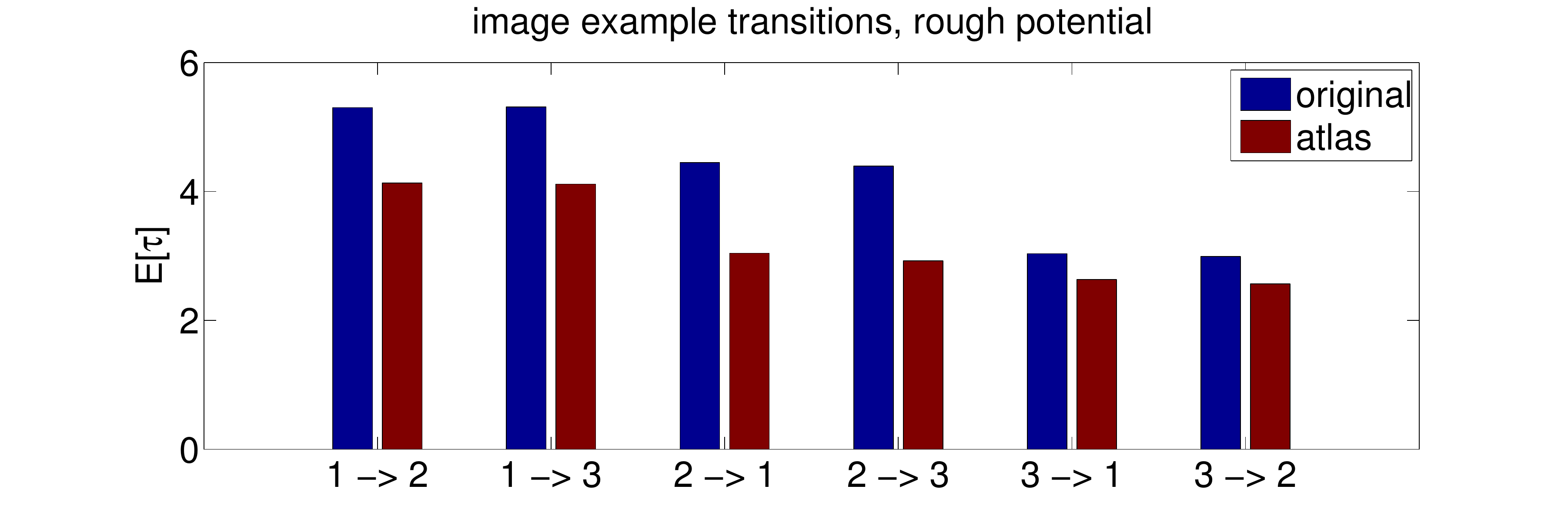}
\caption{Comparing transition times in example \ref{ex_twimg_wr}}\label{fig_rates_imgtwwr}
\end{figure}
\end{center}


\subsection{Randomly forced string}\label{ex_fcn}

\begin{figure}
\centering
    \textbf{Function Simulator}\par
\begin{minipage}{4in}
\begin{framed}
\begin{algorithmic}
 \item[] $f = \osim(f)$
 \item[] 
 \item[] \hspace{.1in} $\%$ \emph{simulate Brownian bridge}
 \item[] \hspace{.1in} $W = $ cumsum(randn(1,100)) 	
 \item[] \hspace{.1in} $W = W - W(1)$ 		
 \item[] \hspace{.1in} $W = W - x*W(100)$		
 \item[] 
 \item[] \hspace{.1in} $\%$ \emph{Add bridge to f, smooth and renormalize}
 \item[] \hspace{.1in} $f = f + (1/100)*W$ 		
 \item[] \hspace{.1in} $f = $ smooth$(f)$		
 \item[] \hspace{.1in} $f = f*(f_{\text{norm}}/\text{norm}(f))$	
\end{algorithmic}
\end{framed}
\end{minipage}
\caption{Pseudocode for a single step of the simulator used in example \ref{ex_fcn}. $f_{\text{norm}}$ is a fixed constant equal to the norm of sin($\pi x$). The function smooth is MATLAB's default smoothing algorithm.}
\label{f:fcn_sim}
\end{figure}

\begin{center}

\begin{figure}[tbp]
\centering\includegraphics[width=5.5in]{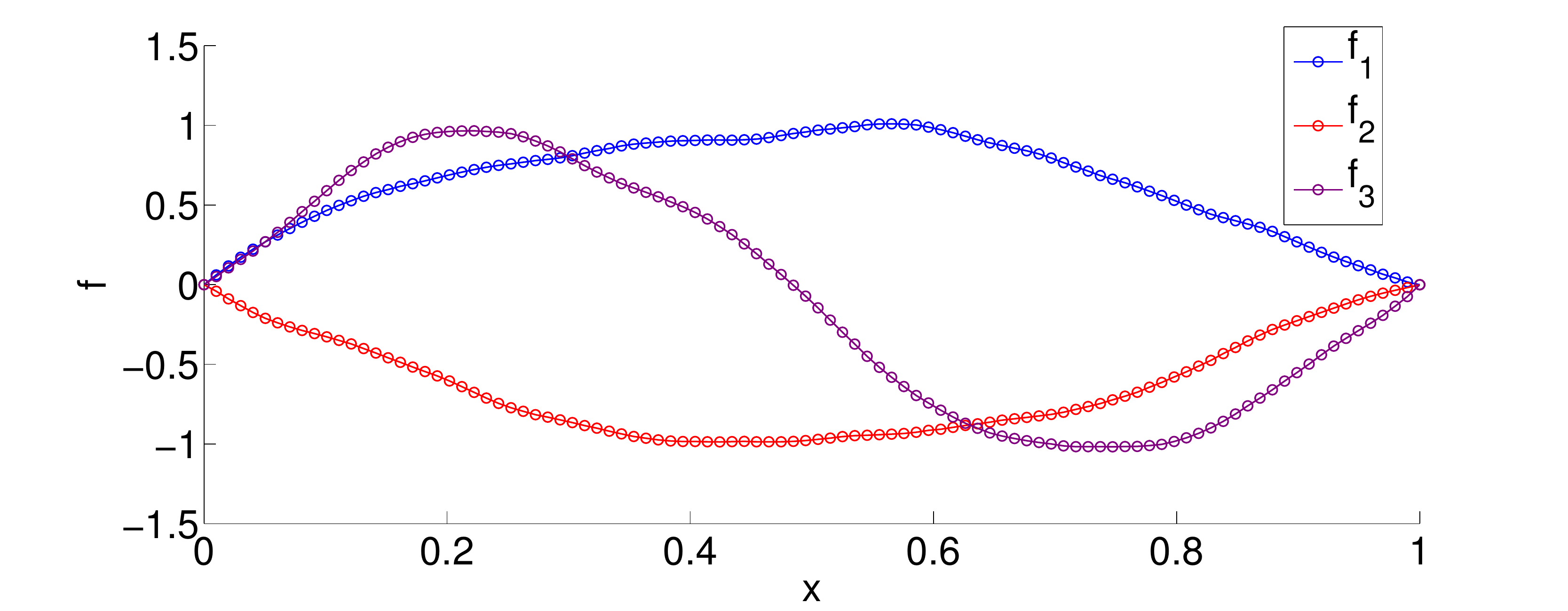}
\caption{Three typical outputs of the simulator from section \ref{ex_fcn}.}\label{fig_functions}
\end{figure}
\end{center}

In this example, we are given a dynamical system in the form of a random walk on functions on $[0,1]$ with endpoints fixed at zero. These functions are represented by values on a grid of 100 evenly spaced points (including the ends). Typical functions seen as output from the simulator are shown in figure \ref{fig_functions}. The distance we will use is euclidean distance in $\R^{100}$, rescaled by 1/100 to approximate the $L^2$ distance on functions. A single step of the simulator is done by adding a Brownian path fixed at the endpoints, then smoothing the result and renormalizing. The pseudocode is shown in figure \ref{f:fcn_sim}.

This behavior of this system in characterized by large dwelling times near the smoothest functions ($f_1$ and $f_2$ from figure \ref{fig_functions}) with rare transitions ($10^3 - 10^4$ steps) across functions like that of $f_3$ in figure \ref{fig_functions}. The three constraints $f(0)=0, f(1)=0, ||f|| = ||f_0||$ force the functions to live on $S^{97}$, a 97 dimensional sphere with radius $||f_0||$. Although we expect these functions to lie near a low dimensional submanifold $\M \subset S^{97}$ because of the smoothing step, a single step of the simulator could take us anywhere on $S^{97}$; this means the outputs of our simulator are never exactly on $\M$. This is an important aspect of this example, as real world data typically will have small noise in the ambient space.

One can think of this simulator as a discretization of the SDE on $S^{97}$
\begin{align}
 dX_t = F(X_t)dt + \sigma(X_t) dW_t \label{eqn_fsde}
\end{align}
For an appropriate choice of $F,\sigma$. One can also think of this as a discretization scheme for a stochastic partial differential equation (SPDE) of the form
\begin{align}
 \frac{\partial}{\partial t} f_t = \frac{\partial^2}{\partial x^2} f_t + b(f_t) + \sum_{j=1}^\infty g_j(f_t) dW^j_t \label{eqn_spde}
\end{align}
for an appropriate choice of drift $b$ and orthogonal functions $\{g_j\}$. One can think of \eqref{eqn_spde} as an infinite dimensional analogue to \eqref{eqn_fsde} with each coordinate $X^j_t = \langle f_t, g_j \rangle$ being driven by a one dimensional brownian motion. 

In order to generate the \ls, first we must generate an initial sampling of the space. In order to do this, we start with $50,000$ renormalized Gaussian vectors, the uniform distribution on $S^{97}$. Next we want to ''heal`` these samples by running them through the simulator. One can see with some observation that 250 steps is large enough that the noise is killed; samples with 250 steps of ``healing'' are similar to those with 500 steps of ``healing''.

Next we wish to select parameters $\delta, \patht$. We expect that the system may be homogenized at a time scale of $\patht = 250$ steps for the following reasons: $\patht$ is an order of magnitude below the scale of major events of the system, $\patht$ is an order of magnitude above the scale of the noise (since even the most noisy inputs have been smoothed by time $\patht$). The parameter $\delta$ is closely tied to the choice of $\patht$. We measure the average distance moved by paths of length $\patht$ starting from our healed samples to be $0.3 = \delta$. Next we choose the minor parameters $p,m,d$. In these experiments, we use $p=5000$ and $m=40$. As discussed in section \ref{sec_LMDS}, we can choose $d$ based upon the singular values obtained through LMDS. Choosing a cutoff of $(\delta/4)^2$ for the eigenvalues yields $d=3$ over $99\%$ of the time. Using $d=3$ and comparing with the original simulator in the usual way yields figure \ref{fig_comp_fcn}. 
\begin{center}
\begin{figure}[tbp]
\centering\includegraphics[width=5.5in]{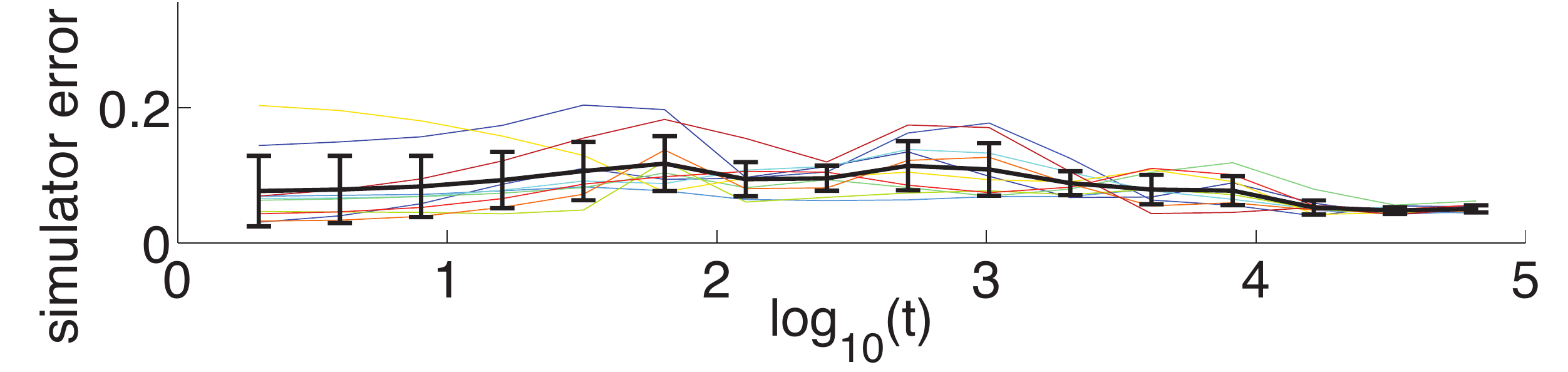}
\caption{Comparison of the \ls\ with original simulator for example \ref{ex_fcn}.}\label{fig_comp_fcn}
\end{figure}
\end{center}
In general, it is better to overestimate $d$ than underestimate; underestimating $d$ may lose important degrees of freedom causing failure, while overestimating $d$ will only affect the computational cost mildly. In fact the algorithm is robust to the choice of $d$, provided $d$ is large enough to capture the important degrees of freedom.  See figure \ref{fig_comp_fcn_dim} to see results for varying values of the choice of $d$. 

\begin{center}
\begin{figure}[tbp]
\centering\includegraphics[width=5.5in]{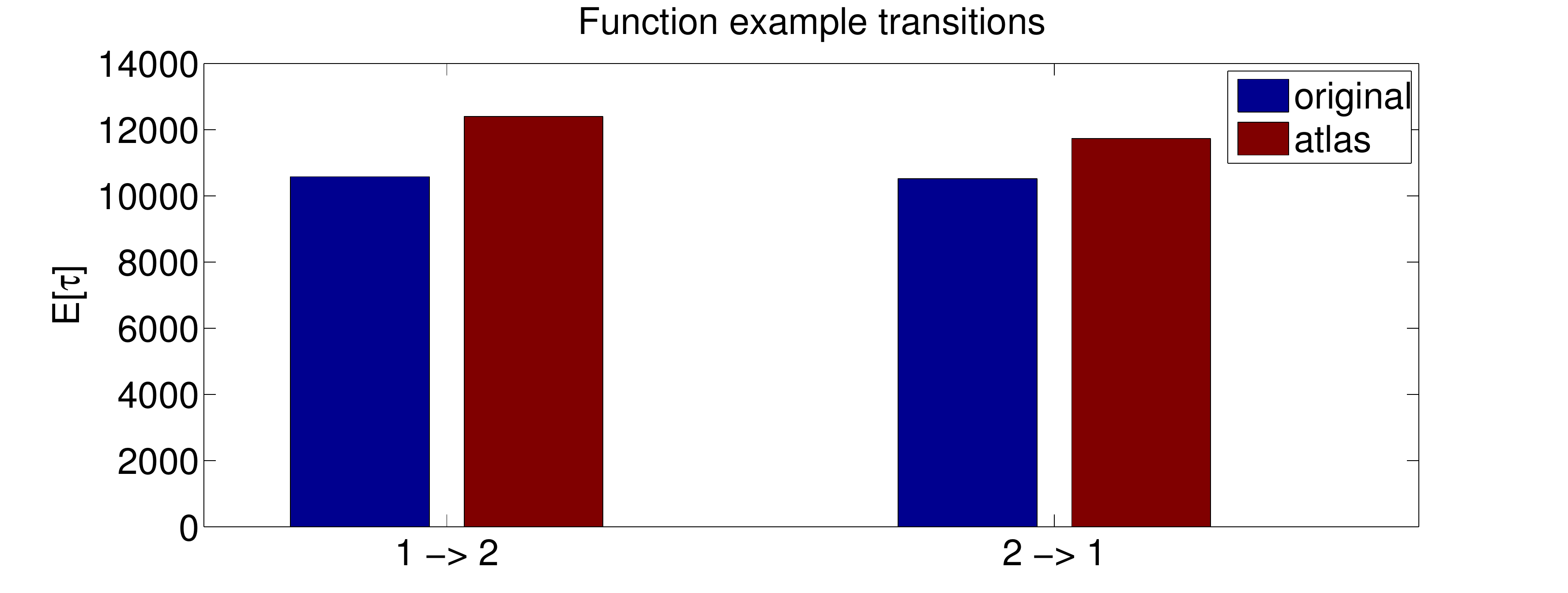}
\caption{Comparing transition times in example \ref{ex_fcn}}\label{fig_rates_fcn}
\end{figure}
\end{center}

Next we wish to compare the transition times between states. In order to do this, we define region 1 to be a ball of radius 1/4 around sin($\pi x$), and region 2 to be a ball of radius 1/4 around -sin($\pi x$).  To see a comparison of transition times between these regions, see figure \ref{fig_rates_fcn}.

\begin{center}
\begin{figure}[tbp]
\centering\includegraphics[width=5.5in]{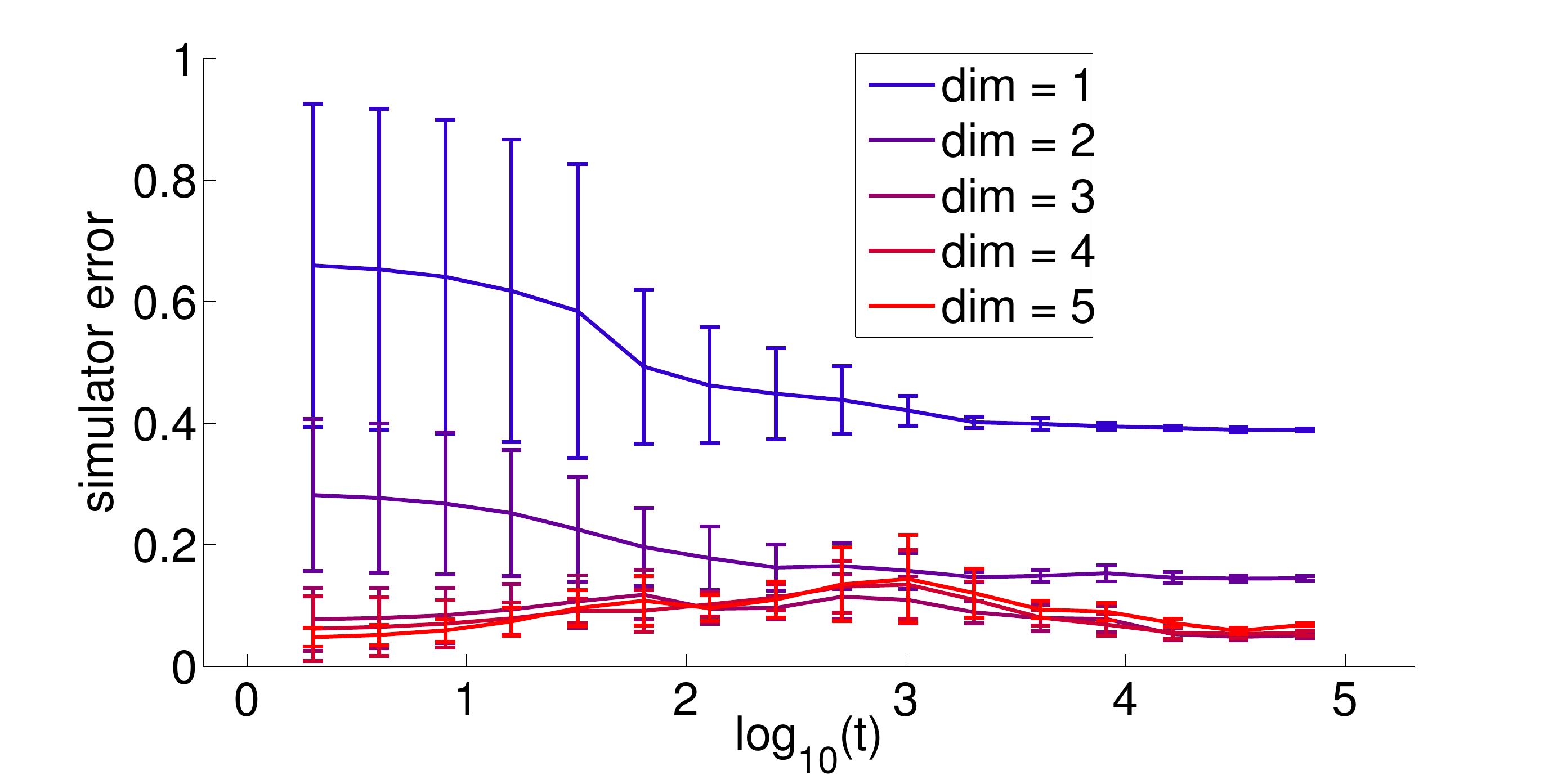}
\caption{Comparison for example \ref{ex_fcn} varying $d$, the dimension of the \ls.}\label{fig_comp_fcn_dim}
\end{figure}
\end{center}

The \ls\ constructed for this example again captures the important aspects of the original simulator. The \ls\ is again faster in this example due to two factors: decreased dimensionality and increased timestep. The dimensionality of the \ls\ is 3 as compared to the original 100, and the timestep of the simulator is equivalent to 50 of the original steps (250/5 since $\Delta t = \patht/5$).

\subsection{Chaotic ODE system} \label{ex_ode}
In many real world systems, noise arises from ensembles of deterministic chaotic processes. In this example we apply our algorithm to a multiscale ODE driven by small scale deterministic chaos. Consider the set of multiscale ODEs with a scale parameter $\eps$:
\begin{equation}
	\begin{cases}
		\dot{ X }_{t}^{\eps} = \eps f(X_t^\eps) + g(Y_t), & X_0^\eps = x\\
		\dot{Y}_t = h(Y_t) & Y_0 = y \label{eqn_ODE}
	\end{cases}
\end{equation}
Systems of this form (although slightly more general) are studied in \cite{vanden2003fast}. Suppose the dynamics for $Y_t$ alone have an invariant measure $\mu$, and $\E_\mu[f] = \Ord(\eps)$. Then the system \eqref{eqn_ODE} behaves like the SDE

\begin{equation}
 dX_s = b(X_s)ds + \sigma(X_s)dB_s
\end{equation}
on the timescale $s=\eps t$ in the limit as $\eps \rightarrow 0$. For fixed $\eps$, such systems are difficult to simulate directly due to the timescale separation. 

We start by choosing functions $f,g,h$ and scale parameter $\eps$. Start by choosing $Y_t$ to be the Lorenz '96 system with 80 dimensions and $F= 8$ (thus fixing $h$). Each coordinate $Y_i(t)$ is governed by equation \ref{eqn_lorenz}, where indices wrap around (so $Y_{-1} = Y_{79}, Y_0 = Y_{80}, Y_{81} = Y_1$). 
\begin{equation}
\dot{Y}_i = -Y_{i-2}Y_{i-1} + Y_{i-1}Y_{i+1} - Y_i + F\label{eqn_lorenz}
\end{equation}

We fix $\eps = 0.01$ and let $f(X_t)$ be the cartesian coordinate version of the system in equations \ref{eqn_radsystem1},\ref{eqn_radsystem2}:
\begin{align}
\dot{r} &= -(r - 3/4)(r - 3/2)(r - 2) \label{eqn_radsystem1} \\
\dot{\theta} &= r - 3/2\label{eqn_radsystem2}
\end{align}

Last, we choose $g = [g_1(y), g_2(y)]$ to be
\begin{align}
 g_1(y) = \frac{1}{320}\sum_{i \in I_1}y_i - 0.2925 \quad,\quad
 g_2(y) = \frac{1}{320}\sum_{i \in I_2}y_i - 0.2925 
\end{align}
with $I_1 = [1:10, 21:30, 41:50, 61:70]$ and $I_2 = I_1^c$. This choice of $g$ was made in order that $g$ is approximately mean zero and variance 1 with respect to the measure $\mu$. The system is then solved using the Runge-Kutta method with timestep 0.05. Last, we multiply $Y_t$ by a small constant $10^{-4}$ in a post processing stage in order that these directions do not overpower the interesting states of the system (the first two coordinates). We use $10^{-4}$ since a typical value of each coordinate of $Y_t \approx 10$, there are $80$ variables, so this ensures the norm of $Y_t$ is lower than $\eps$. The system then behaves like
\begin{equation}
 d\bar{X}_s = f(\bar{X}_s) + dB_s \label{eqn_limitingsde}
\end{equation}
on the timescale $s = \eps t$. The ODE system associated with $f$ (i.e. without the stochastic term $dB_s$) has two stable attracting limit cycles, one at $r = 3/4$ and the other at $r = 2$. A slope field for $f$ is shown, along with a sample path of $X_t$, in figure \ref{fig_odepath}.

\begin{center}
\begin{figure}[tbp]
\centering
\centering\includegraphics[width=5.5in]{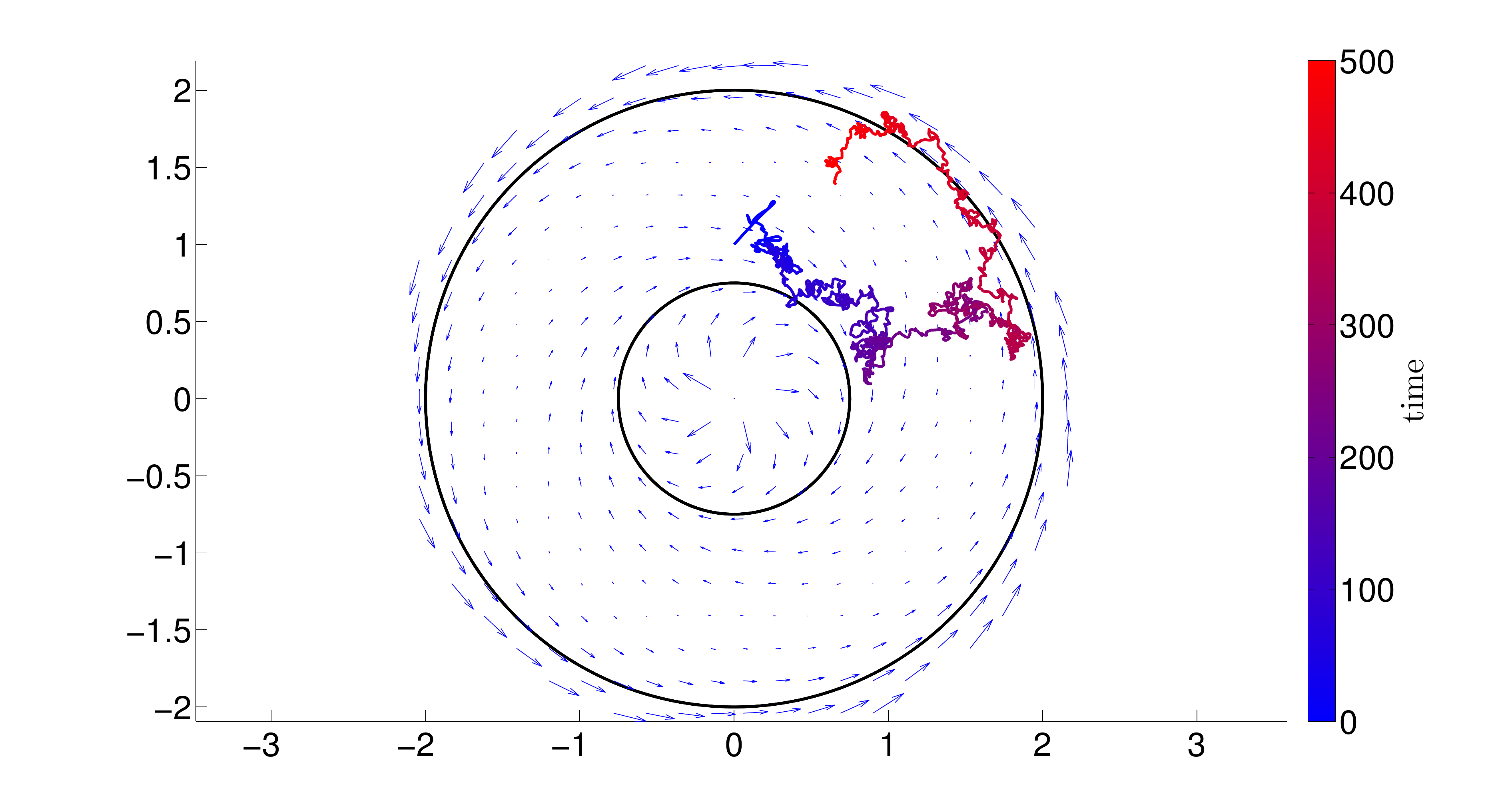}
\caption{Colored path: example simulation for example \ref{ex_ode}. Black circles represent the stable limit cycles, and blue vectors represent $f$ at that location.}\label{fig_odepath}
\end{figure}
\end{center}

We cannot apply directly our algorithm to the above, since multiple runs will yield the same result (and have covariance zero). For this reason we will add small noise ($10^{-5}$ times a random normal) to the initial condition we give as input, and pretend our simulator is of the form \eqref{eqn_limitingsde}. Because of the chaos in the system, this small perturbation propagates quickly through the system, yielding us a different ''realization`` of the chaos $Y_t$.

\begin{center}
\begin{figure}[tbp]
\centering
\centering\includegraphics[width=5.5in]{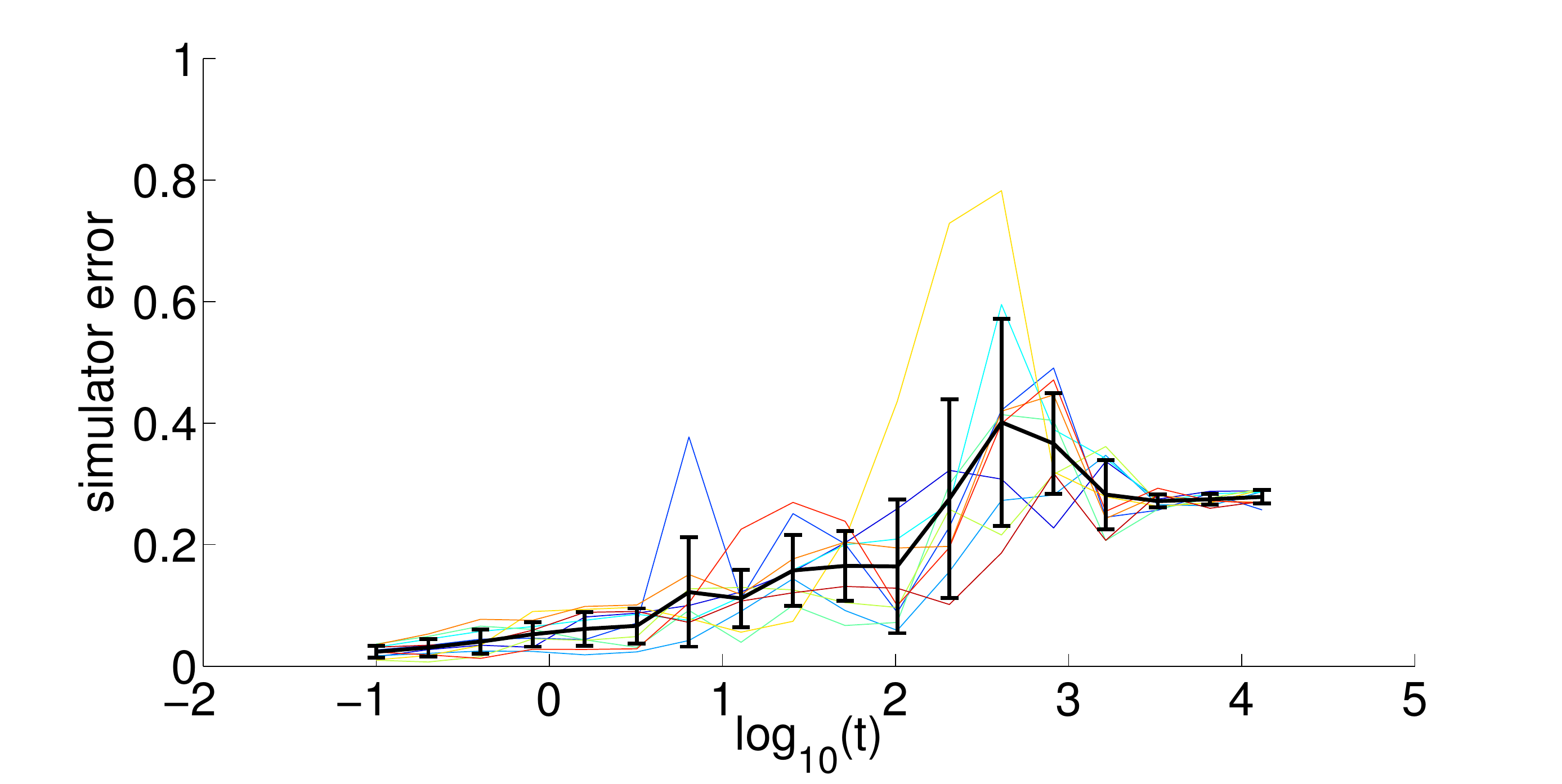}
\caption{Comparison of the \ls\ with original simulator for example \ref{ex_ode}.}\label{fig_comp_ode}
\end{figure}
\end{center}

\begin{center}
\begin{figure}[tbp]
\centering
\centering\includegraphics[width=5.5in]{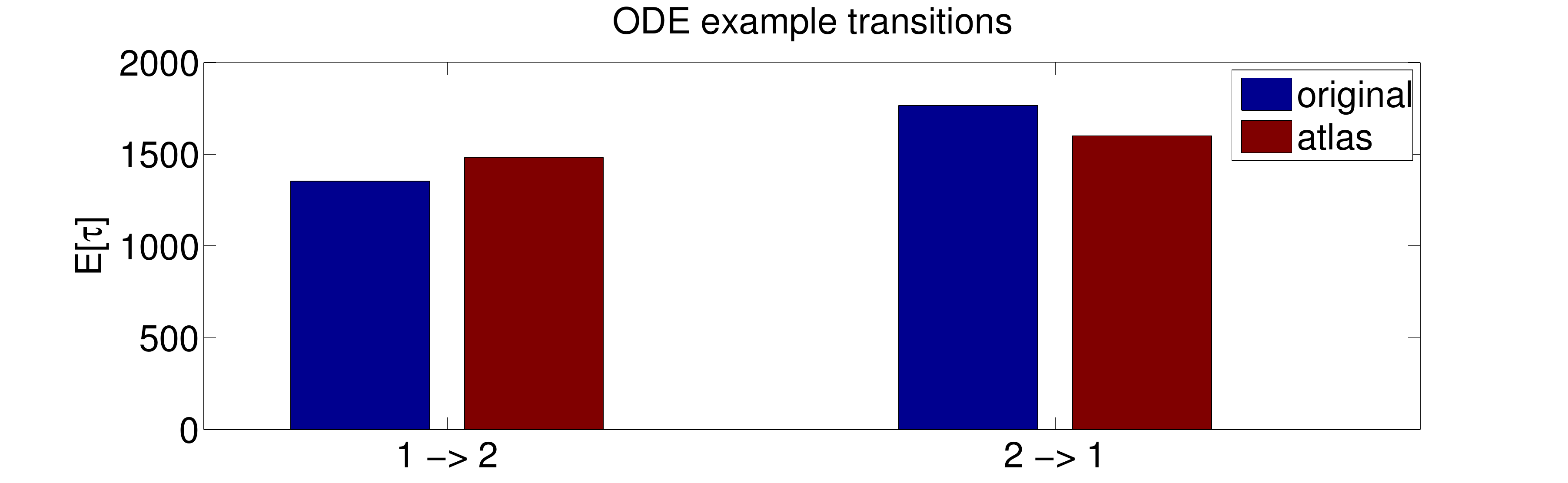}
\caption{Comparing transition times in example \ref{ex_ode}}\label{fig_rates_ode}
\end{figure}
\end{center}

Next we need to generate a large set of input points for the $\delta$-net. In this case we use 82 dimensional random normal vectors with the last 80 directions multiplied by $10^{-4}$ (in order to be similar in size to typical outputs of the simulator). We run these random samples through the simulator for a short time in order to ''heal`` them so that the points given to us are actual outputs of the true simulator we are given. Next we must choose $t_0 = \Ord(\eps^{-1})$, since this is the timescale on which the SDE dynamics occur. The exact value we use is $t_0 = 25 = (1/4)\eps^{-1}$. In this amount of time, the system travels roughly $\delta = 0.3$. The \ls\ we construct compared with the true simulator is shown in figure \ref{fig_comp_ode}. Here we choose region 1 to be everywhere $r < 1$, and region 2 to be everywhere $r > 7/4$. To see a comparison of the transition times between these regions see figure \ref{fig_rates_ode}. Notice that the rates of transition between states are accurate, but the comparison is not very accurate for intermediate times. This is because the speed at which the system travels around the limit cycles is not well approximated.


\section{
Extensions and Discussion
} \label{sec_disc}

There are many open problems related to this work, some of which we mention here.

Theorem \ref{thm_main} reveals that the local learning algorithm works well on compact SDEs with Lipschitz drift and diffusion. 
We consider only bounded domains in the proof to make thing simpler, although the same framework can be applied to the unbounded case with tight transition density. In this case, one has to worry about parts of the space which are unexplored, but seldomly reached. Indeed we see that some of our examples have unbounded state spaces, and the algorithm performs as desired.

The framework we introduced may be generalized to richer families of local simulators, enabling the approximation of larger classes of stochastic systems. Proving large time accuracy may be difficult for such systems, so it is an open problem how much one is allowed to change these local simulators.
Many molecular dynamics (MD) systems remember the velocity of atoms and so do not follow an SDE of the form \eqref{eqn_Ydef} which is memoryless. 
A subject of ongoing research is to use more complex models locally to be able to capture dynamics of typical MD systems.

Another subject of future work is efficient computation of the function $G$, which is the inverse MDS mapping. One can always approximate this function up to order $\delta$ via a piecewise constant function (returning the chart center). In some cases, such as when $\metric$ is the root mean square distance (RMSD), it is possible to create an inverse mapping which has error of order $\delta^2$ via local linear approximations, using ideas from \cite{CM:MGM2}.

Using the \ls\ as a basis for generating samples from the stationary distribution is useful for quickly computing diffusion maps for these systems. A subject of interest is to understand how the errors made by the \ls\ propagate through diffusion maps. How similar do diffusion maps look generated by samples from the \ls\ as compared to diffusion maps generated directly from the original simulator?

In some problems, choosing $\delta$ and $t_0$ is difficult.
Another subject of ongoing research is a robust way of choosing these parameters based on short simulations.
For simplicity in this paper we have assumed that $\delta$ and $t_0$ is constant for each $k \in \Gamma$, but it is possible to have these parameters depend on the location $y_k$ (and perhaps statistics of short sample paths). 

Last but not least, this construction as described here still requires a large number of steps to sample rare events and reach stationarity, i.e. it does not address the problem of accelerating the sampling of rare events or overcoming energy barriers. In many important applications, e.g. molecular dynamics, such barriers force the simulations to be extremely long (e.g. $10^{12}-10^{14}$ time-steps is common). The point of this work is to produce a simulator that is much faster (in real world time) than the original fine scale simulator. It is important to note that any of the many techniques developed over the years to attempt to overcome this problem may be used in conjunction with our construction, i.e. it can be run on our \ls, instead of the original expensive fine scale simulator. This yields a double gain in simulation speed, combining the gains of a faster simulator with those of an importance sampler that efficiently samples rare events.

\section{
Acknowledgement
}
The authors gratefully acknowledge partial support from NSF CAREER DMS-0847388, NSF CHE-1265920 and ONR N00014-12-1-0601.
We thank J. Mattingly for useful discussions, and Y. Kevrekidis for introducing us to the questions around model reduction and equation free methods, and many discussions on these topics over the years.
Finally, we would like to thank the anonymous reviewer, whose comments on the first version of this manuscript greatly helped us improve the presentation and readability of this work.

\begin{figure}
\centering
    \textbf{Notation and Definitions}\par
\begin{framed}

\textbf{Notation:}
\begin{itemize}
 \item For $f,g$ functions on the same domain, we say that $f=\Ord(g)$ if there is a constant $C>0$ such that for all $x$ in the domain of $f$ and $g$ we have $f(x)\le Cg(x)$.
 \item If $F:M \rightarrow N$ is a measurable function and $\mu$ is a measure on $M$ then we define the push forward measure of $\mu$ through $F$ for any $A\subset N$ by $F_*\mu(A) = \int_M \mathbbm{1}_A(F(x))d\mu(x)$.
\end{itemize}

\textbf{Definitions:}
\begin{itemize}
 \item $\Gamma = \{y_k\}$: Set of points in the net.
 \item $i \sim j$: Neighbor connections on net indices \eqref{sec_dnet}.
 \item $\MDS_i$: Mapping created by LMDS associated with chart $i$, see section \ref{sec_LMDS}.
 \item $\csp$: Atlas constituted by the collection of $\{(x,i) | x \in \R^d, |x|<2\delta, i \in \Gamma\}$.
 \item $S_{i,j}$: Chart transition map $S_{i,j}(x) = (x - \mu_{i,j})T_{i,j} + \mu_{j,i}$, see equation \eqref{eqn_switchingmap}.
 \item $i'$: Shorthand denoting the new chart index chosen starting from $(x,i) \in \csp$, see equation \eqref{eqn_ip}.
 \item $W$: Wall function which keeps the \ls\ within $2\delta$ of the chart center, see equation \eqref{e:W}.
 \item $q$: density (with respect to volume measure on $\mathcal{M}$) of the stationary distribution for the original process $Y_t$. 
 \item $\mu$: stationary distribution of the \ls\ process $Z_k$.
 \item $\wh q$: 'almost stationary' distribution for $\wh X_t$ (see \eqref{eqn_qhatdef}).
 \item $C^k, ||\cdot||_{C^k}$: H\"older space of order $k$.
\end{itemize}

\end{framed}
\caption{List of notation and definitions used for reference}
\label{sec_notdef}
\end{figure}

\bibliography{LLDS}
\bibliographystyle{plain}

\end{document}